\newcommand{\id}{{\rm Id}}
\newcommand{\tc}{{\rm TC}}
\newcommand{\cd}{{\rm cd}}
\newcommand{\cat}{{\rm cat}}
\newcommand{\secat}{{\rm secat}}
\newcommand{\SO}{\mathrm{SO}}
\newcommand{\U}{\mathrm{U}}
\newcommand{\SU}{\mathrm{SU}}
\newcommand{\Sp}{\mathrm{Sp}}
\newcommand{\Spin}{\mathrm{Spin}}
\newcommand{\PU}{\mathrm{PU}}
\newcommand{\PO}{\mathrm{PO}}
\newcommand{\GL}{\mathrm{GL}}
\newcommand{\NN}{\mathbb{N}}
\newcommand{\ZZ}{\mathbb{Z}}
\newcommand{\FF}{\mathbb{F}}
\newcommand{\CC}{\mathbb{C}}
\newcommand{\RR}{\mathbb{R}}
\newcommand{\PP}{{\mathcal{P}}}
\DeclareMathOperator{\colim}{colim}
\DeclareMathOperator{\Aut}{Aut}
\newcommand{\im}{\mathrm{Im}\,}
\newcommand{\quot}[2]{\left.\raisebox{.2em}{$#1$}\middle/\raisebox{-.2em}{$#2$}\right.}
\newcommand{\ev}{\mathrm{ev}}
\newcommand{\q}[1]{``#1''}
\newtheorem{theorem}{Theorem}[section]
\newtheorem*{thm}{Theorem}
\newtheorem{proposition}[theorem]{Proposition}
\newtheorem{lemma}[theorem]{Lemma}
\newtheorem{corollary}[theorem]{Corollary}
\theoremstyle{definition}
\newtheorem{definition}[theorem]{Definition}
\theoremstyle{remark}
\newtheorem{example}[theorem]{Example}					
\newtheorem{remark}[theorem]{Remark}
\newtheorem*{Note}{Note}
\numberwithin{equation}{section}
\newcommand{\nvar}[2]{%
	\newlength{#1}
	\setlength{#1}{#2}
}
\nvar{\dg}{0.3cm}
\def\dw{0.25}\def\dh{0.5}
\def\link{\draw [double distance=1.5mm, very thick] (0,0)--}
\def\joint{%
	\filldraw [fill=white] (0,0) circle (5pt);
	\fill[black] circle (2pt);
}
\def\grip{%
	\draw[ultra thick](0cm,\dg)--(0cm,-\dg);
	\fill (0cm, 0.5\dg)+(0cm,1.5pt) -- +(0.6\dg,0cm) -- +(0pt,-1.5pt);
	\fill (0cm, -0.5\dg)+(0cm,1.5pt) -- +(0.6\dg,0cm) -- +(0pt,-1.5pt);
}
\def\robotbase{%
	\draw[rounded corners=8pt] (-\dw,-\dh)-- (-\dw, 0) --
	(0,\dh)--(\dw,0)--(\dw,-\dh);
	\draw (-1,-\dh)-- (1,-\dh);
	\fill[pattern=north east lines] (-1,-1) rectangle (1,-\dh);
}
\begin{document}
	
	%%%%%%%%%%%%%%%% Title, authors and abstract %%%%%%%%%%%%%%%%%%%%%%%
	\title[On properties of effective $\tc$ and LS-category ]{On properties of effective topological complexity and effective Lusternik-Schnirelmann category}
	
	\author[Z. B\l{}aszczyk]{Zbigniew B\l{}aszczyk}
	\address{Faculty of Mathematics and Computer Science,
		Adam Mickiewicz University, Umultowska 87, 60-479 Pozna\'n, Poland.}
	\email{blaszczyk@amu.edu.pl}
	
	\author[A. Espinosa Baro]{Arturo Espinosa Baro}
	\address{Faculty of Mathematics and Computer Science,
		Adam Mickiewicz University, Umultowska 87, 60-479 Pozna\'n, Poland.}
	\email{arturo.espinosabaro@gmail.com, artesp1@amu.edu.pl}
	
	\author[A. Viruel]{Antonio Viruel}
	\address{Departamento de \'Algebra, Geometr\'{\i}a y Topolog\'{\i}a, Universidad de M\'alaga,
		Campus de Teatinos, s/n, 29071-M\'alaga, Spain}
	\email{viruel@uma.es}
	
	\subjclass{55M30 (68T40)}
	\keywords{LS category, equivariant topological complexity, effective topological complexity, sectional category.}
	
	\begin{abstract}
		The notion of \textit{effective topological complexity}, introduced by B\l{}aszczyk and Kaluba, deals with using group actions in the configuration space in order to reduce the complexity of the motion planning algorithm. In this article we focus on studying several properties of such notion of topological complexity. We introduce a notion of effective LS-category which mimics the behaviour the usual LS-category has in the non-effective setting. We use it to investigate the relationship between these effective invariants and the orbit map with respect of the group action, and we give numerous examples.  Additionally, we investigate non-vanishing criteria based on a cohomological dimension bound of the saturated diagonal.

	\end{abstract}
	
	\maketitle
	
	\setcounter{tocdepth}{1}
	
	\tableofcontents
	
	%%%%%%%%%%%%%%%% Main body of document %%%%%%%%%%%%%%%%%%%%%%%
	
		\section*{\centering Introduction}
\subsection*{The motion planning problem and topological complexity} By giving a solution of the \emph{motion planning problem} on a topological space $X$ we understand providing an algorithm which, given any pair of points \mbox{$(x,y) \in X \times X$}, outputs a path in $X$ with initial point $x$ and terminal point $y$. Whenever considered in the context of robotics, $X$ becomes the so called \textit{configuration space}, or space of all posible states of a mechanical system, and such an algorithm can be interpreted as finding the process of moving the robot from one state to another. Formally, let $PX$ denote the space of continuous paths in~$X$ with the compact-open topology. A \textit{motion planning algorithm} (or~a~\textit{motion planner}) in $X$ is a section of the so called \textit{path space fibration} $\pi \colon PX \to X \times X$ given by $\pi(\gamma) = \big(\gamma(0), \gamma(1)\big)$, i.e. a map $s \colon X \times X \to PX$ such that $\pi \circ s = \textrm{id}_{X\times X}$. 

Naturally, one would hope for a motion planner to be continuous, i.e. that small changes in either of the state points translates into a predictable change of the path followed by the robot. However, this situation is rarely possible: as shown by M. Farber in \cite{Farber03}, such continuous motion planners exist only in contractible spaces. Noticing the relationship between that instability and the topological features of the configuration space, Farber introduced the notion of \emph{topological complexity}, denoted by $\tc$, as a measure of the degree of discontinuity of the motion planning problem. $\tc$ turns out to be a particular case of the more general notion of \textit{sectional category} originally introduced as genus of a fibration by Schwarz in \cite{Schwarz66}, and later on generalized to arbitrary maps, see for example \cite{BernsteinGanea}. Since its inception, the concept of topological complexity has proven to be extremely rich, both as a purely theoretical homotopy invariant related to other interesting notions (such as the Lusternik-Schnirelmann category), and as a tool with many possible applications in robotics, and many different variants of the original definition has sprouted, depending on the specific information each of them measure.  

\subsection*{Symmetries on the motion planning} One of those kinds of ''specific information" concerns the impact of the symmetries that often appear in the configuration spaces, and that one might want to take into account whenever studying the instability of the motion planning problem. Formally, those symmetries are seen as actions of groups on the base topological space $X$ and, as such, this naturally leads to the consideration of equivariant versions of topological complexity. There are several non-equivalent approaches to the matter, such as the \emph{equivariant} topological complexity from H. Colman and M. Grant \cite{GrantColman12}, the \emph{strongly equivariant} $\tc$ developed by A. Dranishnikov in \cite{Dranishnikov15} as a variant of the former, the \emph{invariant} $\tc$ of W. Lubawski and W. Marzantowicz introduced in \cite{LubawskiMarzan14}, or the more recent notions of \emph{effectual} topological complexity (by N. Cadavid-Aguilar, J. González, B. Gutiérrez and C. Ipanaque-Zapata, \cite{Gonzalez21}) and \emph{orbital} $\tc$ (defined by E. Balzer and E. Torres-Giese, \cite{BalzerGiese21}). We will not delve into the definitions, so we just refer the interested reader to the excelent surveys on the subject by Ángel and Colman \cite{AnCol18} or Grant \cite{GrantEquiv24}.

However, these versions of equivariant topological complexity, while mathematically relevant on their own, do not take into account the possibility of ``easing" the task of the motion planning through the use of the symmetries of the space. Consider, for example, the case of a robotic arm with two identical pliers, such as the represented in the figure:
	
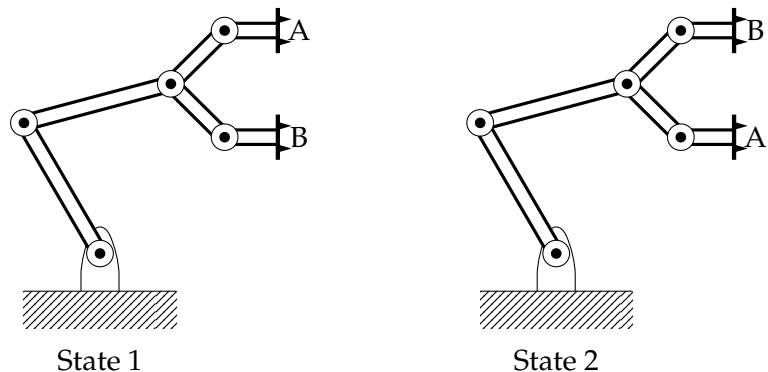
\begin{figure}[H]
	\centering
	\begin{tikzpicture}
		
		\node at (90:-40pt) {State 1};
		\robotbase
		\link(120:2);
		\joint
		\begin{scope}[shift=(120:2), rotate=60]
			\link(-45:2);
			\joint
			\begin{scope}[shift=(-45:2), rotate=-60]
				\link(45:1);
				\link(-45:1);
				\joint
				\begin{scope}[shift=(45:1), rotate=45]
				\link(-45:0.7);
				\joint
				\begin{scope}[shift=(-45:0.7), rotate=-45]
				\grip;
				\node at (1:8pt) {A};
				\end{scope}
				\end{scope}
				\begin{scope}[shift=(-45:1), rotate=45]
					\link(-45:0.7);
					\joint
					\begin{scope}[shift=(-45:0.7), rotate=-45]
						\grip;
						\node at (1:8pt) {B};
					\end{scope}
				\end{scope}
			\end{scope}
		\end{scope}
		
		\begin{scope}[xshift=6cm]
		\node at (90:-40pt) {State 2};
\robotbase
\link(120:2);
\joint
\begin{scope}[shift=(120:2), rotate=60]
	\link(-45:2);
	\joint
	\begin{scope}[shift=(-45:2), rotate=-60]
		\link(45:1);
		\link(-45:1);
		\joint
		\begin{scope}[shift=(45:1), rotate=45]
			\link(-45:0.7);
			\joint
			\begin{scope}[shift=(-45:0.7), rotate=-45]
				\grip;
				\node at (1:8pt) {B};
			\end{scope}
		\end{scope}
		\begin{scope}[shift=(-45:1), rotate=45]
			\link(-45:0.7);
			\joint
			\begin{scope}[shift=(-45:0.7), rotate=-45]
				\grip;
				\node at (1:8pt) {A};
			\end{scope}
		\end{scope}
	\end{scope}
\end{scope}
		\end{scope}
	\end{tikzpicture}
	\caption{A mechanical arm in physically different, but functionally equivalent
		states, since grips A and B are indistinguishable.}
	\label{robotarm}
\end{figure}

	Observe that, while both states are different, any object that has to be manipulated by the arm can be grabbed equally well in both cases. Situations equivalent to this one are extremely common in the world of mechanical systems. The problem is that the original approach to topological complexity does not take this sort of phenomena into account. And yet, the example above suggests that symmetries in configuration spaces can simplify the task of motion planning, given that, even though symmetric positions are physically different, they can be considered as functionally equivalent. Therefore every planning algorithm instructing a robot how to move between all possible states is a waste of effort, and can be made easier if we take into account this functional equivalences. In order to study this possibilites, B\l{}aszczyk and Kaluba introduced in \cite{BlKa2} a new invariant, with precisely this foundational idea, which they baptised as \emph{effective} topological complexity. In light of this notion, the effective motion planners considered in this context output paths that are tipically no longer continuous, but with discontinuities parametrized by the symmetries of the configuration space. As such, whenever a mechanical system follows such a path and runs into a point of discontinuity, it re-interprets its position accordingly within a batch of symmetric positions, and then resumes normal movement.
	
	\subsection*{Contents and structure of this article} So far, the effective topological complexity remains as poorly understood variant of $\tc$. The purpose of this article is to contribute to the understanding of said invariant, by investigating some of its properties. In Section 1 we recall the notion of effective topological complexity, and we review most of its basic properties. In Section 2 we start our analysis by studying the relationship between the different broken path spaces, and introducing the notion of the global effective path space, as a limit of a chain of inclusions. Section 3 is devoted to the definition of an effective version of the Lusternik-Schnirelmann category, which will play an analogous role to the classic LS-category in this setting. In particular, we obtain the effective version of the classic bound of $\tc$ in terms of LS-category, and other crucial properties summarized in the following theorem.
	
	\begin{thm}[Theorem \ref{CatTCEffIneq}, Proposition \ref{Catmap} and Corollary \ref{catTC0}]
		Let $X$ be a $G$-space. The following statements hold:
		\begin{enumerate}[(1)]
			\item $ \cat^{G, \infty}(X) \leq \tc^{G, \infty}(X) \leq \cat^{G \times G, \infty}(X \times X) \leq 2 \cat^{G, \infty}(X).$
			\item Let $\rho_{X} \colon X \rightarrow X/G$ be the orbit map with respect to the action of $G$. Then $\cat(\rho_X) \leq \cat^{G, \infty}(X).$
			\item $\cat^{G,\infty}(X) = 0$ if and only if $\tc^{G, \infty}(X) = 0.$
		\end{enumerate}
	\end{thm}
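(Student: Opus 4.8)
\textbf{Proof strategy for part (1).}
The plan is to mimic, in the effective setting, the classical string of inequalities $\cat(X)\le\tc(X)\le\cat(X\times X)\le 2\cat(X)$. For the first inequality $\cat^{G,\infty}(X)\le\tc^{G,\infty}(X)$, I would restrict an effective motion planner to the slice $\{x_0\}\times X\subseteq X\times X$ (for a basepoint $x_0$ fixed by the action, or more carefully, to the appropriate image of $X$ under a diagonal-type map compatible with the broken path spaces). An effective sectional cover of the broken path fibration over $X\times X$ pulls back to an effective categorical cover of $X$ once one checks that the notion of ``effective contractibility'' of an open set is exactly what one gets by restricting the ``effective path-lifting'' condition; this is the step where one must be careful that the combinatorics of the broken paths (the number of allowed discontinuities, the group elements used) behave well under restriction, so I expect to invoke the basic properties of the effective path spaces recalled in Section~1. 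For the middle inequality $\tc^{G,\infty}(X)\le\cat^{G\times G,\infty}(X\times X)$, the observation is that the effective path space fibration over $X\times X$ is, up to the relevant broken-path bookkeeping, the effective evaluation-at-endpoints map, which factors through the effective free path fibration on $X\times X$; the $G\times G$-action on $X\times X$ is the natural one, and an effective categorical cover of $X\times X$ for this action yields an effective motion planner. The last inequality $\cat^{G\times G,\infty}(X\times X)\le 2\cat^{G,\infty}(X)$ should follow from a product/subadditivity property of effective category: given effective categorical covers $\{U_i\}$ of $X$ (for the $G$-action) of size $\cat^{G,\infty}(X)+1$, the sets $U_i\times U_j$ form an effective categorical cover of $X\times X$ for the $G\times G$-action, and by the standard ``staircase'' argument one can re-index to get a cover of size $2\cat^{G,\infty}(X)+1$. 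The main obstacle here is establishing the product inequality for $\cat^{\bullet,\infty}$: in the effective world an open set is ``categorical'' not via a nullhomotopy but via an effective contraction through broken paths, and one must verify that the product of two such effective contractions is again an effective contraction for the product action — this requires matching up the discontinuity data on the two factors, and is the place where a genuinely new (if routine) verification is needed rather than a direct quotation of the classical proof.

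\textbf{Proof strategy for part (2).}
Here I would unwind the definition of $\cat^{G,\infty}(X)$: it is one less than the minimal cardinality of an open cover $\{U_i\}$ of $X$ such that each $U_i$ admits an effective contraction, i.e. a path in the appropriate broken path space from the inclusion $U_i\hookrightarrow X$ to a constant map, where the allowed ``jumps'' are along $G$-orbits. The key point is that such an effective contraction descends to an \emph{honest} nullhomotopy after composing with the orbit map $\rho_X\colon X\to X/G$, because the discontinuities are precisely along orbits and therefore become continuous (indeed constant in the jump direction) once one passes to $X/G$. Thus each $U_i$, composed with $\rho_X$, is null-homotopic in $X/G$, which is exactly the condition that $\{U_i\}$ witnesses $\cat(\rho_X)\le \#\{U_i\}-1$. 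So the inequality $\cat(\rho_X)\le\cat^{G,\infty}(X)$ drops out once one checks the ``descent of effective contractions'' claim; I expect this to be short, the only subtlety being to confirm that the broken path, when projected, really does yield a continuous homotopy (one uses that consecutive pieces of a broken path have endpoints in the same orbit, which map to the same point in $X/G$, so the projected concatenation is continuous).

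\textbf{Proof strategy for part (3).}
One direction, $\cat^{G,\infty}(X)=0\Rightarrow\tc^{G,\infty}(X)=0$, is immediate from part (1), since $0\le\tc^{G,\infty}(X)\le\cat^{G\times G,\infty}(X\times X)\le 2\cat^{G,\infty}(X)=0$. For the converse, suppose $\tc^{G,\infty}(X)=0$, i.e. there is a global effective motion planner $s\colon X\times X\to (\text{broken path space})$ defined on all of $X\times X$. Restricting to $\{x_0\}\times X$ gives a global effective contraction of $X$ onto the orbit-class of $x_0$, so $\cat^{G,\infty}(X)=0$; this is the same restriction argument used in part (1), now with a single open set equal to all of $X$. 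The one thing to watch is the analogue of the classical subtlety that $\tc(X)=0\Rightarrow\cat(X)=0$ requires the path fibration to have a \emph{global} section — here the effective setting already packages the global section hypothesis into $\tc^{G,\infty}(X)=0$, so no extra work (such as choosing basepoints in each component) is needed beyond noting that $X$ is path-connected in the effective sense, which follows from the existence of the global effective motion planner itself. Hence the main obstacle across all three parts is concentrated in part (1), specifically the product inequality for effective LS-category; parts (2) and (3) are then essentially corollaries of part (1) together with the ``projection kills effective discontinuities'' observation.
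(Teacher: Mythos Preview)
Your proposal is essentially correct and follows the same overall strategy as the paper, but you are working harder than necessary in a couple of places because you do not exploit the sectional-category framework that the paper has already set up.

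In the paper, $\cat^{G,k}(X)$ is \emph{defined} as $\secat(q_k)$, where $q_k\colon P_*^k(X)\to X$ is the pullback of $\pi_k\colon\mathcal P_k(X)\to X\times X$ along the inclusion $X\hookrightarrow X\times X$, $x\mapsto(x_0,x)$. With this in hand, the first and last inequalities of part~(1) are one-liners: the first is Schwarz's pullback inequality (Theorem~\ref{TheoremSecatProperties}(3)) applied to this very pullback square, and the last is Schwarz's product inequality (Theorem~\ref{TheoremSecatProperties}(4)) applied to $q_k\times q_k$, using the identification $P_*^k(X\times X)\cong P_*^k(X)\times P_*^k(X)$. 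So your careful discussion of a staircase argument and of ``matching up discontinuity data on the two factors'' is unnecessary; the broken-path bookkeeping is already absorbed into the fact that $q_k$ is a fibration and $\secat$ is subadditive under products. Only the middle inequality requires an explicit construction, and the paper's is exactly the reverse-and-concatenate idea you gesture at: a section of $q_k$ over $U\subset X\times X$ has the form $((s_1,\dots,s_k),(s'_1,\dots,s'_k))$, and one sends $(x,y)$ to $(s_k(x,y)^{-1},\dots,s_1(x,y)^{-1}\ast s'_1(x,y),\dots,s'_k(x,y))\in\mathcal P_{2k-1}(X)$, a local section of $\pi_{2k-1}$.

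Your argument for part~(2) is precisely the paper's: project each component of a broken path through $\rho_X$ and concatenate to get a genuine path in $X/G$, so a $q_k$-section over $U$ yields a nullhomotopy of $\rho_X|_U$. For part~(3) you are again working too hard: once part~(1) is established, \emph{both} directions are immediate from the chain $\cat^{G,\infty}(X)\le\tc^{G,\infty}(X)\le 2\cat^{G,\infty}(X)$, so no separate restriction argument is needed for the converse.
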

	
 In Section 4 we will turn our attention to discussing the problem of determining the kind of $G$-spaces with $\tc^{G, \infty}(X) = 0$. The relationship hinted there between the orbit projection map $\rho_X \colon X \rightarrow X/G$ and the effective $\tc$ will be adressed further in Section 5. Here we study the relationship between $\rho_X$ and $\tc^{G,\infty}(X)$ in two distinct cases: when the orbit projection map has a strict section, and when it is a fibration. Our findings are summarized in the following theorem.
	
	\begin{thm}[Theorem \ref{EfOrbSect} and Theorem \ref{EffOrbFibr}]
		
		Let $X$ be a $G$-space. 
		
		\begin{enumerate}[(1)]
		\item If $\rho_X : X \rightarrow X/G$ has a strict section $s : X/G \rightarrow X$, the following holds:
		\begin{enumerate}[(a)]
			\item $\cat^{G, \infty}(X) = \cat(X/G)$.
			\item $\tc^{G, \infty}(X) = \tc(X/G)$.
		\end{enumerate}  
		\item If the orbit map $\rho_X$ is a fibration, then we have:
		\begin{enumerate}[(a)]
			\item $\cat^{G, \infty}(X) = \cat^{G, 2}(X) = \cat(\rho_X) \leq \cat(X/G)$.
			\item $\tc^{G, \infty}(X) = \tc^{G,2}(X) \leq \tc(X/G)$.
		\end{enumerate}
		\end{enumerate}
	\end{thm}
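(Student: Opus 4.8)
\section*{Proof strategy}

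The plan is to prove all four items by the same mechanism: transport categorical/motion‑planning covers back and forth between $X$ and $X/G$ along the orbit map $\rho_X$. In one direction I use the (standing) fact that $\rho_X$ sends a broken path in $X$ to an honest path in $X/G$ — consecutive segments concatenate because orbit–equivalent points become equal — together with the fact that $\rho_X$ is an open map. In the other direction I lift honest paths and homotopies from $X/G$ to $X$, using the strict section $s$ in Part~(1) and a lifting function of the fibration $\rho_X$ in Part~(2). Throughout I use the general inequalities that are already available: $\cat^{G,\infty}(X)\le\cat^{G,k}(X)$ and $\tc^{G,\infty}(X)\le\tc^{G,k}(X)$ for every $k$ (a local section of the $k$‑th effective fibration is a local section of the global one, by Section~2); $\cat(\rho_X)\le\cat^{G,\infty}(X)$ by Proposition~\ref{Catmap}; and $\cat(\rho_X)\le\cat(X/G)$ (pull back a categorical cover of $X/G$ along $\rho_X$: on each preimage the composite to $X/G$ is null‑homotopic). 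As elsewhere, I assume $X$, hence $X/G$, path‑connected, so that basepoints and the terminal constants of null‑homotopies can be aligned.

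For Part~(1), write $n=\cat(X/G)$, take a categorical open cover $\{\overline U_i\}_{i=0}^n$ of $X/G$ with null‑homotopies $\overline H_i$ all ending at a common constant $\bar c_0$, put $x_0=s(\bar c_0)$ and $U_i=\rho_X^{-1}(\overline U_i)$. For $x\in U_i$ the path $s\circ\overline H_i(\rho_X(x),-)$, read backwards, runs in $X$ from $x_0$ to $s(\rho_X(x))$, and since $\rho_X s=\mathrm{id}$ the point $s(\rho_X(x))$ lies in the orbit of $x$; concatenating it as a broken path with the constant path at $x$ gives a continuously varying $2$‑broken path from $x_0$ to $x$, whence $\cat^{G,2}(X)\le n$ and so $\cat^{G,\infty}(X)\le\cat(X/G)$. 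The same recipe applied to a motion‑planning cover of $X/G\times X/G$ pulled back along $\rho_X\times\rho_X$, now padding with constant paths at both ends, yields $\tc^{G,\infty}(X)\le\tc^{G,3}(X)\le\tc(X/G)$. For the reverse inequalities I start from optimal effective covers. If $\{W_j\}_{j=0}^m$ is an effective categorical cover of $X$ with sections $\sigma_j$ into the global effective path space, then $\{s^{-1}(W_j)\}_{j=0}^m$ is an open cover of $X/G$ and $\bar y\mapsto \rho_X\circ\sigma_j(s(\bar y))$ is a continuous null‑homotopy of $s^{-1}(W_j)$ in $X/G$; hence $\cat(X/G)\le\cat^{G,\infty}(X)$. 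Similarly, if $\{W_j\}$ is an effective motion‑planning cover of $X\times X$, then $\{(s\times s)^{-1}(W_j)\}$ covers $X/G\times X/G$ and $(\bar x,\bar y)\mapsto\rho_X\circ\sigma_j(s(\bar x),s(\bar y))$ is a local motion planner, so $\tc(X/G)\le\tc^{G,\infty}(X)$. This proves (1).

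For Part~(2), fix a lifting function $\lambda$ for the fibration $\rho_X$. In (2a) it suffices, given the general inequalities, to prove $\cat^{G,2}(X)\le\cat(\rho_X)$: if $X=\bigcup_{i=0}^n U_i$ with each $\rho_X|_{U_i}$ null‑homotopic, modify the null‑homotopies so that each ends at the constant $\rho_X(x_0)$, lift them through $\lambda$ to homotopies $U_i\times I\to X$ starting at the inclusion; their time‑$1$ maps then send $U_i$ into the orbit of $x_0$, and reversing such a homotopy and prepending the constant path at $x_0$ produces a continuously varying $2$‑broken path from $x_0$ to $x$. Then $\cat(\rho_X)\le\cat^{G,\infty}(X)\le\cat^{G,2}(X)\le\cat(\rho_X)$ forces the equalities, and $\cat(\rho_X)\le\cat(X/G)$ is the general inequality. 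In (2b) the new ingredient is the compression identity $\tc^{G,2}(X)=\tc^{G,\infty}(X)$: given an effective motion‑planning cover $\{W_j\}$ of $X\times X$ with sections $\sigma_j$ into the global effective path space, send $(x,y)$ to the honest path $\rho_X\circ\sigma_j(x,y)$ in $X/G$ (well defined and continuous, since projecting‑and‑concatenating is continuous on each $P^{G,k}(X)$, hence on their colimit), lift that path through $\lambda$ so it starts at $x$ and therefore ends in the orbit of $y$, and append the constant path at $y$; this is a $2$‑broken local section, so $\tc^{G,2}(X)\le\tc^{G,\infty}(X)$, while the reverse is general. Finally $\tc^{G,2}(X)\le\tc(X/G)$ follows by pulling back a motion‑planning cover of $X/G\times X/G$ along $\rho_X\times\rho_X$ and applying $\lambda$ with the same padding.

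The main obstacle is the fibration step in Part~(2): one must set up the lifting function $\lambda$ and the projection map from the global effective path space to $(X/G)^I$, and then check that all the compositions involved — lifting through $\lambda$, projecting a broken path to $X/G$, concatenating, padding with constant paths — are continuous for the compact–open topology and compatible with the colimit defining the global effective path space, so that the homotopy‑lifting construction for $\cat^{G,2}(X)\le\cat(\rho_X)$ and the compression construction for $\tc^{G,2}(X)=\tc^{G,\infty}(X)$ really do yield continuous families of broken paths over each member of the cover. The remaining bookkeeping — aligning the terminal constants of the null‑homotopies with $\rho_X(x_0)$, and tracking how many breaks each construction costs (two in the categorical statements and in the fibration case, three in the section case, before invoking $\cat^{G,\infty}\le\cat^{G,k}$ and $\tc^{G,\infty}\le\tc^{G,k}$) — is routine once path‑connectedness is in force.
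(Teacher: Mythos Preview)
Your proposal is correct and follows essentially the same route as the paper: in Part~(1) you transport covers along $\rho_X$ using the section $s$ (landing in $\mathcal{P}_2$ for $\cat$ and $\mathcal{P}_3$ for $\tc$ in one direction, and using $(s\times s)^{-1}$ together with projection--concatenation in the other), and in Part~(2) you lift the projected homotopies through the fibration $\rho_X$ and pad with a constant path to land in $\mathcal{P}_2$, exactly as the paper does. The only cosmetic difference is that you phrase the compression step for $\tc^{G,2}=\tc^{G,\infty}$ via sections into the colimit $\mathcal{P}_\infty(X)$, whereas the paper simply fixes a finite stage $k$ realising $\tc^{G,\infty}(X)$; both formulations yield the same continuous projection to $P(X/G)$ and the argument is unchanged.
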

	
	We will discuss plenty of examples in both situations, mostly concerning actions of compact Lie groups, and some consequences of the above result. 
	
In our final Section 6 we will show how the broken path space at stage two $\mathcal{P}_2(X)$ is homotopically equivalent to the saturated diagonal $\daleth(X)$, and we will make use of this information to derive some cohomological conditions for the non-vanishing of the $2$-effective topological complexity for compact $G$-ANR with $G$ finite. 
	
	\begin{thm} [Theorem \ref{PropDimDiag1}, Corollary \ref{TC2great0}]
	Let	$G$ be a finite group, and $X$ a compact $G$-ANR such that $\cd(X^H) \leq \cd(X)$ for all non-trivial subgroup $H \leqslant G$. Then, for any $L$ list of elements of $G$, $\cd(\daleth_L(X)) \leq \cd(X) + |L| - 1$. In particular, we have  $$ \cd(\daleth(X)) \leq \cd(X) + |G| - 1. $$	Under these assumptions, if $|G| \leq \cd(X)$, then it holds that $\tc^{G,2}(X) > 0$. 
	\end{thm}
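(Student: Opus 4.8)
The statement has two parts—the cohomological-dimension bound for $\daleth_L(X)$ and its consequence $\tc^{G,2}(X)>0$—and I would prove them in that order. Write $\Gamma_g=\{(x,gx):x\in X\}\subseteq X\times X$ for the graph of the action of $g\in G$, so that $\Gamma_g\cong X$, and recall that $\daleth_L(X)=\bigcup_{g\in L}\Gamma_g$ is a closed subspace of $X\times X$ covered by the $|L|$ closed sets $\Gamma_g$ (we may assume the elements of $L$ are distinct, since repeated entries only shrink the cover and weaken the bound). The geometric heart of the matter is that every non-empty intersection of members of this cover is, up to homeomorphism, a fixed-point set:
\[
\Gamma_{g_1}\cap\cdots\cap\Gamma_{g_k}=\{(x,g_1x):g_i^{-1}g_j\cdot x=x\ \text{for all }i,j\}\ \cong\ X^{H},\qquad H=\langle\, g_i^{-1}g_j:1\le i,j\le k\,\rangle.
\]
For $k=1$ this is $\Gamma_{g_1}\cong X$, of cohomological dimension $\cd(X)$; for $k\ge 2$ the subgroup $H$ contains $g_1^{-1}g_2\neq e$ and is therefore non-trivial, so the hypothesis gives $\cd(\Gamma_{g_1}\cap\cdots\cap\Gamma_{g_k})=\cd(X^{H})\le\cd(X)$. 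Thus \emph{every} non-empty intersection of cover elements has cohomological dimension at most $\cd(X)$.

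To pass from this to $\cd(\daleth_L(X))$ I would feed it into the Mayer--Vietoris spectral sequence of the closed cover $\{\Gamma_g\}_{g\in L}$,
\[
E_1^{p,q}=\bigoplus_{\substack{I\subseteq L\\ |I|=p+1}}H^{q}\!\Big(\textstyle\bigcap_{g\in I}\Gamma_g;\ \mathcal F\Big)\ \Longrightarrow\ H^{p+q}\big(\daleth_L(X);\mathcal F\big),
\]
for an arbitrary coefficient sheaf $\mathcal F$. This is legitimate because $X$ is a compact ANR: each intersection $\bigcap_{g\in I}\Gamma_g\cong X^{H_I}$ is again a compact ANR (fixed-point sets of finite group actions on ANRs are ANRs) and closed, so all the inclusions in play are cofibrations—alternatively one may replace everything by a finite CW pair via West's theorem and cellular approximation. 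By the previous paragraph $E_1^{p,q}=0$ for $q>\cd(X)$, and trivially $E_1^{p,q}=0$ for $p\ge|L|$ (there are no $(|L|+1)$-fold intersections among $|L|$ sets). Hence $E_\infty^{p,q}$ is concentrated in the rectangle $0\le p\le|L|-1$, $0\le q\le\cd(X)$, so $H^{n}(\daleth_L(X);\mathcal F)=0$ for every $n>\cd(X)+|L|-1$ and every $\mathcal F$; that is, $\cd(\daleth_L(X))\le\cd(X)+|L|-1$. One could instead induct on $|L|$ using the elementary estimate $\cd(A\cup B)\le\max\{\cd A,\cd B,\cd(A\cap B)+1\}$, the spectral sequence being simply a clean way to package this. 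Taking $L$ to list all elements of $G$ gives $\daleth_L(X)=\daleth(X)$ and the asserted bound $\cd(\daleth(X))\le\cd(X)+|G|-1$.

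For the non-vanishing statement I would use Section~6, where $\mathcal P_2(X)$ is identified—compatibly with the respective maps to $X\times X$—with $\daleth(X)$; since $\tc^{G,2}(X)$ is the sectional category of $\mathcal P_2(X)\to X\times X$, this yields $\tc^{G,2}(X)=\secat\big(\daleth(X)\hookrightarrow X\times X\big)$ (in the homotopy sense). If this quantity were $0$, there would be a map $r\colon X\times X\to\daleth(X)$ whose composite with the inclusion $\iota\colon\daleth(X)\hookrightarrow X\times X$ is homotopic to $\id_{X\times X}$; then $\iota^{*}\colon H^{*}(X\times X;\mathcal F)\to H^{*}(\daleth(X);\iota^{*}\mathcal F)$ is injective for every sheaf $\mathcal F$. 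Since $H^{k}(\daleth(X);-)$ vanishes for $k>\cd(\daleth(X))$, this forces $\cd(X\times X)\le\cd(\daleth(X))$, and combining with the bound just proved, $\cd(X\times X)\le\cd(X)+|G|-1$. On the other hand, for a compact ANR one has the product formula $\cd(X\times X)=2\,\cd(X)$, so $2\,\cd(X)\le\cd(X)+|G|-1$, i.e. $\cd(X)\le|G|-1<|G|$, contradicting the hypothesis $|G|\le\cd(X)$. Hence $\tc^{G,2}(X)>0$.

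The substantive work is entirely in the first part, and within it the two things to nail down are the explicit description of the multiple intersections as fixed-point sets $X^{H}$—this is exactly where the hypothesis $\cd(X^{H})\le\cd(X)$ is consumed, collapsing the $E_1$-page above the row $q=\cd(X)$—and the verification that the Mayer--Vietoris spectral sequence of the closed cover is available in the compact-ANR category (equivalently, that $\daleth_L(X)$ with this cover can be modelled by a finite CW pair). The remaining ingredients are routine: the boundary case $k=1$ in the intersection estimate, the reduction to distinct elements of $L$, and—for the corollary—the Section~6 equivalence together with the classical fact that compact ANRs are dimensionally full-valued, which gives $\cd(X\times X)=2\,\cd(X)$.
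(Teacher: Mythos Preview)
Your argument is correct, and for the second assertion it is essentially the paper's: both reduce $\tc^{G,2}(X)>0$ to the observation that $H^{2\cd(X)}(X\times X;-)$ cannot inject into $H^{2\cd(X)}(\daleth(X);-)$ once the latter vanishes by the dimension bound. (A small terminological slip: ``dimensionally full-valued'' is standard for covering dimension, not for the local-coefficient $\cd$ used here; what you actually need is $\cd(X\times X)\ge 2\cd(X)$, which the paper also invokes without proof as ``$H^{2n}(X\times X)\neq 0$''.)

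For the dimension bound on $\daleth_L(X)$, however, your route genuinely differs from the paper's. The paper proceeds by a double induction: first an auxiliary lemma bounding $\cd\big(\bigcup_{H\in L}X^{H}\big)$ by induction on $|L|$ via the two-set Mayer--Vietoris sequence, and then the theorem itself, again by induction on $|L|$, where at each step the intersection $\big(\bigcup_{g\in L'}\daleth_g\big)\cap\daleth_r$ is a \emph{union} $\bigcup_g X^{\langle r^{-1}g\rangle}$ of fixed-point sets, forcing the appeal to the auxiliary lemma. Your key observation---that any $k$-fold intersection $\Gamma_{g_1}\cap\cdots\cap\Gamma_{g_k}$ is already homeomorphic to a \emph{single} fixed-point set $X^{\langle g_i^{-1}g_j\rangle}$---eliminates that auxiliary step entirely: the Mayer--Vietoris spectral sequence of the closed cover $\{\Gamma_g\}_{g\in L}$ then has $E_1^{p,q}=0$ for $q>\cd(X)$ or $p\ge|L|$, and the bound drops out in one stroke. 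This is cleaner and more conceptual; the paper's approach has the compensating virtue of using only the elementary two-set Mayer--Vietoris sequence rather than the spectral sequence machinery. The $G$-ANR hypothesis is what makes your spectral sequence (equivalently, the paper's iterated Mayer--Vietoris) available for singular or sheaf cohomology, since all the relevant intersections are then again ANRs and the inclusions are closed cofibrations.
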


\bigskip	
\begin{Note}
	In what follows, and unless something different is explicitely stated, we will always consider well pointed spaces with CW-complex structure, and all the actions of the groups considered will be taken as cellular actions. 
\end{Note}
	
	\noindent\textbf{Convention on $\tc$.} In the literature there exists two approaches to topological complexity, namely the non-reduced and the reduced ones, depending whether $\tc$ corresponds with the exact number of local sections or with said number minus one, respectively. In this paper, all the topological complexities are considered as reduced, whereas all the calculations in \cite{BlKa2} are made in the non-reduced setting.
	
	\section*{Acknowledgements}
	The second author was partially supported by Polish National Science Center research grant UMO-2022/45/N/ST1/02814, by Excellence Initiative - Research University project 021-13-UAM-0051 and by a doctoral scholarship of Adam Mickiewicz University. The third author was partially supported by the Spanish Ministerio de Ciencia e Innovaci\'on, project PID2020-118753GB-I00, and Andalusian
	Consejer{\'\i}a de Universidad, Investigaci\'on e Innovaci\'on, project PROYEXCEL-00827.
	
	The authors wish to express their gratitude to Stephan Mescher for his many useful commentaries on an early version of the manuscript, which helped improving the quality of the text, and for bringing to our attention the application of Theorem \ref{EffOrbFibr} to the setting of frame bundles of Riemannian manifolds (see Corollary \ref{TCFrameBundles}).
	
	\section{\centering Review of effective TC} \label{Section1}
	
	Recall that the \textit{sectional category} of a map $f \colon X \to Y$, written $\secat(f)$, is defined to be the smallest integer $n \geq 0$ such that there exists an open cover $U_0$, \ldots, $U_n$ of~$Y$ and continuous maps $s_i \colon U_i \to X$ with the property that $f \circ s_i$ is homotopic to the inclusion $U_i \hookrightarrow Y$ for any $0 \leq i \leq n$. Further on we will make extensive use of both the notion of sectional category and some basic properties of it, so let us recall some of the most useful ones. The following proposition summarizes the basic properties of the sectional category of a fibration. For details on the proofs, we refer the interested reader to the seminal paper on the topic by Schwarz, \cite{Schwarz66}.
	
	\begin{theorem}\label{TheoremSecatProperties}
		Let $F \rightarrow E \xrightarrow{p} B$ a fibration. The following statements hold:
		
		\begin{enumerate}
			\item $\secat(p) \leq \cat(B)$.
			\item Let $k > 0$ the maximal integer such that there exist $$ u_1, \cdots, u_k \in \ker \{ \tilde{H}^*(B;R) \xrightarrow{p^*} \tilde{H}^*(E;R) \} $$ with $u_1 \cup \cdots \cup u_k \neq 0$. Then $\secat(p) \geq k$.
		%	\item $\secat(p) \leq k$ if and only if the $(k+1)$-fold fibrewise join of $p$ has a section. 
			\item Let $p \colon E \rightarrow B$ be a fibration, and $f \colon X \rightarrow B$ a map. Consider the pullback fibration $f^*p$ over $B$. Then $$ \secat(f^*p) \leq \secat(p). $$
			In particular, if $p \colon E \rightarrow B$ is a fibration and $h \colon B' \rightarrow B$ is a homotopy equivalence, then $\secat(p) = \secat(p')$ for $p'$ the induced fibration of $h$ by $p$. 
			\item Given two fibrations $p \colon E \rightarrow B$ and $p' \colon E \rightarrow B$ consider their product 
			$$ p \times p' \colon E \times E' \rightarrow B \times B.'$$ Then it holds $$ \secat(p \times p') \leq \secat(p) + \secat(p'). $$
		\end{enumerate} 
		
	\end{theorem}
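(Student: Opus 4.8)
The plan is to derive all four items from the open-cover definition of sectional category together with the homotopy lifting property (HLP) of a fibration, using relative cohomology only for the lower bound in (2). Throughout I write $\secat(p)=n$ to mean that $B$ carries an open cover $U_0,\dots,U_n$ equipped with continuous maps $s_i\colon U_i\to E$ and homotopies $p\circ s_i\simeq \iota_{U_i}$, where $\iota_{U_i}\colon U_i\hookrightarrow B$ is the inclusion. The only genuinely delicate point will be the product inequality (4); the rest is bookkeeping around the HLP.

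For (1), assume $\cat(B)=n$ and choose an open cover $U_0,\dots,U_n$ on which each $\iota_{U_i}$ is null-homotopic, say $\iota_{U_i}\simeq \mathrm{const}_{b_0}$. Picking $e_0\in p^{-1}(b_0)$, the constant map $\mathrm{const}_{e_0}$ is a lift of the endpoint of this null-homotopy, and applying the HLP (after reversing the parameter) produces $s_i\colon U_i\to E$ with $p\circ s_i=\iota_{U_i}$. Thus every categorical cover is a sectional cover and $\secat(p)\le\cat(B)$. For (3), start from a sectional cover $U_0,\dots,U_n$ of $B$ with sections $s_i$ and set $V_i=f^{-1}(U_i)$, an open cover of $X$. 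The composite $s_i\circ(f|_{V_i})\colon V_i\to E$ covers $f|_{V_i}$ up to homotopy, so the HLP rigidifies it to an actual lift $g_i\colon V_i\to E$ of $f|_{V_i}$; the pair $(\iota_{V_i},g_i)$ is then a genuine section of the pullback fibration $f^*p$ over $V_i$, giving $\secat(f^*p)\le\secat(p)$. The ``in particular'' clause follows since pulling back along homotopic maps yields fiber-homotopy-equivalent fibrations: for a homotopy equivalence $h$ with inverse $h'$ one gets $\secat(p)=\secat((hh')^*p)=\secat\big(h'^*(h^*p)\big)\le\secat(h^*p)\le\secat(p)$, forcing equality.

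For (2), the crucial observation is that a local section annihilates kernel classes upon restriction: from $p\circ s_i\simeq\iota_{U_i}$ one gets $\iota_{U_i}^*=s_i^*\circ p^*$ on cohomology, so any $u\in\ker p^*$ restricts to $0$ on each $U_i$ and therefore lifts to a relative class $\bar u\in H^*(B,U_i;R)$. Now suppose, for contradiction, that $\secat(p)\le k-1$, i.e. $B$ is covered by $k$ sectional sets $U_0,\dots,U_{k-1}$. Lifting $u_j$ to $\bar u_j\in H^*(B,U_{j-1};R)$ and forming the relative cup product gives a class in $H^*\big(B,\,U_0\cup\cdots\cup U_{k-1};R\big)=H^*(B,B;R)=0$ that maps to $u_1\cup\cdots\cup u_k$ under the natural map to $H^*(B;R)$. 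This forces $u_1\cup\cdots\cup u_k=0$, contradicting the hypothesis; hence $\secat(p)\ge k$.

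For (4), I would take sectional covers $U_0,\dots,U_m$ of $B$ (with $m=\secat(p)$) and $U_0',\dots,U_n'$ of $B'$ (with $n=\secat(p')$) and group the product cover into diagonal bands $W_\ell=\bigcup_{i+j=\ell}U_i\times U_j'$ for $\ell=0,\dots,m+n$, which form an open cover of $B\times B'$ by $m+n+1$ sets. Over each factor $U_i\times U_j'$ the product $s_i\times s_j'$ sections $p\times p'$, but the pieces inside a single band $W_\ell$ overlap, so they do not obviously assemble into one section of $W_\ell$. This gluing is exactly the main obstacle, and I expect it to be the only nontrivial step. I would resolve it either by the standard shrinking/separation argument that makes the same-band pieces disjoint, or, more cleanly, by invoking Schwarz's fiberwise-join characterization: $\secat(p)\le m$ iff the $(m+1)$-fold fiberwise join $p^{*(m+1)}$ admits a section, and the join of sections of $p^{*(m+1)}$ and $p'^{*(n+1)}$ produces a section of $(p\times p')^{*(m+n+1)}$, yielding $\secat(p\times p')\le m+n$. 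The remaining items require no machinery beyond the HLP and elementary relative-cohomology bookkeeping.
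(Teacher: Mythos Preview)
Your proof sketch is correct and follows the standard arguments. Note, however, that the paper does not actually prove this theorem: it is stated as a summary of classical properties of sectional category, with the sentence ``For details on the proofs, we refer the interested reader to the seminal paper on the topic by Schwarz, \cite{Schwarz66}'' immediately preceding it. So there is no in-paper proof to compare against; the arguments you give for (1)--(3) are exactly the classical ones, and for (4) both your proposed resolutions (the shrinking/separation trick and Schwarz's fiberwise-join characterization) are the standard ways to handle the overlap issue in the diagonal bands.
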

	
	Recall that, given a topological space $X$, we define the path space of $X$, denoted by $PX$, as the space of continuous maps $\gamma \colon I \rightarrow X$, where $I$ here denotes the unit interval in $\RR$. For any element $x \in X$, denote by $c_x$ the constant map in $x$, i.e. $c_x(t) = x$ for all $t \in I$. Denote by $P_* X$ the based path space, which is defined as the restriction of $PX$ to paths starting at an a priori fixed point $x_0 \in X$. Define the fibration $$ \begin{tikzcd}[row sep=0pt,column sep=1pc]
		\ev_1 \colon P_* X \arrow{r} & X  \\
		{\hphantom{(\gamma_0) \colon{}}} 
		\gamma \arrow[mapsto]{r} &  \gamma(1).
	\end{tikzcd} $$  Then, $\secat(\ev_1)$ becomes the well-known \textit{Lusternik-Schnirelmann category} of $X$, denoted by $\cat(X)$. 
	
	Conversely, if the fibration corresponds to the Serre path space fibration, i.e. $$ \begin{tikzcd}[row sep=0pt,column sep=1pc]
		\pi \colon PX \arrow{r} & X \times X \\
		{\hphantom{(\gamma_0) \colon{}}} 
		\gamma \arrow[mapsto]{r} &  (\gamma(0),\gamma(1))
	\end{tikzcd} $$ then $\secat(f)$ coincides with the \textit{topological complexity} of $X$, denoted by $\tc(X)$. \bigskip
	
	We will devote this section of the article to provide a quick review of our main notion of interest, that of effective topological complexity. As such, we recall both its construction and the most useful properties that were first proved in \cite{BlKa2}.
	
	Given a topological group $G$ acting on a pointed CW-complex $X$, and $k \geq 1$ an integer, define the \textit{$k$-broken path space} by  $$ \mathcal{P}_k(X) = \{ (\gamma_1, \cdots, \gamma_k) \in (PX)^k \mid G\gamma_i(1) = G \gamma_{i+1}(0) \mbox{ for } 1 \leq i \leq k \}. $$ In particular, for stages one and two we have the obvious equalities $$\mathcal{P}_1(X) = PX \qquad \mathcal{P}_2(X) = PX \times_{X/G} PX.$$ Denote by $\rho_X : X \rightarrow X/G$ the projection of $X$ onto its orbit space, and by $\delta_X : \daleth(X) \rightarrow X \times X$ the inclusion of the saturated diagonal into $X \times X$. Recall that the saturated diagonal corresponds with the subset $$ \daleth(X) = \{ (g_1 x, g_2 x) \in X \times X \mid g_1, g_2 \in G \mbox{ and } x \in X \}. $$ In section 7 we will see another characterization of the saturated diagonal as a union of ``slices'' indexed by the elements of the group. 
	
	Now we need to define the generalized path space fibrations that encapsulate the desired information about the symmetries in the configuration space. Define the map $ \pi_k : \mathcal{P}_k(X) \rightarrow X \times X $ by $$ \pi_k(\gamma_1, \cdots, \gamma_k) = (\gamma_1(0), \gamma_k(1)).$$ Indeed, this can be seen as a fibration in the following manner: If $\pi_1 :  PX \rightarrow X \times X$ denotes the path space fibration which sends every path to its endpoints, consider the restriction of the fibration $(\pi_1)^k$ to the subspace $X \times \daleth(X)^{k-1} \times X \subseteq (X \times X)^k$. The outcome of this is a fibration $$ p_k : \mathcal{P}_k \rightarrow X \times \daleth(X)^{k-1} \times X $$  fitting to a pullback diagram $$ \begin{tikzcd}
		\mathcal{P}_k(X) \arrow[rrr] \arrow[d, swap, "p_k"] & & & (PX)^k \arrow[d,"(\pi_1)^k"] \\
		X \times \daleth^{k-1} \times X \arrow[rrr, "\text{Id}_X \times (\delta_X)^{k-1} \times \text{Id}_X"] & & & (X \times X)^k.
	\end{tikzcd}$$ Composing $p_k$ with the projection onto the first and last factor, we obtain $\pi_k$. 
	
	\begin{definition}
		With the notation above, define \begin{itemize}
			\item A $(G,k)$-motion planner on an open subset $U \subset X \times X$ is defined as a continuous local section of $\pi_k$ over $U$, that is, a map $s\colon U \rightarrow \mathcal{P}_k(X)$ such that $\pi_k \circ s \simeq \id_U$.
			\item The $k$-stage effective topological complexity $\tc^{G,k}(X)$ as the smallest integer $n \geq 0$ such that there exists an open cover of $X \times X$ by $n+1$ sets admiting $(G,k)$-motion planners. Equivalently, $$ \tc^{G,k}(X) = \secat(\pi_k). $$ 
		\end{itemize} 
	\end{definition}
	
	The following lemma condenses some of the most basic properties of $\tc^{G,k}(X)$ introduced in \cite{BlKa2}: 
	
	\begin{lemma}[{\cite[Lemma 3.2, Theorem 3.3]{BlKa2}}] \label{LemmaMonotEffTC}
		
		$\tc^{G,k}(X)$ satisfies the following properties:
		
		\begin{enumerate}[(1)]
			\item The following inequalities hold for any $k \geq 1$ and any subgroup $H \leq G$: 
			\begin{enumerate}
				\item $\tc^{G,k}(X) \leq \tc^{H,k}(X)$.
				\item $\tc^{G, k+1}(X) \leq \tc^{G,k}(X)$.
			\end{enumerate}
			\item If there exists a $G$-map $f : X \rightarrow Y$ and a map $g : Y \rightarrow X$ such that $f \circ g \simeq \id_Y$ then $$ \tc^{G,k}(Y) \leq \tc^{G,k}(X).$$ In particular, if $X$ and $Y$ are $G$-homotopically equivalent, then $$\tc^{G,k}(X) = \tc^{G,k}(Y).$$
		
		\end{enumerate}
	\end{lemma}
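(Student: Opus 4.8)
The plan is to deduce every inequality in the lemma from a single elementary observation about sectional category: if $p\colon E\to B$ and $p'\colon E'\to B$ are maps to a common base and $\phi\colon E'\to E$ is continuous with $p\circ\phi=p'$, then postcomposing any local section of $p'$ over an open set $U$ with $\phi$ produces a local section of $p$ over $U$, so $\secat(p)\le\secat(p')$. Since $\tc^{G,k}(X)=\secat(\pi_k)$, it suffices in each case to exhibit the appropriate comparison map between broken path fibrations over $X\times X$ (or, for (2), a variant handling a change of base).

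For (1)(a) I would use that $H\le G$ implies $Hx=Hy\Rightarrow Gx=Gy$, so every gluing condition for the $H$-action is also one for the $G$-action; hence $\mathcal{P}_k^{H}(X)\subseteq\mathcal{P}_k^{G}(X)$, and the inclusion $j$ satisfies $\pi_k^{G}\circ j=\pi_k^{H}$ as maps to $X\times X$. The principle above, with $p=\pi_k^{G}$, $p'=\pi_k^{H}$ and $\phi=j$, gives $\tc^{G,k}(X)\le\tc^{H,k}(X)$. For (1)(b) I would embed a $k$-broken path into a $(k{+}1)$-broken one by appending a constant path, $\iota(\gamma_1,\dots,\gamma_k)=(\gamma_1,\dots,\gamma_k,c_{\gamma_k(1)})$; the extra gluing condition $G\gamma_k(1)=Gc_{\gamma_k(1)}(0)$ holds trivially, $\iota$ is continuous, and $\pi_{k+1}\circ\iota=\pi_k$ since the pair of endpoints is unchanged. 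Applying the principle with $\phi=\iota$ yields $\tc^{G,k+1}(X)\le\tc^{G,k}(X)$.

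For (2) the base space changes, so I would argue directly with covers. Because $f$ is a $G$-map it carries $G$-orbits onto $G$-orbits, hence induces a continuous map $\mathcal{P}_k(f)\colon\mathcal{P}_k(X)\to\mathcal{P}_k(Y)$, $(\gamma_1,\dots,\gamma_k)\mapsto(f\gamma_1,\dots,f\gamma_k)$, well defined precisely because $G\gamma_i(1)=G\gamma_{i+1}(0)$ forces $G(f\gamma_i)(1)=G(f\gamma_{i+1})(0)$; moreover $\pi_k^{Y}\circ\mathcal{P}_k(f)=(f\times f)\circ\pi_k^{X}$. Given an open cover $\{U_i\}$ of $X\times X$ with $(G,k)$-motion planners $s_i$, set $V_i=(g\times g)^{-1}(U_i)$, which cover $Y\times Y$, and over each $V_i$ define $\sigma_i=\mathcal{P}_k(f)\circ s_i\circ(g\times g)$. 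Using the identity above, the fact that $\pi_k^{X}\circ s_i$ is homotopic to the inclusion of $U_i$, and $fg\simeq\id_Y$, one computes $\pi_k^{Y}\circ\sigma_i\simeq(f\times f)\circ(g\times g)=(fg)\times(fg)\simeq\id$ on $V_i$, so the $\sigma_i$ are $(G,k)$-motion planners and $\tc^{G,k}(Y)\le\tc^{G,k}(X)$. The \q{in particular} clause follows by running this twice: a $G$-homotopy equivalence supplies $G$-maps $f,f'$ that are inverse up to equivariant, hence ordinary, homotopy, and applying the inequality with the roles of $X$ and $Y$ interchanged gives the reverse bound, whence $\tc^{G,k}(X)=\tc^{G,k}(Y)$.

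The one genuine subtlety, and the step I expect to need the most care, is the asymmetry in (2): $g$ is not assumed equivariant, so it induces neither a map of orbit spaces nor a map of broken path spaces and cannot play the role $f$ does. The resolution is to let $g$ act only on the base, through $g\times g$ on the pair of endpoints, while the equivariant map $f$ does all the work on the total space via $\mathcal{P}_k(f)$. Everything else is routine verification: continuity of $\mathcal{P}_k(f)$ and of the insertion $\iota$, and the bookkeeping that chains the homotopies $\pi_k^{X}\circ s_i\simeq\text{incl}$ and $fg\simeq\id_Y$ continuously over each $V_i$.
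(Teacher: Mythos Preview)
Your argument is correct and is the standard one; there is nothing to compare against here, since the paper does not prove this lemma but merely quotes it from \cite[Lemma 3.2, Theorem 3.3]{BlKa2}. Your use of the elementary comparison principle for sectional category, together with the inclusion $\mathcal{P}_k^H(X)\subseteq\mathcal{P}_k^G(X)$, the constant-path insertion $\iota_k$, and the induced map $\mathcal{P}_k(f)$ paired with $g\times g$ on the base, is exactly how the original source proceeds.
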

	
	\begin{definition}
		Let $k_0 \geq 1$ be the minimal integer such that $\tc^{G,k}(X) = \tc^{G, k+1}(X)$ for every integer $k \geq k_0$. We define the \textit{effective topological complexity} of $X$ as $$\tc^{G, \infty}(X) = \tc^{G, k_0}(X).$$ 
	\end{definition}
	
	One of the main focus of the original paper from B\l{}aszczyk and Kaluba revolves around providing a full study of the effective topological complexity of $\mathbb{Z}_p$ spheres of any dimension. As bedrock examples that come in handy in many situations, we summarize here the classification provided in \cite{BlKa2}:
	
	\begin{theorem}[{\cite[ Corollary 5.10]{BlKa2}}] \label{EffSpheres}
		For any $p$ prime, suppose $\ZZ_p$ acting on $S^n$ with an $r$-dimensional fixed point set, for $-1 \leq r \leq n-1$ ($r=-1$ meaning free action). 
		
		\begin{itemize}
			\item If $p>2$ then $\tc^{\ZZ_p, \infty}(S^n) = \tc(S^n) = \begin{cases}
				1 & \text{if } $n$ \text{ is odd} \\
				2 & \text{if } $n$ \text{ is even, } n>0.
			\end{cases}$
			
			\item If $p = 2$, then $\tc^{\ZZ_2, \infty}(S^n)$ depends on $r$ as indicated in the following table: \bigskip
			
			\begin{center}
				%	\begin{minipage}{0.72\textwidth}
					\begin{tabular} {@{}ccc@{}}\toprule
						& \textsc{O. preserving} & \textsc{O. reversing}\\ \midrule
						$r = -1$ & \multicolumn{2}{c}{$\;\;1$}\\ \cmidrule(rl){1-3}
						\multirow{4}{*}{$0\leq  r \leq  n-2$}
						& \multicolumn{2}{c@{}}{\centering$1$}\\
						& \multicolumn{2}{c@{}}{\footnotesize{for $n$  odd}}\\\cmidrule(lr){2-3}
						& $2$ & $1$\\
						& \footnotesize{for $n$  even} & \footnotesize{for $n$  even, if linear}\\ \cmidrule(rl){1-3}
						\multirow{2}{*}{$r=n-1$}& --- & $0$ \\
						& \footnotesize{not possible}&\footnotesize{if linear}\\\bottomrule
					\end{tabular}
					%	\end{minipage}
			\end{center} \bigskip
			
			where recall that a group $G$ acts linearly on a $n$-dimensional sphere $S^n$ if there exists a real vector space $V$ of dimension $n+1$, with $S^n$ seen as the unit sphere of $V$, and with a linear action of $G$ such that $S^n$ is a $G$-invariant subspace of $V$.  
			
		\end{itemize}
		
	\end{theorem}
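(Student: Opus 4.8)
The plan is to bound $\tc^{\ZZ_p,\infty}(S^n)$ from above and below by quantities readable off the orbit space $X/G=S^n/\ZZ_p$ and the saturated diagonal, organizing the computation by the parity of $n$ and the action of $\ZZ_p$ on $H^n(S^n)$. For the upper bound I would use only monotonicity together with one structural identification: by Lemma \ref{LemmaMonotEffTC} we have $\tc^{G,\infty}(X)\le\tc^{G,2}(X)$, and I would first prove $\tc^{G,2}(X)=\secat\big(\delta_X\colon\daleth(X)\hookrightarrow X\times X\big)$. Collapsing each of the two paths of a point of $\mathcal P_2(X)$ to the constant path at its matched endpoint defines a homotopy equivalence $\phi\colon\mathcal P_2(X)\to\daleth(X)$, $\phi(\gamma_1,\gamma_2)=(\gamma_1(1),\gamma_2(0))$, and the homotopy $H_t(\gamma_1,\gamma_2)=(\gamma_1(1-t),\gamma_2(t))$ shows $\pi_2\simeq\delta_X\circ\phi$; since $\secat$ is invariant under precomposition with a homotopy equivalence, the identity follows. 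Every upper bound then reduces to covering $S^n\times S^n$ by open sets admitting homotopy sections into $\daleth(S^n)$.

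For the lower bounds I would isolate two mechanisms that persist through all stages $k$ and hence bound the limit. First, the cited inequalities $\cat(\rho_X)\le\cat^{G,\infty}(X)\le\tc^{G,\infty}(X)$ (Theorems \ref{Catmap} and \ref{CatTCEffIneq}) give $\tc^{G,\infty}(S^n)\ge1$ whenever $\rho_X$ admits no homotopy section; for $p=2$ this is detected with $\FF_2$-coefficients by $H^1(X/G;\FF_2)\neq0$, which holds exactly when $n-r\ge2$ because the free part of the action retracts onto an antipodal sphere with quotient $\mathbb{RP}^{\,n-r-1}$ carrying a degree-one class that $\rho_X^{*}$ must annihilate. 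Second, the matching condition $G\gamma_i(1)=G\gamma_{i+1}(0)$ makes the concatenation of the paths $\rho_X\gamma_i$ a genuine path, giving a map $\mathcal P_k(X)\to P(X/G)$ over $\rho_X\times\rho_X$, so local sections of $\pi_k$ produce local sections of the pullback fibration and
\[
\tc^{G,k}(X)\ \ge\ \secat\big((\rho_X\times\rho_X)^{*}\pi^{X/G}\big)\qquad\text{for every }k.
\]
I would evaluate the right-hand side rationally, where the transfer makes $\rho_X^{*}$ injective: when the action preserves orientation $X/G$ is a rational homology $n$-sphere, the pullback of its fundamental zero-divisor equals $c(a-b)$ with $c\neq0$, and its cup-length ($1$ for $n$ odd, $2$ for $n$ even) forces $\tc^{G,\infty}(S^n)\ge\tc(S^n)$.

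These two mechanisms settle most of the table. For $p$ odd every action preserves orientation, so the rational bound gives $\tc^{\ZZ_p,\infty}(S^n)\ge\tc(S^n)$, matching the monotonicity upper bound. The same rational bound handles $p=2$ with $n$ even and orientation-preserving, giving the value $2$. The reflection case $r=n-1$ is orientation-reversing with $X/G\cong D^n$ contractible; here $\rho_X$ has the closed hemisphere as a strict section, so Theorem \ref{EfOrbSect} yields $\tc^{G,\infty}(S^n)=\tc(D^n)=0$. The free antipodal case is settled by computing $\secat(\delta_X)$ directly: $\daleth(S^n)=\Delta\sqcup\Gamma$ is a disjoint pair of spheres, the complements of the two cut loci give a two-set cover with sections into $\Delta$ and into $\Gamma$, and disconnectedness of $\daleth$ forbids a global section, so the value is $1$.

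The hard part will be the orientation-reversing case with $n$ even and $0\le r\le n-2$, where I must show $\tc^{G,\infty}(S^n)=1$. The lower bound $\ge1$ is the $\FF_2$-argument above (here $n-r\ge2$), but the upper bound is delicate precisely because $\rho_X$ is then neither a fibration nor admits a section, so neither Theorem \ref{EffOrbFibr} nor Theorem \ref{EfOrbSect} applies and one is thrown back on proving $\secat(\delta_X)\le1$. Now $\daleth(S^n)=\Delta\cup_{S^r}\Gamma$ is two copies of $S^n$ glued along the fixed sphere $S^r$, and the obstruction concentrates on the fixed locus: away from $S^r$ the orientation-reversing jump lets one trade a target $y$ for $gy$ and so annihilate the squared zero-divisor responsible for $\tc(S^n)=2$, whereas on the diagonal of $S^r$ the action gives no help and one is doing ordinary motion planning on $S^r$ — but $n-r$ odd with $n$ even forces $r$ odd, so $\tc(S^r)=1$. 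I would therefore build the two required open sets by stitching a minimal ($\tc(S^r)=1$) cover of a neighbourhood of $\Delta(S^r)$ to the symmetry-reduced cover of the free part. Making this gluing continuous across the fixed sphere, and verifying that the resulting sections genuinely land in $\daleth(S^n)$, is the main technical obstacle of the entire classification.
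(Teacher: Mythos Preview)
This theorem is not proved in the present paper at all: it is quoted as \cite[Corollary~5.10]{BlKa2} and used as input. There is therefore no ``paper's own proof'' to compare against; the argument in \cite{BlKa2} proceeds by explicit construction of broken motion planners together with case-by-case lower bounds, and of course does not have access to Theorems~\ref{CatTCEffIneq}, \ref{EfOrbSect}, Proposition~\ref{Catmap} or Lemma~\ref{HomotP2Daleth}, all of which you invoke. Your use of these later results is not logically circular, since none of them depends on Theorem~\ref{EffSpheres}, but you should be aware that what you are writing is a reproof using the machinery of this paper, not a reconstruction of the cited one.

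That said, your outline contains a genuine gap. The claim that $H^1(S^n/\ZZ_2;\FF_2)\neq0$ whenever $n-r\ge2$ is false. Two linear counterexamples: for $n=2$, $r=0$ with the rotation $(x,y,z)\mapsto(-x,-y,z)$ the quotient is topologically $S^2$, and for $n=3$, $r=0$ with the orientation-reversing involution $(x_1,x_2,x_3,x_4)\mapsto(-x_1,-x_2,-x_3,x_4)$ the quotient is $\Sigma\mathbb{RP}^2$; in both cases $H^1(\,\cdot\,;\FF_2)=0$. The degree-one class you locate on the free part $\mathbb{RP}^{\,n-r-1}$ does not survive to $H^1$ of the full quotient once the fixed sphere is coned back in. In the orientation-preserving rows this does no damage, because your rational transfer mechanism already supplies the required lower bound; but in the row ``$n$ odd, orientation-reversing, $0\le r\le n-2$'' the rational argument is unavailable (the involution kills $H^n(\,\cdot\,;\mathbb{Q})$ of the quotient) and the $\FF_2$ argument is the only lower bound you offer. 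You need a different obstruction to $\rho_X$ being null-homotopic there---for instance via $\rho_X^*$ on top-degree $\FF_2$-cohomology, or via $\pi_n$---before that entry of the table is established.
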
 \medskip

\begin{remark}
	In\cite{Gonzalez21} N. Cadavid-Aguilar, J. González, B. Gutiérrez and C. Ipanaque-Zapata introduced a tweaked and alternative version of effective topological complexity, different to the original one defined above. They defined a variant of the $k$-broken path space, $Q_k^G(X)$, as the subspace of $(PX \times G)^{k-1} \times PX$ consisting of the tuples $(\alpha_1,g_1,\cdots, \alpha_{k-1},g_{k-1},\alpha_k)$ such that  $\alpha_i(1)\cdot g_i = \alpha_{i+1}(0)$. Defining the $G$-twisted evaluation map 	$$	\begin{tikzcd}[row sep=0pt,column sep=1pc]
		\epsilon_k \colon Q^G_{k} \arrow{r} & X \times X \\
		{\hphantom{(\gamma_0) \colon{}}} (\alpha_1,g_1,\cdots,g_{k-1},\alpha_k) \arrow[mapsto]{r} &  (\alpha_1(0),\alpha_k(1))
	\end{tikzcd} $$ their alternative notion of $k$-effective topological complexity is defined as $$\tc^{G,k}_{\text{effv}}(X) = \secat(\epsilon_k). $$ As it becomes apparent at first glance, the space $Q_k^G(X)$ is designed to encode the precise leaping that assembles a broken path, and this constitutes the main difference with the original notion. Indeed, $$\tc_{\text{effv}}^{G,k}(X) = \tc_{\text{effv}}^{G,k+1}(X) $$ for every $k \geq 2$ and moreover \begin{equation}\label{TwoeffectivesTC}
		\tc^{G,k}(X) \leq \tc_{\text{effv}}^{G,k}(X). 
	\end{equation}    The main advantage of this approach lies in its significant simplification with respect to the original definition. However, while conceptually interesting on its own, this alternative vision of effective $\tc$ does not adress the foundational idea of reducing to the minimum possible the complexity of the motion planning problem through the use of its symmetries, save for nice enough cases, as inequality \ref{TwoeffectivesTC} shows. In fact, this simplification might be a tad excessive for some purposes, as the original notion of effective $\tc$ may fall non-trivially below stage $2$ (we illustrate a basic example later on, see Proposition \ref{ExSeqSphInvol}). Consequently, we think that the further development of the original notion continues to be a worthwile enterprise.  
	
	It is also interesting to remark that, based upon this alternative notion of effective topological complexity, Balzer and Torres-Giese introduced in \cite{BalzerGiese21} the sequential version of effective $\tc$ in the sense of \cite{Gonzalez21}, which coincides, at stage two, with $\tc^G_{\text{effv}}$. 
	
\end{remark}

	\section{\centering The global effective path space} \label{Section2}

%	\subsection{Effective $\tc$ and Moore paths}

We will start our study on the properties of the effective topological complexity by taking a look in this section at the broken path spaces themselves. In particular, we define a notion of ``final" or global effective path space, encompassing the information of all the broken path spaces for each stage $k \geq 0$.

Consider, for each integer $k \geq 0$, the inclusion $\PP_k(X) \xhookrightarrow{\iota_k} \PP_{k+1}(X)$ defined by $$ \iota_0(x) = c_x \in PX $$ and, for every $k > 0$ $$ \iota_k((\gamma_1, \cdots, \gamma_k)) = (\gamma_1, \cdots, \gamma_k, c_{\gamma_k(1)}) \in \PP_{k+1}(X). $$ As one could easily expect from the definition, these inclusions are well behaved. 

\begin{lemma}
	For each integer $k \geq 0$, the inclusion $\PP_k(X) \xhookrightarrow{\iota_k} \PP_{k+1}(X)$ embeds the broken path space $\PP_k(X)$ as a closed subspace of $\PP_{k+1}(X)$. 
\end{lemma}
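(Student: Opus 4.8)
The plan is to verify two things separately: first, that $\iota_k$ is a topological embedding (i.e. a homeomorphism onto its image with the subspace topology), and second, that the image is closed in $\PP_{k+1}(X)$. For the embedding statement, I would argue that $\iota_k$ is continuous because each coordinate is either the identity on a $PX$-factor or the composition $\gamma_k \mapsto \gamma_k(1) \mapsto c_{\gamma_k(1)}$, and both the evaluation $\ev_1\colon PX \to X$ and the ``constant path'' map $X \to PX$, $x \mapsto c_x$, are continuous for the compact-open topology; the latter is continuous since $X \to PX \to X$ (evaluation at any point) recovers the identity, and more directly one checks the preimage of a subbasic set $\{\gamma : \gamma(K)\subseteq V\}$ is just $V$. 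Continuity of the inverse on the image is automatic once we know the image is characterized by the vanishing/constancy of the last coordinate, since projecting $\PP_{k+1}(X)$ onto its first $k$ path-coordinates is continuous and restricts to a continuous inverse of $\iota_k$ on $\im \iota_k$.

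For $k=0$ the map $\iota_0\colon X \to PX$, $x \mapsto c_x$, has image the subspace of constant paths, and its inverse is $\ev_0$ (or $\ev_1$), which is continuous; so $\iota_0$ is an embedding, and I must then check the constant paths form a closed subset of $PX$. For $k > 0$, the key observation is that $\im \iota_k = \{(\gamma_1,\dots,\gamma_{k+1}) \in \PP_{k+1}(X) : \gamma_{k+1} = c_{\gamma_{k+1}(0)}\}$; here one should note that the broken-path compatibility condition $G\gamma_k(1) = G\gamma_{k+1}(0)$ is automatically satisfied for any tuple in the image (it is inherited from the $\PP_k(X)$ condition together with $\gamma_{k+1}(0) = \gamma_k(1)$), so the image really is cut out inside $\PP_{k+1}(X)$ purely by the condition that the last path is constant.

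The main point — and the only place requiring a small argument — is therefore that the set $C = \{\gamma \in PX : \gamma \text{ is constant}\}$ is closed in $PX$ with the compact-open topology, and more generally that the corresponding condition on the last coordinate defines a closed subset of $\PP_{k+1}(X)$. I would prove $C$ is closed by exhibiting it as an equalizer: $C = \{\gamma : \ev_0(\gamma) = \ev_1(\gamma)\}$ is not quite enough, but $C = \{\gamma : \ev_t(\gamma) = \ev_0(\gamma) \text{ for all } t \in I\}$, i.e. the equalizer of the two continuous maps $PX \to \Map(I,X)$ given by $\gamma \mapsto (t \mapsto \gamma(t))$ and $\gamma \mapsto (t \mapsto \gamma(0))$ — but the cleanest route uses that $X$ is a CW-complex, hence Hausdorff, so $\Map(I,X)=PX$ is Hausdorff, and $C$ is the equalizer of the identity $PX \to PX$ with the continuous self-map $r\colon PX \to PX$, $r(\gamma) = c_{\gamma(0)}$ (which is continuous as the composite of $\ev_0$ and $x\mapsto c_x$); the equalizer of two continuous maps into a Hausdorff space is closed. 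Pulling this back along the continuous projection $\PP_{k+1}(X) \to PX$ onto the last coordinate shows $\im\iota_k$ is closed in $\PP_{k+1}(X)$, completing the proof. The only mild obstacle is remembering to invoke Hausdorffness of $PX$ (valid since we assumed $X$ is a CW-complex, hence Hausdorff, and mapping spaces out of a compact Hausdorff space into a Hausdorff space are Hausdorff in the compact-open topology) rather than trying to argue closedness by hand with subbasic open sets.
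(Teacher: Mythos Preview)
Your overall strategy matches the paper's: argue closedness by projecting onto the last path coordinate and showing that the constant paths form a closed subset of $PX$. However, there is a genuine gap in your characterization of the image. You claim
\[
\im\iota_k = \{(\gamma_1,\dots,\gamma_{k+1}) \in \PP_{k+1}(X) : \gamma_{k+1}\text{ is constant}\},
\]
but this is false whenever the $G$-action is nontrivial: if $(\gamma_1,\dots,\gamma_k) \in \PP_k(X)$ and $g\gamma_k(1) \neq \gamma_k(1)$ for some $g \in G$, then $(\gamma_1,\dots,\gamma_k, c_{g\gamma_k(1)})$ lies in $\PP_{k+1}(X)$ and has constant last coordinate, yet it is \emph{not} in $\im\iota_k$, since $\iota_k$ forces $\gamma_{k+1}(0) = \gamma_k(1)$ on the nose, whereas membership in $\PP_{k+1}(X)$ only requires $G\gamma_{k+1}(0) = G\gamma_k(1)$. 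The correct description is
\[
\im\iota_k = \{(\gamma_1,\dots,\gamma_{k+1}) \in \PP_{k+1}(X) : \gamma_{k+1}\text{ constant and } \gamma_{k+1}(0) = \gamma_k(1)\},
\]
which is the intersection of your closed set with the preimage of the diagonal of $X$ under the continuous map $(\gamma_1,\dots,\gamma_{k+1}) \mapsto (\gamma_k(1), \gamma_{k+1}(0))$; since $X$ is Hausdorff this second set is also closed, and with this correction the rest of your argument goes through unchanged. (The paper's proof contains the same slip, asserting that the image equals $\varphi_{k+1}^{-1}(i(X))$.)

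On a separate point, your equalizer argument for closedness of the constant-path locus $C \subset PX$ --- that $C$ is the equalizer of $\id_{PX}$ and $\gamma \mapsto c_{\gamma(0)}$, hence closed because $PX$ is Hausdorff --- is cleaner and more general than the paper's, which invokes compactness of $X$ (so that $i(X)$ is compact in the Hausdorff space $PX$, hence closed) even though compactness is not among the paper's standing hypotheses. Your treatment of the embedding itself (continuity of $\iota_k$ and of its inverse via projection onto the first $k$ coordinates) is also more explicit than the paper's, which addresses only closedness.
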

\begin{proof}
The case $k = 0$ is straightforward. For any $k \geq 1$ define the projection map that sends any broken path in $\PP_{k+1}(X)$ to its last component as a path in $PX$, i.e. $$\varphi_{k+1} \colon \PP_{k+1}(X) \rightarrow PX \qquad \varphi_{k+1}(\gamma_1, \cdots, \gamma_{k+1}) = \gamma_{k+1}. $$ It is immediate to check that this is a well defined continuous map, and such that $$ \phi_{k+1} =  p_{k+1_{\vert_{\PP_{k+1}(X)}}} $$ where $p_{k+1} \colon (PX)^{k+1} \rightarrow PX$ is just the obvious projection map into the $(k+1)$-coordinate.
%	$$	\begin{tikzcd}
%		(PX)^{k+1} \arrow[rd, "p_{k+1}"] \\
%		\PP_{k+1}(X) \arrow[r, swap, "\varphi_{k+1}"] \arrow[hook]{u} & PX 
%	\end{tikzcd} $$ where $p_{k+1}$ is the obvious projection map into the $(k+1)$-coordinate, and the inclusion $\PP_{k+1}(X) \hookrightarrow (PX)^{k+1}$ is just the usual one as a subspace.
Now, consider the obvious inclusion $$ i \colon X \rightarrow PX \qquad x \mapsto c_x.$$ Given that $X$ is taken to be a Hausdorff space, $PX$ is Hausdorff as well and, as we assumed $X$ to be compact, $i(X)$ is compact in $PX$ and, therefore, closed. Observe that $$ \iota_{k}(\PP_{k-1}(X)) = \varphi_{k+1}^{-1}(i(X))$$ and thus the claim follows from the continuity of $\varphi_{k+1}$.
\end{proof}

\begin{definition}
 We define the \textit{global effective path space}, denoted by $\PP_{\infty}(X)$ as  $\colim \PP_k(X)$ with respect to the chain of inclusions $$ X \xhookrightarrow{\iota_0} PX \xhookrightarrow{\iota_1} \PP_2(X) \xhookrightarrow{\iota_2} \cdots \PP_k(X) \xhookrightarrow{\iota_k} \PP_{k+1}(X) \xhookrightarrow{\iota_{k+1}} \cdots $$ endowed with the final (colimit) topology. 
\end{definition}

It is clear that we can visualize the building at each stage of the global effective path space through the chain of inclusions considered above as a sort of \q{cellular attachment}, in the following sense: for each integer $n \geq 0$, the broken path space at stage $n$ can be seen as fitting the pushout diagram 
$$ \begin{tikzcd}
	\overline{\PP_{n-1}(X)} \ar[r, "\varphi_{n-1}"] \arrow[hook]{d} & \PP_{n-1}(X) \arrow[d, " \iota_{n-1}"] \\
	\PP_n(X) \setminus \PP^{\circ}_{n-1}(X) \ar[r] & \PP_n(X) 
\end{tikzcd} $$ where $\PP_{-1}(X) = \emptyset$ and obviously $ \PP_0(X) \setminus \PP^{\circ}_{-1}(X) = \PP_0(X) = X $.

Given that the global effective path space $\PP_{\infty}(X)$ is endowed with the weak (colimit) topology, any subset $U \subset \PP_{\infty}(X)$ is open (respectively closed) if and only if, for every $n \geq 0$, the intersection $U \cap \PP_n(X)$ is open (respectively closed). 

	\begin{proposition}
		Let $ S \colon F \rightarrow \PP_{\infty}(X) $ be a continuous map. If $F$ is both Hausdorff and compact, then $S$ factors through $\PP_n(X)$ for some integer $n$.
	\end{proposition}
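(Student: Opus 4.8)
The plan is to invoke the standard fact that a compact subset of a colimit (direct limit) of a sequence of closed inclusions of spaces — here $X = \PP_0(X) \hookrightarrow PX \hookrightarrow \PP_2(X) \hookrightarrow \cdots$, with each $\iota_k$ a closed embedding as established in the previous Lemma — is already contained in one of the terms of the sequence. Concretely, it suffices to prove that the image $S(F) \subseteq \PP_\infty(X)$ is contained in $\PP_n(X)$ for some $n$; then, since $\PP_n(X)$ carries the subspace topology inherited from the colimit (closed inclusions into a colimit are subspace embeddings), the corestriction $F \to \PP_n(X)$ is continuous and we are done.

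First I would argue by contradiction: suppose $S(F)$ is not contained in any $\PP_n(X)$. Then for every $n \geq 0$ we may pick a point $x_n \in F$ with $S(x_n) \in \PP_{k_n}(X) \setminus \PP_{k_n - 1}(X)$ where the indices $k_n$ are strictly increasing (after passing to a subsequence of stages, which exists precisely because $S(F)$ meets infinitely many of the "new" strata $\PP_k(X) \setminus \PP_{k-1}^\circ(X)$). Set $A = \{ S(x_n) : n \geq 0 \} \subseteq \PP_\infty(X)$. The key step is to show $A$ is closed in $\PP_\infty(X)$ and, more strongly, that every subset of $A$ is closed: for any subset $B \subseteq A$ and any stage $m$, the intersection $B \cap \PP_m(X)$ is finite — it is contained in $\{ S(x_n) : k_n \leq m \}$ — hence closed in $\PP_m(X)$ since $\PP_m(X)$ is Hausdorff (being a subspace of $(PX)^m$, which is Hausdorff as $X$ is). By the colimit topology, $B$ is closed in $\PP_\infty(X)$. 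Thus $A$ is a discrete closed subspace of $\PP_\infty(X)$.

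Now pull back: $S^{-1}(A)$ contains each $x_n$, and since $A$ is discrete with every subset closed, the preimages $S^{-1}(\{S(x_n)\})$ form a partition of a subspace of $F$ into relatively clopen pieces; in particular $\{x_n : n \geq 0\}$ maps to an infinite discrete closed subset of $\PP_\infty(X)$. But $F$ is compact, so $S(F)$ is compact, and an infinite closed discrete subset of a compact space is impossible — contradiction. Therefore $S(F) \subseteq \PP_n(X)$ for some $n$. Finally, to upgrade "image lands in $\PP_n(X)$" to "$S$ factors continuously through $\PP_n(X)$", I would use that $\PP_n(X) \hookrightarrow \PP_\infty(X)$ embeds $\PP_n(X)$ as a subspace (this follows from the inclusions being closed embeddings: for a closed $C \subseteq \PP_n(X)$, one checks $C$ is closed in every $\PP_m(X)$ — for $m \geq n$ because $\PP_n(X)$ is closed in $\PP_m(X)$ and $C$ is closed in it, for $m < n$ trivially — hence $C$ is closed in the colimit), so the set-theoretic factorization $F \to \PP_n(X)$ is automatically continuous.

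The main obstacle I anticipate is purely point-set-topological bookkeeping: making sure the colimit topology genuinely forces the "infinitely scattered" set $A$ to be closed and discrete, which relies essentially on Hausdorffness of each $\PP_m(X)$ (guaranteed by the standing assumption that $X$ is Hausdorff, in fact a CW-complex) and on the fact — recorded just before the proposition — that $U \subseteq \PP_\infty(X)$ is closed iff $U \cap \PP_m(X)$ is closed for all $m$. No compactness or metrizability of the $\PP_m(X)$ themselves is needed; only Hausdorffness, plus compactness of the \emph{source} $F$. One should also double-check the harmless subtlety that the selected stages $k_n$ can indeed be taken strictly increasing — if $S(F)$ were contained in $\PP_N(X)$ for some $N$ we would already be done, so the negation of the conclusion supplies points in arbitrarily high strata.
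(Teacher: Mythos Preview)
Your proof is correct and follows essentially the same route as the paper's own argument: both assume the image meets infinitely many strata, extract a sequence of points lying in strictly increasing stages, verify via the colimit topology and Hausdorffness of each $\PP_m(X)$ that this set (and every subset of it) is closed, hence discrete, and then derive a contradiction with compactness of $S(F)$. The only notable difference is that you take the extra care to justify that the set-theoretic factorization $F \to \PP_n(X)$ is continuous by checking $\PP_n(X)$ carries the subspace topology from the colimit --- a point the paper leaves implicit.
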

	\begin{proof}
		Take the image $S(F) \subset \PP_{\infty}(X)$ as a compact subset. Let us assume that the statement is false. As such, there exists an infinite subset of integers $\mathcal{J}$ such that the intersection $S(F) \bigcap ( \PP_k(X) \setminus \PP_{k-1}(X) )$ is non-empty for every $k \in \mathcal{J}$. Now, for each $k \in \mathcal{J}$, take exactly one distinguished element of such intersection $$ x_k \in  S(F) \bigcap (\PP_k(X) \setminus \PP_{k-1}(X) ) $$ which defines a sequence of integers $J = \{ x_k \}_{k \in \mathcal{J}}$ of infinite length, such that that $x_n \neq x_m$ if $n \neq m$ by its definition. 
		
		Given that $J \subset \PP_{\infty}(X)$ with induced topology, any subset of $J$ is open (alternatively closed) in $\PP_{\infty}(X)$ if and only if its intersection $J \cap \PP_k(X)$ is open/closed in $\PP_k(X)$ for each $k \geq 0$. Now, for each $ x_k \in J$ notice that $$ \{x_k\} \cap \PP_r(X) = \begin{cases}
			\emptyset & \text{if }  r < k \\
			\{x_k\} & \text{if }   r \geq k.
		\end{cases} $$ It is clear that $\PP_r(X)$ and $\PP_{\infty}(X)$ are Hausdorff spaces, given that $PX$ is Hausdorff, and $\PP_r(X)$ and $\PP_{\infty}(X)$ can be seen as a subspaces of (in)finite products of copies of $PX$. As such, the one point set $\{x_k \}$ is closed. For any $r \in \mathcal{J}$ the intersection $J \cap \PP_r(X) = \{ x_{k_i} \}_{k_i \in \mathcal{J}}$ for $k_i \leq r$, and therefore $J \cap \PP_r(X)$ is expressible as a finite union of closed subsets for every $r$, hence is closed. This implies that $J$ is a closed subset of $\PP_{\infty}(X)$. Now, it is  straightforward to show that every subset of $J$ is closed, and consequently $J$ is equipped with the discrete topology which, by the hypothesis of compacity, contradicts the assumption. \end{proof}
	\medskip
		Consider now each broken path space $\PP_k(X)$ as a $G^k$-space via the component-wise action. The space~$X \times X$ has a natural structure as a $(G\times G)$-space, but it can be seen also as a $G^k$-space via precomposition of the $(G\times G)$-action with the projection $G^k \to G\times G$ onto the first and last coordinates. In this manner $\pi_k \colon \PP_k(X) \to X \times X$ becomes a $G^k$-equivariant map, and one obtains the following commutative diagram. 
		$$ \begin{tikzcd}
			\PP_k(X) \arrow[rr, "\pi_k"] \arrow[d] & & X \times X \arrow[d] \\
			\PP_k(X)/G^k \arrow[rr] \arrow[rd] & & X/G \times X/G \\
			& P(X/G) \arrow[ru]
		\end{tikzcd}$$
	
	Here the vertical maps are orbit projections, the lower horizontal map is induced by $\pi_k$, the oblique map on the left is the concatenation of a sequence of~$k$ paths in $X/G$, and the oblique map on the right is the path space fibration for $X/G$. Composing the orbit projection for $\PP_k(X)$ and the concatenation of paths in $\PP_k(X)/G^k$, we get the obvious commutative diagram
	
	$$	\begin{tikzcd}
		\PP_k(X) \rar \arrow[d, swap, "\theta_k"] & X \times X \arrow[d, "\rho_{X \times X}"] \\
		P(X/G) \rar  & X/G \times X/G.
	\end{tikzcd} $$
	\begin{proposition}\label{EffOrbitSect}
		Let $X$ be a $G$ space and suppose that, for some $k > 0$, there exists a continuous map $$ \overline{s_k} \colon P(X/G) \rightarrow \PP_k(X)$$ such that $ \theta_k \circ \overline{s_k} = \id_{P(X/G)}$. Then $\tc^{G, k+2}(X) \leq \tc(X/G)$ and therefore $$\tc^{G, \infty}(X) \leq \tc(X/G).$$   
	\end{proposition}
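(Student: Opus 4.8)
\emph{Plan.} The idea is to transport a motion‑planning cover of the orbit space $X/G$ up to $X\times X$ along the orbit projection, and to convert each section $\sigma_i$ over the base into a $(G,k+2)$‑motion planner by running $\overline{s_k}$ and then padding the resulting $k$‑broken path with a constant path at each end; the two extra stages are exactly what is needed to reconcile the endpoints. Once the bound $\tc^{G,k+2}(X)\le \tc(X/G)$ is in place, the statement about $\tc^{G,\infty}(X)$ drops out of monotonicity.

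\emph{Step 1 (pull back a cover).} I would start by choosing, with $n=\tc(X/G)$, an open cover $U_0,\dots,U_n$ of $X/G\times X/G$ together with (strict) local sections $\sigma_i\colon U_i\to P(X/G)$ of the path fibration $\pi_{X/G}\colon P(X/G)\to X/G\times X/G$. Setting $V_i:=\rho_{X\times X}^{-1}(U_i)$ produces an open cover $V_0,\dots,V_n$ of $X\times X$ with the same number of members.

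\emph{Step 2 (build the motion planners).} Over each $V_i$ I would define $S_i\colon V_i\to\PP_{k+2}(X)$ as follows. For $(x,y)\in V_i$ put $\bar\gamma:=\sigma_i(\rho_X(x),\rho_X(y))\in P(X/G)$, so that $\bar\gamma(0)=\rho_X(x)$ and $\bar\gamma(1)=\rho_X(y)$, and write $(\gamma_1,\dots,\gamma_k):=\overline{s_k}(\bar\gamma)\in\PP_k(X)$. Since $\theta_k$ sends a $k$‑broken path to the concatenation of the paths $\rho_X\circ\gamma_1,\dots,\rho_X\circ\gamma_k$ in $X/G$, the hypothesis $\theta_k\circ\overline{s_k}=\id_{P(X/G)}$ forces $G\gamma_1(0)=\bar\gamma(0)=Gx$ and $G\gamma_k(1)=\bar\gamma(1)=Gy$. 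Consequently the tuple $(c_x,\gamma_1,\dots,\gamma_k,c_y)$ satisfies all the orbit‑matching conditions and lies in $\PP_{k+2}(X)$, so I may set $S_i(x,y):=(c_x,\gamma_1,\dots,\gamma_k,c_y)$. Then $\pi_{k+2}(S_i(x,y))=(c_x(0),c_y(1))=(x,y)$, i.e.\ $S_i$ is a strict section of $\pi_{k+2}$ over $V_i$ — in particular a $(G,k+2)$‑motion planner. Continuity of $S_i$ follows from continuity of $x\mapsto c_x$, of $\rho_X\times\rho_X$, of $\sigma_i$, of $\overline{s_k}$, and of the assembly map into $(PX)^{k+2}$, together with the fact that $\PP_{k+2}(X)$ carries the subspace topology.

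\emph{Step 3 (conclude).} The cover $\{V_i\}_{i=0}^n$ with the planners $S_i$ yields $\tc^{G,k+2}(X)=\secat(\pi_{k+2})\le n=\tc(X/G)$. Finally, by Lemma~\ref{LemmaMonotEffTC}(1)(b) the sequence $\{\tc^{G,m}(X)\}_m$ is non‑increasing and bounded below, hence its stable value $\tc^{G,\infty}(X)$ satisfies $\tc^{G,\infty}(X)\le\tc^{G,k+2}(X)\le\tc(X/G)$, as claimed. The only point requiring care is the endpoint bookkeeping in Step~2: the $k$‑broken path produced by $\overline{s_k}$ only connects some point of $Gx$ to some point of $Gy$, and one must check that inserting precisely the two constant paths $c_x$ and $c_y$ repairs this — everything else is routine.
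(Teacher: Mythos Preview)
Your proof is correct and follows essentially the same route as the paper's: pull back a motion-planning cover of $X/G\times X/G$ along $\rho_X\times\rho_X$, apply $\overline{s_k}\circ\sigma_i$ to obtain a $k$-broken path, and pad with $c_x$ and $c_y$ to get a $(G,k+2)$-motion planner. If anything, your write-up is more careful than the paper's in explicitly verifying the orbit-matching conditions at the junctions $c_x\to\gamma_1$ and $\gamma_k\to c_y$ via the hypothesis $\theta_k\circ\overline{s_k}=\id_{P(X/G)}$, and in invoking monotonicity for the final step.
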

	\begin{proof}
		Let $n := \tc(X/G)$. Consider $\{ V_i \}_{0 \leq i \leq n}$ with $V_i \subset X/G \times X/G$ and $s_i \colon V_i \rightarrow P(X/G)$ a local section for the path space fibration $\pi \colon P(X/G) \rightarrow X/G \times X/G$ for every $0 \leq i \leq n$.

		Define for each $0 \leq i \leq n$ the open set $U_i = (\rho_X \times \rho_X)^{-1}(V_i)$. The map $\overline{s_k}$ restricted to $U_i$ is of the form $$ \overline{s_k}(\overline{\gamma}) = (\gamma_1, \cdots, \gamma_k) $$ with the obvious condition $G \gamma_j(1) = G\gamma_{j+1}(0)$ for every $0 \leq j \leq k$. Now define, for each $U_i$, a map $$ \xi_i \colon U_i \rightarrow \PP_{k+2}(X) \qquad \xi_i(x,y) = (c_x, \overline{s}_k(\overline{\gamma}), c_y) $$ for $\overline{\gamma}(0) = [x]$ and $\overline{\gamma}(1) = [y]$. It is immediate from its definition that $\pi_k \circ \xi_i = \id_{U_i}$ and thus $\{ U_i\}_{0 \leq i \leq n}$ constitutes a categorical cover for $\tc^{G,k+2}(X)$. Consequently $$\tc^{G,k+2}(X) \leq n = \tc(X/G).$$ \end{proof}
	
	\begin{remark}
		It is important to note that the section $\overline{s_k}$ assumed before is not necessarily induced by a section $s \colon X/G \rightarrow X$ of the orbit map $\rho_X$. We will see that, if such a section $s$ exists, then Proposition \ref{EffOrbitSect} is just an immediate consequence of Theorem \ref{EfOrbSect}, that we will state and prove in a later section. 
		
	\end{remark}
	
	\section{\centering Effective LS-category} \label{Section3}
	
	In most cases topological complexity is a significantly difficult invariant to compute, one for which no general systematic way of calculation exists. One of the possible approaches to give estimates for $\tc$ relies in its well known bounds by Lusternik-Schnirelmann category which is, in most cases, an easier invariant to compute, and it is generally better understood than its counterpart.
	
	There is a natural version of LS-category in the equivariant setting, the so called Lusternik-Schnirelmann $G$-category, which is defined as follows. For a $G$-space $X$ we say that a $G$-invariant open subset $U \subseteq X$ is \emph{$G$-categorical} if the inclusion $U \hookrightarrow X$ is $G$-homotopic to a $G$-equivariant map with values in a single orbit. Then, the LS $G$-category of $X$, denoted by $\cat_G(X)$, is defined as the smallest integer $m \geq 0$ such that there exists an open cover of $X$ by $m+1$ $G$-categorical open subsets. Indeed, for both equivariant and invariant topological complexity, a lower bound in terms of LS $G$-category can be found, at least in cases where the set of fixed points is non-empty, see \cite[Proposition 5.7, Corollary 5.8]{GrantColman12} and \cite[Proposition 2.7]{BlKa1} respectively. However, as discussed by Z. B\l{}aszczyk and M. Kaluba in \cite[Section 7]{BlKa2}, such lower bound is not possible for effective topological complexity. Moreover, they noted that this impossibility do not stem from the particular definition of the invariant, but rather from the philosophy behind it, i.e. such bound would be impossible to accomplish for any other homotopy invariant $\mathcal{TC}$ with the property $\mathcal{TC}(X) \leq \tc^{G,\infty}(X)$. In fact, such an anomalous behaviour in the effective setting is hardly surprising. After all, unlike the cases of (strongly) equivariant and invariant $\tc$, the effective motion planners are not required to be equivariant. 
	
	Given the additional layer of difficulty that the effective topological complexity carries, it is only natural to ponder whether a category lower bound can be laid down in the effective setting. The unfeasibility of the LS $G$-category points out to the necessity of considering a new candidate, an analogue of usual LS-category for the effective setting. In this section we will fill such void, and we will develop a notion of effective Lusternik-Schnirelmann category, which we will show that behaves analogously in the effective setting as the classic LS-category does in the classic one.
	\medskip
	
Recall that, given a fibration $f: X \rightarrow Y$, property (1) in Theorem \ref{TheoremSecatProperties} gives the upper bound $\cat(Y) \geq \secat(f)$. Let $x_0 \in X$ such that $P_*X$ is the space of paths starting at $x_0$, and consider the inclusion $X \hookrightarrow X \times X$ by $x \mapsto (x_0,x)$. There is an obvious pullback diagram of the form $$\begin{tikzcd}
	P_*X \arrow[r] \arrow[d, swap, "\ev_1"] & PX \arrow [d, "\pi_1"] \\
	X \arrow[r, hook] & X \times X
\end{tikzcd}$$ and taking into account that $\cat(X) = \secat(\ev_1)$, we have the classic chain of inequalities relating category and $\tc$ 
\begin{equation} \label{estandarcatTc}
	\cat(X) \leq \tc(X) \leq \cat(X \times X) \leq 2 \cat(X).
\end{equation} 

We can further generalize the previous pullback diagram considering an analogous pullback diagram associated, for each $k > 0$, to the $k$-effective fibration $\pi_k$ :

$$\begin{tikzcd}
	P^k_*(X) \arrow[r] \arrow[d, swap, "q_k"] & \PP_k(X) \arrow [d, "\pi_k"] \\
	X \arrow[r, hook] & X \times X.
\end{tikzcd}$$
In this way we obtain a fibration $q_k : P^k_*(X) \rightarrow  X$ as a pullback of $\pi_k$ by the inclusion of $ X \hookrightarrow X \times X$. Those are, precisely, the fibrations that encode in the effective setting the relationship analogous to the one the usual LS-category had with the standard $\tc$. Thus, the definition comes naturally. 
\begin{definition}
	For $k \geq 1$ integer we define the $k$-effective Lusternik-Schnirelmann category of $X$ as $\cat^{G,k}(X) = \secat(q_k)$. The \textit{effective LS-category} of $X$, thus, is defined as $$ \cat^{G, \infty}(X) = \text{min} \{ \cat^{G, k}(X) \mid k \geq 1 \}. $$
\end{definition}

%	\texttt{Antes de compararlo con TC, igual se podrían añadir propiedades de la cat{G,k} como por ejemplo el equivalente a Lemma 1.3. Ejemplos como la Prop 1.5 también deberían ser calculables para la lscat efectiva. Podría además demostrarse que la LS-cat efectiva es un invariante minimal al estilo LS-cat clásica, pero creo que eso implicaría caracterizaciones a lo Whitehead, mientras que esta definición 2.1 aquí es a la Ganea}

It is straightforward from the definition that $q_1 = \ev_1$, so $\cat^{G,1}(X) = \cat(X)$. Indeed, the classic chain of inequalities relating LS-category and topological complexity \ref{estandarcatTc} can be generalised to the effective setting in a natural way:
	
	\begin{theorem}\label{CatTCEffIneq}
		For $X$ a $G$-space, the following chain of inequalities holds: 
		\begin{equation} \label{effectivecatTc}
			\cat^{G, \infty}(X) \leq \tc^{G, \infty}(X) \leq \cat^{G \times G, \infty}(X \times X) \leq 2 \cat^{G, \infty}(X).
		\end{equation} 
	\end{theorem}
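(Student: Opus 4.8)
The plan is to establish the chain of inequalities \eqref{effectivecatTc} term by term, mimicking the classical argument \eqref{estandarcatTc} but keeping careful track of the stage indices $k$, since the effective invariants are defined as minima over $k$ (or as the stabilized value), and stages do not necessarily match up across the three inequalities.

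\textbf{First inequality: $\cat^{G,\infty}(X) \leq \tc^{G,\infty}(X)$.} I would work at a fixed stage $k$. By construction $q_k \colon P^k_*(X) \to X$ is the pullback of $\pi_k \colon \PP_k(X) \to X\times X$ along the inclusion $j \colon X \hookrightarrow X\times X$, $x \mapsto (x_0,x)$. Since $\pi_k$ is a fibration (as noted in the excerpt), so is $q_k$, and by the pullback property of sectional category (Theorem \ref{TheoremSecatProperties}(3)) we get $\secat(q_k) \leq \secat(\pi_k)$, i.e. $\cat^{G,k}(X) \leq \tc^{G,k}(X)$ for every $k$. Taking minima over $k$ on both sides — using that $\tc^{G,\infty}(X) = \tc^{G,k_0}(X)$ for the stabilizing $k_0$, and that $\cat^{G,\infty}(X) \leq \cat^{G,k_0}(X)$ by definition of the minimum — yields $\cat^{G,\infty}(X) \leq \tc^{G,\infty}(X)$.

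\textbf{Third inequality: $\cat^{G\times G,\infty}(X\times X) \leq 2\cat^{G,\infty}(X)$.} This is the effective analogue of $\cat(X\times X) \leq 2\cat(X)$. The plan is to show that the fibration $q^{X\times X}_k$ computing $\cat^{G\times G,k}(X\times X)$ is, up to the relevant identifications, the product of two copies of $q^X_k$, or is dominated by such a product. Concretely, a $G^k$-broken path in $X$ together with a second one assembles into a $(G\times G)^k$-broken path in $X\times X$; using the subadditivity of sectional category under products (Theorem \ref{TheoremSecatProperties}(4)) applied to $q^X_k \times q^X_k$, one obtains $\cat^{G\times G,k}(X\times X) \leq 2\cat^{G,k}(X)$, and then pass to minima over $k$ as above. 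The point requiring care is that $\PP_k(X\times X)$ with the diagonal-type $(G\times G)$-action is $(G\times G)$-homeomorphic to $\PP_k(X)\times\PP_k(X)$ compatibly with the evaluation fibrations; this is essentially formal but should be spelled out.

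\textbf{Middle inequality: $\tc^{G,\infty}(X) \leq \cat^{G\times G,\infty}(X\times X)$.} This is the subtlest step and I expect it to be the main obstacle, because in the classical case one uses $\tc(X) = \secat(\pi_1^X) \leq \cat(X\times X) = \secat(\ev_1^{X\times X})$ via the identification of the path fibration of $X$ with the based-path fibration of $X\times X$ pulled back along the diagonal $\Delta\colon X\times X \to (X\times X)\times(X\times X)$ — but in the effective setting the broken-path space $\PP_k(X\times X)$ for the $(G\times G)$-action is not literally $\PP_k(X)$, and the orbit conditions $G\gamma_i(1) = G\gamma_{i+1}(0)$ behave differently under the diagonal. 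The strategy I would pursue: given a $(G\times G, k)$-motion planner on an open set $U$ of $(X\times X)\times(X\times X)$, restrict along the diagonal embedding $X\times X \hookrightarrow (X\times X)^2$ to produce, from a broken path in $X\times X$ joining $(x,x)$ to $(y_1,y_2)$, a broken path in $X$ joining $x$ to... — here one must reconcile the two $G$-coordinates. The cleanest route is probably to observe that $P^{G\times G,k}_*(X\times X)$, the pullback defining $\cat^{G\times G,k}(X\times X)$, receives a map from (or maps onto, in the domination sense of Lemma \ref{LemmaMonotEffTC}(2)) $\PP_k(X)$ compatibly with the two fibrations to $X\times X$, so that Theorem \ref{TheoremSecatProperties}(3) or a direct section-pullback argument gives $\tc^{G,k'}(X) \leq \cat^{G\times G,k}(X\times X)$ for an appropriate relation between $k$ and $k'$ (likely $k' = k$ after checking the orbit conditions collapse correctly along the diagonal). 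Once the stage bookkeeping is handled, taking minima/stable values closes the chain. If the naive stagewise comparison fails to give exactly matching indices, the fallback is to use monotonicity in $k$ from Lemma \ref{LemmaMonotEffTC}(1)(b) to absorb the discrepancy before passing to the $\infty$-level invariants.
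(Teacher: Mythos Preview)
Your handling of the first and third inequalities is correct and matches the paper's proof exactly: pullback of $\pi_k$ along $x\mapsto(x_0,x)$ for the first, and the product inequality for $\secat$ applied to $q_k^X\times q_k^X$ (using $P^k_*(X\times X)\cong P^k_*(X)\times P^k_*(X)$) for the third.

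The middle inequality, however, has a genuine gap. You are looking for an abstract domination or pullback comparison between $\PP_k(X)$ and $P^{G\times G,k}_*(X\times X)$ via a diagonal embedding $X\times X\hookrightarrow (X\times X)^2$; this is not how the argument goes, and the ``orbit conditions collapse correctly along the diagonal'' step you flag as needing checking does not in fact yield a clean map of fibrations. The paper's argument is a direct, hands-on construction of sections rather than a formal $\secat$ comparison. The key idea you are missing is this: a local section of $q_k^{X\times X}$ over $U\subset X\times X$ assigns to $(x,y)$ an element of $P^k_*(X\times X)=P^k_*(X)\times P^k_*(X)$, i.e.\ a pair of $k$-broken paths in $X$, one from $x_0$ to $x$ and one from $x_0$ to $y$. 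Reversing the first tuple componentwise and splicing it onto the second (concatenating the two middle pieces at $x_0$) produces a $(2k-1)$-broken path in $X$ from $x$ to $y$, hence a local section of $\pi_{2k-1}$ over $U$. This gives $\tc^{G,2k-1}(X)\le\cat^{G\times G,k}(X\times X)$, and the stage discrepancy is absorbed upon passing to the $\infty$-invariants. This is precisely the effective analogue of the classical ``go back to the basepoint, then out to the target'' trick; once you see it, the proof is short.
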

	\begin{proof}
		For the first inequality, observe that $q_k$ is defined as a pullback fibration of the $k$-effective fibration $\pi_k$, so the inequality holds by $(3)$ in Theorem \ref{TheoremSecatProperties}. 
		
		To show the second inequality, first notice that we can immediately identify $P^k_*(X \times X) = P^k_*(X) \times P^k_*(X)$. Consider a categorical cover $\{ U_j\}_{0 \leq j \leq n}$ of $X \times X$ for $q_k$, take any of its open subsets $U_i \subset X \times X$ and a local section $s_{U_j}$ of $q_k$ over $U$, defined as $$ s_{U_j} := ((s_1, \cdots, s_k),(s'_1, \cdots s'_k)) $$ where, for each $1 \leq i \leq k$, the entries $s_i$ and $s'_i$ correspond with components of the local section to the $i$-th coordinate of $P_*^k(X)$ for each of the two copies of $X$ in the cartesian product. Define now a map from $U_i$ to the $(2k-1)$-broken path space $$ \sigma_{U_j} : U_j \rightarrow \PP_{2k-1}(X) \qquad \sigma_{U_j}(x,y) = (s_k(x,y)^{-1}, \cdots, s_1(x,y)^{-1} \ast s'_1(x,y), \cdots, s'_k(x,y)) $$ where, for each index $1 \leq i \leq k$, we denote by $s_i(x,y)^{-1}$ the path walked in reverse orientation, and $s_1(x,y) \ast s'_1(x,y)$ is just the corresponding concatenation of paths. One checks that this map determines a local section for the fibration $\pi_{2k-1}$ over $U_j$ for each of the possible choices of $U_j$ in the categorical cover, hence $$\tc^{G,\infty}(X) \leq \cat^{G, \infty}(X \times X).$$
		
		Finally, the last inequality is just a consequence of property $(4)$ in Theorem \ref{TheoremSecatProperties}.
	\end{proof} 
	
	From the definition and Theorem \ref{TheoremSecatProperties} it is obvious that, in analogy with the effective topological case \begin{equation} \label{effcatleqcat}
		\cat^{G, \infty}(X) \leq \cat^{G,k}(X) \leq \cat(X).
	\end{equation} As such, combining \ref{effcatleqcat}, Proposition \ref{effectivecatTc}, Corollary \ref{catTC0} and Theorem \ref{EffSpheres}, we immediately derive the effective LS category of $\ZZ_p$-spheres.
	
	\begin{corollary}
		For any prime $p$, suppose $\ZZ_p$ acting on $S^n$. Then $$ \cat^{\ZZ_p, \infty}(S^n) = 1. $$
	\end{corollary}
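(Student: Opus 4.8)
The plan is to pin down $\cat^{\ZZ_p, \infty}(S^n)$ by two soft inequalities, with no new geometric construction required. First I would obtain the upper bound for free: inequality \ref{effcatleqcat} gives $\cat^{G,\infty}(X) \le \cat(X)$ for every $G$-space $X$, and since $S^n$ is not contractible but is covered by two contractible open sets (the complements of two antipodal points, or its two singletons when $n=0$), we have $\cat(S^n) = 1$ in the reduced convention. Hence $\cat^{\ZZ_p,\infty}(S^n) \le 1$ for any $\ZZ_p$-action on $S^n$.

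For the reverse inequality I would establish non-vanishing. If $\cat^{\ZZ_p,\infty}(S^n)$ were $0$, then the last link of the chain in Theorem \ref{CatTCEffIneq} (equivalently, Corollary \ref{catTC0}) would force $\tc^{\ZZ_p,\infty}(S^n) \le 2\,\cat^{\ZZ_p,\infty}(S^n) = 0$. But Theorem \ref{EffSpheres} computes $\tc^{\ZZ_p,\infty}(S^n)$ explicitly: for $p$ odd it equals $\tc(S^n) \in \{1,2\}$, and for $p = 2$ every tabulated value is at least $1$ with one exception. So for the actions under consideration $\tc^{\ZZ_p,\infty}(S^n) \ge 1$, contradicting $\tc^{\ZZ_p,\infty}(S^n) = 0$; therefore $\cat^{\ZZ_p,\infty}(S^n) \ge 1$. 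Combining the two bounds yields $\cat^{\ZZ_p,\infty}(S^n) = 1$.

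The hard part is really no part at all — everything is a formal consequence of results already in hand — but the one point that demands attention is the exceptional row of Theorem \ref{EffSpheres}: the linear, orientation-reversing involution of $S^n$ with $(n-1)$-dimensional fixed-point set, for which $\tc^{\ZZ_2,\infty}(S^n) = 0$ and hence, by Corollary \ref{catTC0}, $\cat^{\ZZ_2,\infty}(S^n) = 0$. Thus the statement is to be understood for all $\ZZ_p$-actions on $S^n$ other than this reflection (equivalently, for all actions with $\tc^{\ZZ_p,\infty}(S^n) \ne 0$), and the only verification the argument genuinely requires is confirming that this is the sole entry of Theorem \ref{EffSpheres} producing the value $0$.
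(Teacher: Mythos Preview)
Your argument is exactly the one the paper has in mind: the upper bound comes from \eqref{effcatleqcat} together with $\cat(S^n)=1$, and the lower bound from Corollary~\ref{catTC0} combined with the values in Theorem~\ref{EffSpheres}. There is nothing to add on the method.

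Your closing remark is not a quibble but a genuine observation: the corollary as stated does not hold for the linear orientation-reversing $\ZZ_2$-action on $S^n$ with $(n-1)$-dimensional fixed-point set, since Theorem~\ref{EffSpheres} gives $\tc^{\ZZ_2,\infty}(S^n)=0$ there and Corollary~\ref{catTC0} then forces $\cat^{\ZZ_2,\infty}(S^n)=0$. The paper's own ingredients yield this exception, so the statement should be read as holding for all $\ZZ_p$-actions on $S^n$ other than that reflection (equivalently, for all actions appearing in Theorem~\ref{EffSpheres} with nonzero effective $\tc$). You are right to flag it.
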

	
	Recall that by the LS-category of a map $f: X \rightarrow Y$ we understand the minimal number of open sets in a covering of $X$ such that $f$ is nullhomotopic over each one of them. It is not surprising that the category of the orbit map of $X$ with respects to the $G$-action turns out to be a lower bound for the effective LS category of $X$:
	
	\begin{proposition}\label{Catmap}
		Let $X$ be a $G$-space, and $\rho_{X} \colon X \rightarrow X/G$ the orbit map with respect to the action of $G$. Then $\cat(\rho_X) \leq \cat^{G, \infty}(X).$
	\end{proposition}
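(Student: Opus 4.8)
The plan is to establish the slightly stronger statement that $\cat(\rho_X) \le \cat^{G,k}(X)$ for \emph{every} $k \ge 1$; since $\cat^{G,\infty}(X) = \min_{k \ge 1}\cat^{G,k}(X)$, this suffices. So fix $k$ and put $n := \cat^{G,k}(X) = \secat(q_k)$. Because $q_k \colon P^k_*(X) \to X$ is a fibration, I would first use its homotopy lifting property to rectify the homotopy sections witnessing $\secat(q_k) = n$ into \emph{strict} ones: an open cover $U_0,\dots,U_n$ of $X$ together with maps $\sigma_i \colon U_i \to P^k_*(X)$ satisfying $q_k\circ\sigma_i = \iota_{U_i}$, the inclusion. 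Writing $\sigma_i(x) = (\gamma_1^x,\dots,\gamma_k^x)$, the definition of $P^k_*(X)$ records precisely the data needed below: $\gamma_1^x(0) = x_0$ for all $x$, the gluing conditions $G\gamma_j^x(1) = G\gamma_{j+1}^x(0)$ for $1 \le j < k$, and $\gamma_k^x(1) = x$.

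The crux is the observation that applying $\rho_X$ to a $k$-broken path heals all of its discontinuities: the condition $G\gamma_j^x(1) = G\gamma_{j+1}^x(0)$ says exactly that $\rho_X(\gamma_j^x(1)) = \rho_X(\gamma_{j+1}^x(0))$, so the paths $\rho_X\circ\gamma_1^x,\dots,\rho_X\circ\gamma_k^x$ in $X/G$ have matching consecutive endpoints and concatenate into a genuine path. This is exactly the content of the map $\theta_k \colon \PP_k(X) \to P(X/G)$ from the commutative diagram of Section~\ref{Section2}, so I would set $\overline{\sigma_i} := \theta_k\circ\sigma_i \colon U_i \to P(X/G)$, a continuous map whose endpoints, by the bookkeeping above, are $\overline{\sigma_i}(x)(0) = \rho_X(\gamma_1^x(0)) = \rho_X(x_0)$, constant in $x$, and $\overline{\sigma_i}(x)(1) = \rho_X(\gamma_k^x(1)) = \rho_X(x)$. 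Passing to the adjoint, which is legitimate since $I$ is locally compact Hausdorff, produces a continuous homotopy $H_i \colon U_i\times I \to X/G$, $H_i(x,t) = \overline{\sigma_i}(x)(t)$, from the constant map at $\rho_X(x_0)$ to $\rho_X|_{U_i}$. Hence $\rho_X$ is null-homotopic over each of the $n+1$ sets covering $X$, so $\cat(\rho_X) \le n$. Since $k$ was arbitrary, $\cat(\rho_X) \le \min_{k \ge 1}\cat^{G,k}(X) = \cat^{G,\infty}(X)$.

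I do not anticipate any serious obstacle: the whole argument rests on the elementary remark that the broken-path gluing condition is, after applying the orbit projection, simply continuity. The two points demanding a little care are the endpoint bookkeeping --- this is why one must work with the based space $P^k_*(X)$ and keep track of $\gamma_1^x(0) = x_0$ and $\gamma_k^x(1) = x$ --- and the rectification of homotopy sections to strict sections. The latter can in any case be avoided altogether by concatenating the homotopy $H_i$ with $\rho_X$ applied to a homotopy realising $q_k\circ\sigma_i \simeq \iota_{U_i}$, which costs nothing beyond one reparametrisation.
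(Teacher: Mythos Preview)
Your proof is correct and follows essentially the same route as the paper: both take a cover by local sections of $q_k$, push the broken paths down to $X/G$ via $\rho_X$ and concatenate (your $\theta_k\circ\sigma_i$ is exactly the paper's $\lambda\circ s_i$), then read off the resulting path as a null-homotopy of $\rho_X|_{U_i}$. Your treatment is in fact slightly more careful about the rectification of homotopy sections to strict ones and about the adjunction producing $H_i$, but the argument is the same.
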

	\begin{proof}
		Define a map $\lambda \colon P^k_*(X) \rightarrow P_*(X/G)$ by composing each path component in $P_*^k(X)$ with the orbit map $\rho_X \colon X \rightarrow X/G$, and then concatenating the resulting paths in $X/G$ in the order prescribed by their appearance in the $k$-tuple. Explicitly put $$\begin{tikzcd}[row sep=0pt,column sep=1pc]
			\lambda \colon P_*^k(X) \arrow{r} & P_*(X/G) \\
			{\hphantom{(\gamma_0) \colon{}}} (\alpha_1, \cdots, \alpha_k) \arrow[mapsto]{r} &  (\rho_X \circ \alpha_1) \ast \cdots \ast (\rho_X \circ \alpha_k).
		\end{tikzcd} $$ The map $\lambda$ thus defined fits inside a commutative diagram of the form $$ \begin{tikzcd}
			P_*^k(X) \arrow[r, "\lambda"] \arrow[d, swap, "q_k"] &  P_*(X/G) \arrow[d, "q_1"] \\
			X \arrow[r, swap, "\rho_X"]  & X/G. 
		\end{tikzcd} $$ Consider now $\{ U_i \}_{0 \leq i \leq n}$ an effective categorical open cover of $X$ for $\cat^{G, \infty}(X)$, and take for each index $0 \leq i \leq n$ a local section $s_i: U_i \rightarrow P^k_{\ast}(X) $ of $q_k$. Define for each $0 \leq i \leq n$ a map $$ H_i : U_i \times I \rightarrow X/G \qquad  H(x,0) = (\lambda \circ s_i)(x)(0)  \qquad H(x,1) = (\lambda \circ s_i)(x)(1) = \rho_{| U_i}(x). $$ Notice that $H_i$ defines a a homotopy between $\rho_{X_{| U_i}}$, the restriction of the orbit map on $U_i$, and a constant map, making $\rho_{X_{| U_i}}$ nullhomotopic and therefore showing that  $\{U_i\}_{0 \leq i \leq n}$ is a categorical cover for $\cat(\rho_X)$, so it follows $$ \cat(\rho_X) \leq \cat^{G,\infty}(X). $$ \end{proof}
	
	Let us discuss an example on how to make use of the notion of effective LS-category to bound effective topological complexity from below. 
	
	\begin{example}
		Consider $\CC P^n \times \CC P^n$ with $\ZZ_2$ acting on the product by switch of coordinates. It is clear that the action is not free, but it can be turned into a free action in a standard way by applying the Borel construction. That way, by considering the orbit projection with respect to this induced free action we end up with a $2$-fold covering projection map $$ \rho \colon \mathbb{C}P^n \times \mathbb{C}P^n \times S^{\infty} \rightarrow (\mathbb{C}P^n \times \mathbb{C}P^n \times S^{\infty})/\ZZ_2. $$  The category of a map is bounded below by the nilpotency of its image in cohomology, so we have $\cat(\rho) \geq \text{nil}(\im \rho^*)$. Recall that the real cohomology ring structure of $\mathbb{C}P^n$ corresponds with $$ H^*(\mathbb{C}P^n; \mathbb{R}) = \mathbb{R}[\alpha]/(\alpha^{n+1}) \qquad |\alpha| = 2.$$ If we denote by $x$ and $y$ the generators of the second cohomology group of $\mathbb{C}P^n \times \mathbb{C}P^n$ corresponding to the factors of the product then by \cite[Proposition 3G.1]{Hatcher} we have that $x + y \in \text{Im}\rho^*$. Given that $$ (x+y)^{2n} = \binom{2n}{n} x^n y^n \neq 0 $$ we obtain that $\text{nil}(\text{Im}\rho^*) \geq 2n$, and so it follows that $$ \tc^{\ZZ_2, \infty}(\mathbb{C}P^n\times \mathbb{C}P^n) \geq \cat^{\ZZ_2, \infty}(\mathbb{C}P^n\times \mathbb{C}P^n) \geq \cat(\rho) \geq 2n. $$

	\end{example}

	\section{\centering The problem of $\tc^{G,\infty}(X) = 0$} \label{Section4}

	It is well known since the inception of the whole theory, \cite[Theorem 1]{Farber03} that the only  spaces with topological complexity equal to zero are those which are contractible. Perhaps, it is not so surprising that, given the additional layer of complexity that is involved in the definition of the effective variant, such a basic case it is still unknown. In this section we will briefly discuss the situation, and also present counter-examples to certain proposed characterizations of $\tc^{G,\infty}(X) = 0$.  
	
	It is immediate to check from the definition that, by design, if $X$ is a contractible or $G$-contractible space, then $\tc^{G,\infty}(X)=0$. The converse, however, is not true, and an easy counterexample can be constructed by considering the computation of the $\ZZ_p$-spheres of Theorem \ref{EffSpheres}: 
	
	\begin{example}
		Consider the unit $n$-sphere $S^n$, for $n \geq 1$, equipped with a $\ZZ_2$ action by involution, which interchanges the two hemispheres and leaves the equator fixed. By Theorem \ref{EffSpheres} we have that $\tc^{\ZZ_2, \infty}(S^n) = 0$.
	\end{example} 
	
	Despite the failure of this reciprocity, the condition that the effective topological complexity of a $G$-space is zero imposes a strong condition over orbit map with respect to the action, as the following proposition shows.
	
	\begin{proposition} \label{OrbitNullHom}
		Let $X$ be a $G$-space such that $\tc^{G,\infty}(X)=0$. Then the orbit projection map $\rho_X \colon X \to X/G$ is nullhomotopic.
	\end{proposition}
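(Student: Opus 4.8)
The plan is to compare the $k$-effective fibration $\pi_k$ with the path-space fibration of the orbit space via the map $\theta_k$ constructed in Section~\ref{Section2}, and then to extract a nullhomotopy of $\rho_X$ by restricting a global effective motion planner to $\{x_0\}\times X$.

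First I would unravel the hypothesis. By Lemma~\ref{LemmaMonotEffTC}(1)(b) the sequence $\big(\tc^{G,k}(X)\big)_{k\ge 1}$ is non-increasing, so $\tc^{G,\infty}(X)=0$ means that $\tc^{G,k}(X)=\secat(\pi_k)=0$ for some (indeed, all sufficiently large) $k$. Equivalently, there is a global map $s\colon X\times X\to\PP_k(X)$ with $\pi_k\circ s\simeq\id_{X\times X}$. Recall the commutative square from Section~\ref{Section2},
$$\begin{tikzcd}
\PP_k(X) \arrow[r, "\pi_k"] \arrow[d, swap, "\theta_k"] & X\times X \arrow[d, "\rho_{X\times X}"]\\
P(X/G) \arrow[r, "\pi"] & X/G\times X/G,
\end{tikzcd}$$
where $\pi(\gamma)=(\gamma(0),\gamma(1))$, so that $\pi\circ\theta_k=\rho_{X\times X}\circ\pi_k$.

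Next I would restrict along the inclusion $j\colon X\hookrightarrow X\times X$, $j(x)=(x_0,x)$, and set $\beta:=\theta_k\circ s\circ j\colon X\to P(X/G)$. Using the square together with $\pi_k\circ s\circ j\simeq j$ gives
$$(\ev_0\circ\beta,\ \ev_1\circ\beta)=\pi\circ\beta=\rho_{X\times X}\circ\pi_k\circ s\circ j\simeq\rho_{X\times X}\circ j=(c_{\rho_X(x_0)},\ \rho_X).$$
Projecting this homotopy of maps $X\to X/G\times X/G$ onto the two factors yields $\ev_0\circ\beta\simeq c_{\rho_X(x_0)}$ (a constant map) and $\ev_1\circ\beta\simeq\rho_X$. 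On the other hand, for any map into a path space the two endpoint evaluations are homotopic through the tautological homotopy $(x,t)\mapsto\beta(x)(t)$ (continuous because the ambient spaces are compactly generated), so $\ev_0\circ\beta\simeq\ev_1\circ\beta$. Concatenating the three homotopies gives $\rho_X\simeq\ev_1\circ\beta\simeq\ev_0\circ\beta\simeq c_{\rho_X(x_0)}$, i.e. $\rho_X$ is nullhomotopic.

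There is no serious obstacle here; the only point requiring care is that a global effective motion planner is merely a homotopy-section of $\pi_k$, so one must transport the homotopies through the diagram rather than rely on strict equalities. I would also note the shortcut available from the previous section: by Theorem~\ref{CatTCEffIneq}, $\tc^{G,\infty}(X)=0$ forces $\cat^{G,\infty}(X)=0$, whence $\cat(\rho_X)\le\cat^{G,\infty}(X)=0$ by Proposition~\ref{Catmap}, so $\rho_X$ is nullhomotopic. The direct argument above has the advantage of exhibiting the nullhomotopy explicitly.
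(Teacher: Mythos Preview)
Your proof is correct and follows essentially the same route as the paper: both compose a global section $s$ with $\theta_k$, restrict to $\{x_0\}\times X$, and read off the nullhomotopy of $\rho_X$ from the resulting map into $P(X/G)$ via the tautological homotopy $(x,t)\mapsto\beta(x)(t)$. Your write-up is slightly more careful in tracking that $s$ is only a homotopy-section (the paper tacitly uses a strict section, which is harmless since $\pi_k$ is a fibration), and your observed shortcut via $\cat(\rho_X)\le\cat^{G,\infty}(X)\le\tc^{G,\infty}(X)$ is a valid alternative that the paper does not spell out here.
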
 
	\begin{proof}
		Assume that $\tc^{G, \infty}(X) = 0$. Then there is an integer $k \geq 0$ such that there exists a global section of the $k$-effective fibration $\pi_k$, i.e. a  map $$ s \colon X \times X \rightarrow \PP_k(X) \qquad  \pi_k \circ s \simeq \id_{X \times X}.$$ Defining the map $$\zeta_k \colon X \times X \rightarrow P(X/G), \qquad \zeta_k :=\theta_k \circ s,$$ we obtain the following commutative diagram $$ \begin{tikzcd}
			& P(X/G) \arrow[d]  \\
			X \times X \arrow[r, swap, "\rho_X \times \rho_X"] \arrow[ru, "\zeta_k"]  & X/G \times X/G.
		\end{tikzcd} $$ Therefore, for a choice of a distinguished point $x_0 \in X$, we can define a map $$\begin{tikzcd}[row sep=0pt,column sep=1pc]
			H \colon X \times I \arrow{r} & X/G \\
			{\hphantom{(\gamma_0) \colon{}}} (x,t) \arrow[mapsto]{r} &  \zeta_k(x_0, x)(t)
		\end{tikzcd} $$ which, evaluated at $t = 0$ and $t = 1$, gives the following values
		
		$$ H(x,0) = \zeta_k(x_0,x)(0) = \rho_X(x_0), \qquad H(x,1) = \zeta_k(x_0, x)(1) = \rho_X(x). $$ Hence, $H$ defines a homotopy between the orbit map $\rho_X$ and the constant path $c_{\rho_X(x_0)}$, and as a result $\rho_X$ is seen to be nullhomotopic. \end{proof}	 
	
	Unfortunately, the converse of the previous implication, again, does not hold in general. This time the counterexample is a little bit more elaborated, though: 
	
	\begin{example}
	Consider the six-dimensional sphere, $S^6$, with $\ZZ_2$ acting on it via the antipodal action. Now, take the orbit projection map $\rho \colon S^6 \rightarrow \mathbb{R}P^6$. The eighth suspension of the orbit projection $$ \Sigma^8 \colon \Sigma^8 S^6 \to \Sigma^8 \mathbb{R}P^6$$ coming from the antipodal action on $S^6$ can be seen to be nullhomotopic (by the work of E. Rees in his PhD thesis, see \cite[Corollary 2]{Rees69}). However, $\Sigma^8 S^6$, equipped with the corresponding involution has a $7$-dimensional fixed point set. By Theorem \ref{EffSpheres}, we have that $\tc^{\ZZ_2, \infty}(\Sigma^8 S^6) = 1$.
	\end{example}
	
	In the previous section, we made use of our definition of effective LS-category to generalize the classic bound of topological complexity in terms of Lusternik-Schnirelmann category, see Theorem \ref{CatTCEffIneq}. It is important to notice that one of the immediate consequences of such upper and lower bound indicates an alternative approach to the problem of determining the kind of $G$-spaces with effective topological complexity equal to zero.
	
		\begin{corollary} \label{catTC0}
		If $X$ is a $G$-space, $\cat^{G,\infty}(X) = 0$ if and only if $\tc^{G, \infty}(X) = 0.$
	\end{corollary}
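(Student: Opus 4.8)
The plan is to deduce this directly from the sandwich inequality in Theorem~\ref{CatTCEffIneq}. Recall that we have established
\begin{equation*}
	\cat^{G,\infty}(X) \leq \tc^{G,\infty}(X) \leq \cat^{G\times G,\infty}(X\times X) \leq 2\cat^{G,\infty}(X).
\end{equation*}
The forward implication is immediate: if $\cat^{G,\infty}(X)=0$, then the outer terms of the inequality vanish, so $\tc^{G,\infty}(X)$ is squeezed to $0$.

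For the converse, suppose $\tc^{G,\infty}(X)=0$. The leftmost inequality only tells us $\cat^{G,\infty}(X)\leq \tc^{G,\infty}(X)=0$, which already forces $\cat^{G,\infty}(X)=0$ since the invariant is a nonnegative integer. So in fact both directions follow at once from the single inequality $\cat^{G,\infty}(X)\leq\tc^{G,\infty}(X)$ together with $\tc^{G,\infty}(X)\leq 2\cat^{G,\infty}(X)$, both of which are contained in Theorem~\ref{CatTCEffIneq}.

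Concretely, I would write: By Theorem~\ref{CatTCEffIneq} we have $\cat^{G,\infty}(X)\leq\tc^{G,\infty}(X)$ and $\tc^{G,\infty}(X)\leq 2\cat^{G,\infty}(X)$. If $\cat^{G,\infty}(X)=0$ then $\tc^{G,\infty}(X)\leq 0$, and since $\tc^{G,\infty}(X)\geq 0$ always, equality $\tc^{G,\infty}(X)=0$ holds. Conversely, if $\tc^{G,\infty}(X)=0$ then $\cat^{G,\infty}(X)\leq 0$, and again nonnegativity yields $\cat^{G,\infty}(X)=0$.

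There is essentially no obstacle here: the statement is a formal corollary of the previously proven chain of inequalities, and the only thing being used beyond Theorem~\ref{CatTCEffIneq} is that sectional category (hence both effective invariants) takes values in $\mathbb{Z}_{\geq 0}$. The proof is a one-liner in each direction.
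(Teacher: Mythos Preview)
Your proof is correct and is exactly the argument the paper intends: the corollary is stated without proof precisely because it follows immediately from the sandwich inequalities of Theorem~\ref{CatTCEffIneq} together with nonnegativity of the invariants.
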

	
	\section{\centering Effective topological complexity and the orbit projection} \label{Section5}
	
	It is only natural to ponder about the relationship between the effective topological complexity of a $G$-space and distinguished properties of the orbit projection map associated to the $G$-action. In this section, we will investigate the influence of two of such properties. First, we analyze the scenario where the orbit projection map is endowed with a strict section. After that, we consider the instance where the orbit map is a fibration. In both cases, plenty of examples of computations and bounds are given.
	
	\subsection{Orbit map has a strict section.}
	
	In the circumstance that the orbit projection by the group action is equipped with a strict section $s \colon X/G \rightarrow X$, the effective framework gets significantly simplified. By using this section, one can lift all paths in $X/G$ to paths in the base space $X$, and the effective LS category and topological complexity coincide with the corresponding non-effective ones of the orbit space.
	
	\begin{theorem}\label{EfOrbSect}
		Let $X$ be a $G$-space. If the orbit map $\rho_X : X \rightarrow X/G$ has a strict section $s : X/G \rightarrow X$, the following holds:
		
		\begin{enumerate}[(1)]
			\item $\cat^{G, \infty}(X) = \cat(X/G)$.
			\item $\tc^{G, \infty}(X) = \tc(X/G)$.
		\end{enumerate}  
	\end{theorem}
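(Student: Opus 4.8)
The plan is to pin both effective invariants to their non-effective counterparts on $X/G$ by producing matching upper and lower bounds, using the strict section $s$ in the two opposite directions. For the upper bounds the mechanism is path lifting: if $\bar\gamma$ is a path in $X/G$ from $[x]$ to $[y]$, then $s\circ\bar\gamma$ is an honest (unbroken) path in $X$ from $s([x])$ to $s([y])$, and although $s([x])\neq x$ in general, one always has $Gs([x])=Gx$. So, starting from a categorical open cover $\{V_0,\dots,V_n\}$ of $X/G$ (with $n=\cat(X/G)$) together with nullhomotopies $H_i$ of $\mathrm{incl}_{V_i}$ to the constant map at the basepoint $[x_0]=\rho_X(x_0)$, I would set $U_i:=\rho_X^{-1}(V_i)$ and, for $x\in U_i$, define the $3$-broken based path $\bigl(c_{x_0},\, s\circ\bar\alpha_x,\, c_x\bigr)$, where $\bar\alpha_x\in P(X/G)$ is the lift of the reversed contraction path running from $[x_0]$ to $[x]$. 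The orbit-matching conditions hold because $Gs([x_0])=Gx_0$ and $Gs([x])=Gx$, so this is a continuous local section of $q_3$ over $U_i$, giving $\cat^{G,\infty}(X)\le\cat^{G,3}(X)\le\cat(X/G)$. The inequality $\tc^{G,\infty}(X)\le\tc(X/G)$ is then immediate from Proposition~\ref{EffOrbitSect} applied with $k=1$ and $\overline{s_1}\colon P(X/G)\to PX=\PP_1(X)$, $\bar\gamma\mapsto s\circ\bar\gamma$, since $\theta_1\circ\overline{s_1}=\rho_X\circ s\circ(-)=\id_{P(X/G)}$.

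For the lower bounds I would push planners of $X$ forward along $\rho_X$. For category, Proposition~\ref{Catmap} already supplies $\cat(\rho_X)\le\cat^{G,\infty}(X)$, so it suffices to record the elementary fact that a map admitting a section has category at least that of its target: if $X=U_0\cup\dots\cup U_m$ with each $\rho_X|_{U_i}$ nullhomotopic, then on $s^{-1}(U_i)$ the inclusion into $X/G$ equals $\rho_X\circ s|_{s^{-1}(U_i)}$, which factors through $\rho_X|_{U_i}$ and is therefore nullhomotopic; hence $\{s^{-1}(U_i)\}$ is a categorical cover of $X/G$ and $\cat(X/G)\le\cat(\rho_X)\le\cat^{G,\infty}(X)$, which together with the upper bound gives $(1)$. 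For $\tc$ I would argue directly. Given a $(G,k)$-motion planner $\sigma_i\colon U_i\to\PP_k(X)$ on an open cover $\{U_i\}$ of $X\times X$ realizing $\tc^{G,k}(X)$, set $W_i:=(s\times s)^{-1}(U_i)$, an open cover of $X/G\times X/G$, and define $\tau_i\colon W_i\to P(X/G)$ by applying $\rho_X$ to each path-component of $\sigma_i(s([x]),s([y]))$ and concatenating. This is legitimate precisely because $G\gamma_j(1)=G\gamma_{j+1}(0)$ forces the pushed-forward paths $\rho_X\circ\gamma_j$ to be composable, and continuity of $\tau_i$ is exactly the continuity of the map $\theta_k$ recalled earlier; moreover $\pi\circ\tau_i\simeq\mathrm{incl}_{W_i}$ follows by applying $\rho_X\times\rho_X$ to the homotopy $\pi_k\circ\sigma_i\simeq\id_{U_i}$. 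Hence $\tc(X/G)\le\tc^{G,k}(X)$ for every $k$, and in particular $\tc(X/G)\le\tc^{G,\infty}(X)$, so $(2)$ follows.

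The genuine subtleties are all bookkeeping. First, throughout this argument ``section'' must be read up to homotopy, since the motion planners only satisfy $\pi_k\circ s\simeq\id$; the resulting homotopies are transported through $\rho_X\times\rho_X$ without trouble because that map is continuous, but the verification should be spelled out. Second, the asymmetry $s([x])\neq x$ is the reason the broken paths in the upper-bound construction have length $3$ rather than $1$: the two constant segments $c_{x_0}$ and $c_x$ absorb the discrepancy between a point and its chosen orbit representative. As a byproduct the proof actually shows the stronger assertion $\cat^{G,k}(X)=\cat(X/G)$ and $\tc^{G,k}(X)=\tc(X/G)$ for all $k\ge 3$, so the stabilization in the definition of $\cat^{G,\infty}$ and $\tc^{G,\infty}$ occurs by stage $3$; I expect the write-up to make this explicit, as it is what guarantees the stated equalities without any further analysis of the stabilization index.
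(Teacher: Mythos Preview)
Your proof is correct and follows essentially the same strategy as the paper: lift paths through the strict section $s$ for the upper bounds, and push planners down via $\rho_X$ while pulling covers back via $s$ (resp.\ $s\times s$) for the lower bounds. The only cosmetic differences are that the paper constructs the upper bounds directly rather than invoking Propositions~\ref{EffOrbitSect} and~\ref{Catmap}, and that for the category upper bound the paper uses $2$-broken paths $(\,\overline{s}[\sigma_i([x])],\,c_x\,)$ (implicitly taking the basepoint $x_0$ in the image of $s$) to obtain the slightly sharper $\cat^{G,2}(X)\le\cat(X/G)$, whereas your extra constant segment $c_{x_0}$ yields stage~$3$; either suffices for the stated theorem.
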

	\begin{proof}
		Let us prove the second claim. Start by considering an open cover  $\{ U_i \}_{0 \leq i \leq n}$ with $U_i \subset X/G \times X/G$ and a local section $\sigma_i : U_i \rightarrow P(X/G) $ of $\pi_1$ for each $0 \leq i \leq n$. Now put $V_i := (\rho_X \times \rho_X)^{-1}(U_i)$ an open set in $X \times X$, and consider the map induced at the level of path spaces by the section $s$, i.e. $$\begin{tikzcd}[row sep=0pt,column sep=1pc]
			\overline{s} \colon P(X/G) \arrow{r} & PX \\
			{\hphantom{(\gamma_0) \colon{}}} \overline{\gamma} \arrow[mapsto]{r} & \overline{s}(\overline{\gamma})(t) = s(\overline{\gamma}(t)). 
		\end{tikzcd} $$ Now we can define a local section of the effective fibration $\pi_3\colon \PP_3(X) \rightarrow X \times X$, denoted $\varsigma_i : V_i  \rightarrow \PP_3(X)$, by the expression $$ \varsigma_i(x,y) := (c_x, \overline{s}[\sigma_i([x],[y])], c_y). $$ This shows that $\tc^{G, \infty}(X) \leq \tc(X/G)$. 
		
		For the reverse inequality, let $n := \tc^{G,\infty}(X)$, and consider an open cover $\{ V_i \}_{0 \leq i \leq n}$ of $X \times X$, and $\varsigma_i : V_i \rightarrow \PP_k(X)$ as a continuous local section for the effective fibration $\pi_k \colon \PP_k(X) \rightarrow X \times X$ for some $k >0$ realizing $\tc^{G,\infty}(X)$. Define $\overline{\rho_x} \colon \PP_k(X) \rightarrow P(X/G)$, a map induced in $\PP_k(X)$ by the orbit map, by projecting any $k$-broken path $\gamma = (\gamma_1, \cdots, \gamma_k) \in \PP_k(X)$ through the orbit map, and concatenating, for each $1 < j < k$, the end point of $\rho_X(\gamma_j)$ with the initial point of $\rho_X(\gamma_{j+1})$, i.e. $$ \overline{\rho_X}(\gamma_1, \cdots, \gamma_k) = (\rho_X \circ \gamma_1) \ast \cdots \ast (\rho_X \circ \gamma_k). $$ Finally, observe that for each $0 \leq i \leq n$ the composite map $$\xi_i := \overline{\rho_x} \circ \varsigma_i \circ (s \times s)$$ defines a local section of $\pi_1 \colon P(X/G) \rightarrow X/G \times X/G$ over $U_i := (s \times s)^{-1}(V_i)$, and so $$ \tc(X/G) \leq \tc^{G,k}(X). $$ 
		
		\medskip
		
		With this approach in mind, the proof of $1.$ is, essentially, analogous. Start by considering $\{ U_i \}_{0 \leq i \leq m}$, a categorical open cover for $\cat(X/G)$.  If we regard $\cat$ as a sectional category we have, for each $0 \leq i \leq m$, a local section $\sigma_i \colon U_i \rightarrow P(X/G)$. Define now, as above, $V_i = (\rho_X \times \rho_X)^{-1}(U_i)$ and a local section for $q_2$ by $$ \varsigma_i(x) := (\overline{s}[\sigma_i([x])],c_x). $$ This shows that $\cat^{G,2}(X) \leq \cat(X/G)$. For the reverse inequality, if we have an open cover $\{ V_i \}_{0 \leq i \leq m}$ and local sections $\varsigma_i \colon V_i \rightarrow P_*^k(X)$ of the fibration $q_k$ for some $k$ realizing $\cat^{G,\infty}(X)$ then, putting $U_i = (s \times s)^{-1}$ we can define a local section of $\ev_1 \colon P(X/G) \rightarrow X/G$ over $U_i$ by $$ \xi_i := \lambda_k \circ \varsigma_i \circ s $$ where $\lambda_k$ is as defined in the proof of Proposition \ref{Catmap}.
	\end{proof}
	
	Let explore some examples of the theorem above:
	
	\begin{example} \label{ExStrictSect}
		
		\begin{enumerate}[(1)]
			\item As an immediate consequence we obtain that $\tc^{\ZZ_2, \infty}(S^n) = 0$ when the action is the flip (i.e. reflection interchanging the hemispheres and fixing the equator). Though this was computed in \cite[Proposition 5.7]{BlKa2}, Theorem \ref{EfOrbSect} provides a more general and conceptual explanation to it. 
			
			\item  Recall that the unitary group $\U(n)$ fits inside a split short exact sequence of groups of the form $$ \SU(n) \hookrightarrow \U(n) \rightarrow \U(1) \cong S^1. $$ Hence, by Theorem \ref{EfOrbSect} $$ \tc^{\SU(n), \infty}(\U(n)) = \tc(S^1) = 1.$$
			
			\item Recall that the special orthogonal group, denoted $SO(n)$, is the group of orthogonal matrices in the $n$-dimensional euclidean space with determinant equal to $1$. The principal bundle $$ \SO(3) \rightarrow \SO(4) \rightarrow S^3 $$ has a section, and consequently $$ \tc^{\SO(3), \infty}(\SO(4)) = \tc(S^3) = 1. $$ Later on we will see more applications of our results to more general cases of $\SO(n)$. 
			
			\item As illustrated in the previous two cases, split Lie group extensions are a rich source of examples for $G$-spaces with strict sections for their orbit map. Other examples of split exact sequences of groups in the spirit of the previous example can be obtained in the following manner: let $p > 2$ be a prime integer, $r \geq 1$ and define the central product $$ S(p^r,p^r)  = \SU(p^r) \times_{\Gamma_{p^r}} \SU(p^r) $$ where $\Gamma_{p^r}$ corresponds with the diagonal cyclic subgroup of the center of order $p^r$. Now one can make $\SU(p^r)$ act on $S(p^r,p^r)$ by left action on just the first coordinate of the central product. Under this action we obtain a principal bundle $$ \SU(p^r) \rightarrow S(p^r,p^r) \rightarrow \PU(p^r) $$ and such bundle has, indeed, a global section. Hence we get $$\tc^{\SU(p^r), \infty}(S(p^r,p^r)) = \cat(\PU(p^r)) = 3(p^r-1)$$ where the last equality was computed in \cite{IMN05}. 
			
			\item Let $X$ be a based space, and $G$ any group. Construct the space $Z = \displaystyle{\vee_{g \in G} X_g}$ defined by $X_g = X$, and equipped with a $G$-action given by $h x_g = x_{hg}$ for $x_g = x \in X_g$. Then $$\tc^{G, \infty}(Z) = \tc(X).$$  
		\end{enumerate}	
		
	\end{example}
	
	The last case of the previous example allows us to give an easy realization result for effective topological complexity:
	
	\begin{corollary}
		Let $G$ any finite group and $n \geq 0$ a non-negative integer. Then there exists a $G$-space $X$ such that $\tc^{G, \infty}(X) = n$.
	\end{corollary}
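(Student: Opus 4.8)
The plan is to invoke item (5) of Example \ref{ExStrictSect}, which for an arbitrary based space $Y$ and an arbitrary group $G$ produces a $G$-space $Z = \bigvee_{g \in G} Y_g$ (with $Y_g = Y$ and $G$ acting by permuting the wedge summands, $h\cdot x_g = x_{hg}$) satisfying $\tc^{G,\infty}(Z) = \tc(Y)$. Thus the corollary is reduced to the purely non-equivariant question of realizing an arbitrary non-negative integer $n$ as the (reduced) topological complexity of a based CW-complex.

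First I would handle that realization. For $n = 0$ take $Y = \ast$, so that trivially $\tc(\ast) = 0$. For $n \geq 1$ take the $n$-torus $Y = T^n = (S^1)^{\times n}$; the classical computation of the topological complexity of tori gives $\tc(T^n) = n$ in the reduced convention used throughout this paper (any other space known to have topological complexity $n$ would serve equally well). Then, $G$ being finite, $X := \bigvee_{g \in G} Y_g$ is again a well-pointed CW-complex carrying a cellular $G$-action, and Example \ref{ExStrictSect}(5) yields $\tc^{G,\infty}(X) = \tc(Y) = n$, as desired.

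Since Example \ref{ExStrictSect}(5) is already available, no genuine obstacle remains and the argument is essentially formal. The only points deserving a line of care are the standing conventions of the paper: one should observe that a finite wedge of well-pointed CW-complexes is again well pointed and CW, and that permutation of the wedge factors is a cellular action, so that $X$ legitimately belongs to the class of spaces under consideration. As an alternative to citing Example \ref{ExStrictSect}(5) directly, one can argue from Theorem \ref{EfOrbSect}: the orbit space $Z/G$ is homeomorphic to $Y$, and the inclusion of the wedge summand indexed by the identity element, $Y_e \hookrightarrow Z$, is a strict section of $\rho_Z$, whence Theorem \ref{EfOrbSect}(2) gives $\tc^{G,\infty}(Z) = \tc(Z/G) = \tc(Y) = n$.
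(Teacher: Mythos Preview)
Your proposal is correct and follows essentially the same approach as the paper: realize $n$ as $\tc(Y)$ (the paper also suggests $Y=T^n$), form the wedge $X=\bigvee_{g\in G}Y_g$ with the permutation $G$-action, and invoke Example \ref{ExStrictSect}(5) / Theorem \ref{EfOrbSect}. Your added remarks on the $n=0$ case, the CW and well-pointedness of the wedge, and the explicit section $Y_e\hookrightarrow X$ are helpful elaborations but do not change the underlying argument.
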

	
	\begin{proof}
		Consider a space $Y$ with $\tc(Y) = n$ (an easy example is $Y = T^n$). Now, construct the space $X = \displaystyle{\vee_{g \in G} Y_g}$ defined in the same manner as in Example \ref{ExStrictSect} (3) above. As a consequence of Theorem \ref{EfOrbSect} we have that $$ \tc^{G,\infty}(X) = \tc(Y) = n. $$ \end{proof}

	\subsection{Orbit map is a fibration.}
	
	In this case, the situation has richer derivations, but requires a bit more subtlety. The equality obtained in the presence of a strict section is not always possible. However, we can collapse both effective LS category and topological complexity at stage 2, and bound them both by their corresponding non-effective counterparts of the orbit space by the group action, as the following theorem shows. 
	
	\begin{theorem}\label{EffOrbFibr}
		Let $X$ be a $G$-space such that the orbit map $\rho_X \colon X \rightarrow X/G$ is a fibration. Then:
		\begin{enumerate}[(1)]
			\item $\cat^{G, \infty}(X) = \cat^{G, 2}(X) = \cat(\rho_X) \leq \cat(X/G)$.
			\item $\tc^{G, \infty}(X) = \tc^{G,2}(X) \leq \tc(X/G)$.
		\end{enumerate}
	\end{theorem}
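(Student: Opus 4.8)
The plan is to reduce both assertions to the Hurewicz path-lifting function of $\rho_X$. Recall that $\rho_X\colon X\to X/G$ being a fibration is equivalent to the existence of a continuous map $$\Lambda\colon\{(x,\omega)\in X\times P(X/G)\mid\omega(0)=\rho_X(x)\}\longrightarrow PX,\qquad\Lambda(x,\omega)(0)=x,\quad\rho_X\circ\Lambda(x,\omega)=\omega;$$ this path-lifting function is the only non-formal ingredient, everything else---concatenation, halving and reversal of paths, post-composition with $\rho_X$---being visibly continuous. I will also use the elementary remark that if $g\colon E'\to E$ is a continuous map over a base $B$, meaning $p_E\circ g=p_{E'}$, then $\secat(p_E)\le\secat(p_{E'})$, since $g$ carries a local section (up to homotopy) of $p_{E'}$ to one of $p_E$. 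As always $X$, hence $X/G$, is taken path-connected.

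For part (1) the crux is the inequality $\cat^{G,2}(X)\le\cat(\rho_X)$. Let $\{U_i\}_{i=0}^{n}$ be a categorical open cover of $X$ for $\cat(\rho_X)$, with nullhomotopies $H_i\colon U_i\times I\to X/G$ of $\rho_X|_{U_i}$ and a path $\tau_i$ in $X/G$ from $\rho_X(x_0)$ to the constant value of $H_i(-,1)$. Put $\bar\omega_i(x):=\tau_i\ast\overline{H_i(x,-)}$, a path in $X/G$ from $\rho_X(x_0)$ to $\rho_X(x)$ depending continuously on $x\in U_i$, and define $s_i\colon U_i\to P^2_*(X)$ by $s_i(x)=\bigl(\Lambda(x_0,\bar\omega_i(x)),\,c_x\bigr)$. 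Since $\Lambda(x_0,\bar\omega_i(x))$ starts at $x_0$ and ends in the orbit $Gx$, while $c_x$ starts and ends at $x$, the pair lies in $\PP_2(X)$ and $s_i$ is a genuine local section of $q_2$ over $U_i$; hence $\cat^{G,2}(X)\le n$. Together with $\cat(\rho_X)\le\cat^{G,\infty}(X)$ (Proposition \ref{Catmap}) and $\cat^{G,\infty}(X)\le\cat^{G,2}(X)$ (inequality \ref{effcatleqcat}) this forces the three invariants to coincide; and $\cat(\rho_X)\le\cat(X/G)$ because the $\rho_X$-preimages of a categorical cover of $X/G$ form a categorical cover for the map $\rho_X$.

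For part (2) I first prove $\tc^{G,k}(X)=\tc^{G,2}(X)$ for every $k\ge2$; since then $(\tc^{G,k}(X))_k$ is constant from $k=2$ on, the definition of $\tc^{G,\infty}$ yields $\tc^{G,\infty}(X)=\tc^{G,2}(X)$. The bound $\tc^{G,k}(X)\le\tc^{G,2}(X)$ is Lemma \ref{LemmaMonotEffTC}(1)(b); for the reverse I build a map $r\colon\PP_k(X)\to\PP_2(X)$ over $X\times X$: given $(\gamma_1,\dots,\gamma_k)\in\PP_k(X)$, concatenate the orbit projections into $\bar\gamma=(\rho_X\gamma_1)\ast\cdots\ast(\rho_X\gamma_k)$, split it into its two halves $\bar\gamma=\bar\alpha\ast\bar\beta$, set $\alpha=\Lambda(\gamma_1(0),\bar\alpha)$ and $\beta=\bigl(\Lambda(\gamma_k(1),\bar\beta^{-1})\bigr)^{-1}$, and let $r(\gamma_1,\dots,\gamma_k)=(\alpha,\beta)$. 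Then $\rho_X\alpha(1)=\bar\alpha(1)=\bar\beta(0)=\rho_X\beta(0)$, so $(\alpha,\beta)\in\PP_2(X)$, and $\pi_2(\alpha,\beta)=(\alpha(0),\beta(1))=(\gamma_1(0),\gamma_k(1))=\pi_k(\gamma_1,\dots,\gamma_k)$; thus $r$ is a map over $X\times X$ and the monotonicity remark gives $\tc^{G,2}(X)=\secat(\pi_2)\le\secat(\pi_k)=\tc^{G,k}(X)$. For the last bound $\tc^{G,2}(X)\le\tc(X/G)$, note that $\rho_X\times\rho_X$ is again a fibration, so Theorem \ref{TheoremSecatProperties}(3) gives $\secat\bigl((\rho_X\times\rho_X)^{*}\pi_{X/G}\bigr)\le\secat(\pi_{X/G})=\tc(X/G)$, where $\pi_{X/G}\colon P(X/G)\to X/G\times X/G$ is the path fibration; and the identical ``split and lift both halves'' recipe, applied to a point $(x,y,\omega)$ of the pullback $\{(x,y,\omega)\mid\omega\in P(X/G),\ \omega(0)=\rho_X(x),\ \omega(1)=\rho_X(y)\}$, produces a map into $\PP_2(X)$ over $X\times X$, whence $\tc^{G,2}(X)=\secat(\pi_2)\le\secat\bigl((\rho_X\times\rho_X)^{*}\pi_{X/G}\bigr)\le\tc(X/G)$.

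I do not expect a genuine obstacle: the whole argument is a matter of making continuous choices of lifts, which is exactly what $\Lambda$ supplies, while all the path operations involved are visibly continuous. The only points demanding a little care are the basepoint bookkeeping in part (1), harmless once $X/G$ is path-connected, and the verification that $\pi_2\circ r=\pi_k$, which is what makes the effective topological complexity stabilize already at stage $2$ and hence equal $\tc^{G,2}(X)$.
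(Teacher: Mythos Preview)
Your proof is correct and follows the same core idea as the paper's---use the lifting property of $\rho_X$ to convert arbitrary broken paths (or paths in $X/G$) into $2$-broken paths---but your packaging is a bit more structural. The paper works locally: it takes an open cover with sections of $\pi_k$ (resp.\ of the path fibration on $X/G$), pushes each section down to a homotopy $U_i\times I\to X/G$, invokes the homotopy lifting property once to get $K_i\colon U_i\times I\to X$ starting at $x$, and declares the $2$-broken section to be $(K_i((x,y),\cdot),c_y)$. You instead use the Hurewicz lifting function $\Lambda$ to build \emph{global} maps $r\colon\PP_k(X)\to\PP_2(X)$ and $(\rho_X\times\rho_X)^*P(X/G)\to\PP_2(X)$ over $X\times X$, and then appeal once to the monotonicity of $\secat$ under maps over the base; this is cleaner and avoids ever touching open covers in part~(2). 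Your symmetric ``split into halves and lift each half from its endpoint'' is a harmless variant of the paper's asymmetric ``lift the whole path from $x$ and append $c_y$''; the latter is marginally simpler but yours works just as well. For part~(1) the two arguments are essentially identical, with your extra path $\tau_i$ making explicit the basepoint adjustment that the paper absorbs into the choice of $H_i$.
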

	\begin{proof}
		\begin{enumerate}[(1)]
			
			\item Let $\{ U_i \}_{0 \leq i \leq n}$ be a categorical open cover of $X$ for $\cat(\rho_X)$. By the hypothesis of nullhomotopy of $\rho_X$ over every $U_i$, it is possible to construct a family of homotopies of the form $$ H_i : U_i \times I \rightarrow X/G, \qquad  H_i(x,0) = \rho_X(x_0), \qquad H_i(x,1) = \rho_X(x).$$ Since $\rho_X$ is a fibration, by the homotopy lifting property $H_i$ can be lifted through $\rho_X$ to a homotopy $K_i : U_i \times I \rightarrow X$ satisfying $$K_i(x,0) = x_0, \qquad \rho_X \circ K_i = H_i. $$ Define now, for every $0 \leq i \leq n$, a map $$s_i: U_i \rightarrow P^2_{\ast}(X) \qquad \mbox{by } s_i(x) = (K_i(x,\cdot), c_x). $$ It is clear that $s_i$ constitutes a local section for the fibration $q_2 \colon P^2_*(X) \rightarrow X$ over $U_i$, and therefore $$\cat^{G,2}(X) \leq \cat(\rho_X).$$ By Proposition \ref{Catmap}, this means that $$ \cat^{G, \infty}(X) = \cat^{G,2}(X) = \cat(\rho_X)$$ and the last inequality of the claim follows from usual properties of the category of a map.  
			
			\item Now let $\{ U_i \}_{0 \leq i \leq n}$ be an open cover of $X \times X$ such that there exists, for every $0 \leq i \leq n$, a local section $s_i \colon U_i \rightarrow \PP_k(X) $ of the $k$-effective fibration $\pi_k$ over $U_i$, for some $k$ such that $\tc^{G,k}(X) = \tc^{G,\infty}(X)$. Recall from the proof of Theorem \ref{EfOrbSect} that one can define a map $\overline{\rho_X}\colon \PP_k(X) \rightarrow X/G \times X/G$ induced by the orbit projection map $\rho_X$ by $$ \overline{\rho_X}(\gamma_1, \cdots, \gamma_k) = (\rho_X \circ \gamma_1) \ast \cdots \ast (\rho_X \circ \gamma_k). $$ Through the map $\overline{\rho_X}$, every local section $\sigma_i$ defines a homotopy $$ H_i: U_i \times I \rightarrow X/G$$ by putting $$ H_i((x,y),0) = \overline{\rho_X}(s_i(x,y))(0) = \rho_X(x),  \qquad H_i((x,y),1) = \overline{\rho_X}(s_i(x,y))(1) = \rho_X(y). $$ Since $\rho_X$ is a fibration by hypothesis, we have a lifting for $H_i$, the homotopy $$ K_i: U_i \times I \rightarrow X $$ satisfying $$ K_i((x,y),0) = x, \qquad \rho_X \circ K_i = H_i. $$ Through this homotopy it is possible to define a local section $\sigma_i \colon U_i \rightarrow \PP_2(X)$ of the effective fibration $\pi_2$ over $U_i$, by putting $$ \sigma_i(x,y) := (K_i((x,y),\cdot),c_y) $$ which shows that $\tc^{G,\infty}(X) = \tc^{G,2}(X)$. 
			
			For the last inequality, consider $\{ V_i \}_{0 \leq i \leq m}$ an open cover of $X/G \times X/G$ such that there exists, for each $0 \leq i \leq m$, a local section over $V_i$ of the path space fibration $\pi \colon P(X/G) \rightarrow X/G \times X/G$. Define for each $i$ a homotopy $$ P_i: V_i \times I \rightarrow X/G$$ satisfying $P_i(([x],[y]),0) = [x]$ and $P_i(([x],[y]),1) = [y]$ for each $([x],[y]) \in V_i$, and put $W_i := (\rho_X \times \rho_X)^{-1}(V_i)$. There are induced homotopies $$ W_i \times I \xrightarrow{(\rho_X \times \rho_X)\times \id_{I}} V_i \times I \xrightarrow{P_i} X/G $$ and, since $\rho_X$ is a fibration, we can lift them to obtain new homotopies $$Q_i \colon W_i \times I \rightarrow X$$ such that $$ \rho_X \circ Q_i = P_i \circ ((\rho_X \times \rho_X)_{\vert_{ W_i}} \times \id_I) $$ and consequently $$ Q_i((x,y),0) = x \qquad Q_i((x,y),1) = z \in [y].$$ Through this last family of homotopies, a local section $\lambda_i: W_i \rightarrow \PP_2(X) $ for the effective fibration $\pi_2$ can then be defined by putting $$ \lambda_i(x,y) := (Q_i((x,y),\cdot),c_y)$$ thus $ \tc^{G,2}(X) \leq \tc(X/G)$.
		\end{enumerate} \end{proof}
	Whenever $G$ is a discrete group acting properly discontinuously on $X$, Theorem \ref{EffOrbFibr} recovers the bound of effective topological complexity by $\tc(X/G)$ of \cite[Theorem 1.1]{Gonzalez21}. However, the situation is much more interesting if we are considering actions of compact Lie groups.
	\begin{example}
		\begin{enumerate}[(a)]
			\item 	Under the identification of $S^1$ as the topological unitary group $U(1)$, we have a very well known fibre bundle $$ S^1 \hookrightarrow S^{2n+1} \rightarrow \mathbb{C}P^n. $$ As a consequence of (2) of Theorem \ref{EffOrbFibr}, we see that $$ \tc^{S^1, \infty}(S^{2n+1}) \leq \tc(\CC P^n) = 2n $$ (where the value of $\tc(\CC P^n)$ was computed in \cite{Farber2003}). In this case, however, the bound provided by theorem is far from a sharp one. Notice that we can consider the subgroup inclusion $\ZZ_p \leqslant S^1$ and hence, by virtue of Lemma \ref{LemmaMonotEffTC} and Theorem \ref{EffSpheres} one gets $$ \tc^{S^1, \infty}(S^{2n+1}) \leq \tc^{\ZZ_p, \infty}(S^{2n+1}) = 1 $$  and, as a consequence of Proposition \ref{OrbitNullHom}, $\tc^{S^1, \infty}(S^{2n+1}) = 1$.
			
			Furthermore, we can take the principal bundle associated to the classifying space of $U(1)$, $$ S^1 \hookrightarrow E\U(1) \rightarrow B\U(1) \qquad \equiv \qquad S^1 \hookrightarrow S^{\infty} \rightarrow \CC P^{\infty}$$ and in this case the contractibility of $S^{\infty}$ implies that $$ \tc^{S^1, \infty}(S^{\infty}) = 0. $$
			
			\item It is well known that the identification map  (sometimes called ''realification") $$ \phi \colon \CC^{n\times n} \rightarrow \RR^{2n \times 2n}$$ given by putting $$ C := A + iB \mapsto \begin{bmatrix}
				A & -B \\[1ex]
				B & A 
			\end{bmatrix} $$ allows to identify the linear group $\U(n)$ as a subgroup of $\SO(2n)$. To be more specific, it can be shown that $$ \phi(\U(n)) = \SO(2n) \cap \phi(\GL(n, \CC)). $$ There is then a principal $\U(3)$-bundle $$ \U(3) \hookrightarrow \SO(6) \rightarrow \CC P^3 $$ which, in conjunction with Theorem \ref{EffOrbFibr} informs us that $$ \tc^{\U(3), \infty}(\SO(6)) \leq \tc(\CC P^3) = 6. $$
			
			\item Think of the $S^{2n+1}$ sphere immersed in the $(n+1)$-dimensional complex space $\CC^{n+1}$. Recall that the map $T \colon \CC^{n+1} \rightarrow \CC^{n+1}$ defined as the scalar multiplication by the $p$-th root of unity, (i.e. $T(z) = \exp(2 \pi i/p)z$ for $z \in S^{2n+1}$) generates the standard complex representation of the cyclic group $\ZZ_p$. This induces a free action on $S^{2n+1}$ under a complex unitary map of period $p$. The orbit space of such action is the well known lens space $L^{2n+1}_p$. 
			
			We can consider, in $S^{2n+1}$, the scalar multiplication of $z \in S^{2n+1}$ by $\exp(2\pi ix/p)$, where $x \in \RR$. This operation defines a group homomorphism $g \colon \RR \rightarrow \Aut(S^{2n+1})$ which commutes with the induced periodic map $T \colon S^{2n+1} \rightarrow S^{2n+1}$. Consequently, $g$ induces an action of $\RR$ on the lens space $L^{2n+1}_p$ through an induced homomorphism $\overline{g} \colon \RR \rightarrow \Aut(L^{2n+1}_p)$. If we take an integer $k$, it is easy to see that $\exp(2\pi ik/p) = (\exp(2 \pi i/p)^k)$, which informs us that the integers act trivially on $L^{2n+1}_p$. Therefore, the map $\overline{g}$ factors through the exponential map and it subsequently induce an action of $S^1$, regarded as the circular group, on the lens space $L^{2n+1}_p$, defined explicitely as $$ s\cdot [z] = [\exp(2\pi ix/p)z]$$ for $z \in S^{2n+1}$, $[z] \in L^{2n+1}_p$ and $s \in S^1$, $x \in \RR$ such that $s = \exp(2\pi i x)$. Jaworowski, in \cite{Jaworowski99}, demonstrated that such an action is free and, furthermore, that the orbit space under it corresponds with the complex projective space $\CC P^n$. Therefore, by Theorem \ref{EffOrbFibr} we see that $$ \tc^{S^1, \infty}(L^{2n+1}_p) \leq \tc(\CC P^n) = 2n. $$

			\item Although the situation is significantly more complicated in the case of real projective spaces, we can still make use of the known topological complexity of $\RR P^n$ for certain values of $n$ to derive even more examples from Theorem \ref{EffOrbFibr}. As it is discussed in \cite[Section 4]{IMN05}, we have the following principal bundles of compact Lie groups over real projective spaces: $$ \Sp(1) \rightarrow \SO(5) \rightarrow \RR P^7, \qquad  \SU(3) \rightarrow \SO(6) \rightarrow \RR P^7, $$ $$ G_2 \rightarrow \SO(7) \rightarrow \RR P^{15}, \qquad \Spin(7) \rightarrow \SO(9) \rightarrow \RR P^{15}, $$ $$ G_2 \rightarrow \PO(8) \rightarrow \RR P^7 \times \RR P^7.  $$ Therefore, by Theorem \ref{EffOrbFibr} and the computation of the topological complexity of real projective spaces in dimension $7$ and $15$ carried out in \cite{Farber2003}, we obtain the inequalities:
			$$ \tc^{\Sp(1), \infty}(\SO(5)) \leq 7, \quad \tc^{\SU(3), \infty}(\SO(6)) \leq 7, $$ $$ \tc^{G_2, \infty}(\SO(7)) \leq 23, \qquad \tc^{\Spin(7), \infty}(\SO(9)) \leq 2, $$ $$ \tc^{G_2, \infty}(\PO(8)) \leq 14. $$	
		\end{enumerate}
	\end{example}
	\medskip
	
	Let $G$ be a matrix Lie group, and $H \leq G$ a closed subgroup. It is a well known fact that $G$ has the structure of a fibre bundle $$ H \hookrightarrow G \xrightarrow{\rho} G/H $$ (see, for example \cite[Proposition 13.8]{Hall15}). In particular, Theorem \ref{EffOrbFibr} produces very easy upper bounds for actions of closed matrix subgroups in their immediate matrix overgroup:
	
	\begin{corollary} \label{MatrixLieGr}
		Let $n \in \NN$. Then the following holds:
		
		\begin{enumerate}[(a)]
			\item $\tc^{\SO(n-1), \infty}(\SO(n)) = 1$ for $n$ even and $\tc^{\SO(n-1), \infty}(\SO(n)) \leq 2$ for $n$ odd. 
			
			\item For $n \geq 2$ we have $\tc^{\U(n-1), \infty}(\U(n)) = 1$.
			
			\item For $n \geq 3$, we have $\tc^{\SU(n-1), \infty}(\SU(n)) = 1$. 
			
			\item For all $n \geq 1$ we have $\tc^{\Sp(n-1), \infty}(\Sp(n)) = 1$.

		\end{enumerate}
		
	\end{corollary}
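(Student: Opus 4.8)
Throughout, I would derive all four statements directly from the fibre bundle description $H \hookrightarrow G \xrightarrow{\rho} G/H$ recorded just above, together with Theorem \ref{EffOrbFibr} and the classical value of the topological complexity of spheres. The relevant homogeneous spaces are $\SO(n)/\SO(n-1) = S^{n-1}$, $\U(n)/\U(n-1) = \SU(n)/\SU(n-1) = S^{2n-1}$ and $\Sp(n)/\Sp(n-1) = S^{4n-1}$, and in each case $\rho$ is a locally trivial fibre bundle, hence a fibration. Part (2) of Theorem \ref{EffOrbFibr} then gives $\tc^{H,\infty}(G) = \tc^{H,2}(G) \le \tc(G/H)$. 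Since $\tc(S^m) = 1$ for $m$ odd and $\tc(S^m) = 2$ for $m$ even (see \cite{Farber03}), and $2n-1$, $2n-1$, $4n-1$ are all odd, this already yields $\tc^{H,\infty}(G) \le 1$ in cases (b), (c), (d); in case (a) it yields $\tc^{H,\infty}(G) \le 1$ when $n$ is even (so $n-1$ is odd) and $\tc^{H,\infty}(G) \le 2$ when $n$ is odd, which is exactly what is claimed in that last case.

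To upgrade the remaining bounds to equalities it suffices to establish $\tc^{H,\infty}(G) \ge 1$, and for this I would appeal to Proposition \ref{OrbitNullHom} in contrapositive form: if $\tc^{H,\infty}(G) = 0$ then $\rho\colon G \to G/H$ is nullhomotopic, so it is enough to check that $\rho$ is \emph{not} nullhomotopic. (Alternatively, the same conclusion runs through $\cat(\rho) \le \cat^{H,\infty}(G) \le \tc^{H,\infty}(G)$ via Proposition \ref{Catmap} and Corollary \ref{catTC0}.) In all cases under consideration $G/H$ is a sphere of positive dimension and $G$ is non-trivial; when $H$ is trivial, $\rho$ is the identity of that sphere and is certainly not nullhomotopic. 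When $H$ is non-trivial I would use that each of these fibrations $H \to G \to G/H$ is totally non-homologous to zero: the restriction $H^*(G) \to H^*(H)$ induced by $H \hookrightarrow G$ is surjective — with $\ZZ$-coefficients in the unitary, special unitary and symplectic cases, where $H^*(G;\ZZ)$ is an exterior algebra on odd-degree generators and restriction simply deletes the top generator, and with $\ZZ_2$-coefficients in the orthogonal case. Hence the Serre spectral sequence collapses and, by Leray--Hirsch, $\rho^*\colon H^d(G/H) \to H^d(G)$ is a split monomorphism, where $d = \dim G/H$; in particular $\rho^*$ is non-zero in positive degree, so $\rho$ cannot be nullhomotopic since a nullhomotopic map induces the zero map on reduced cohomology. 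Combining this with the first paragraph gives $\tc^{H,\infty}(G) = 1$ in (b), (c), (d) and in (a) for $n$ even.

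The bulk of the argument is bookkeeping with the dimensions and parities of the spheres involved, which is entirely routine. The one point that deserves care is the non-nullhomotopy of the orbit maps; I would isolate it as above and reduce it to the classical fact that the inclusions $\SO(n-1)\hookrightarrow\SO(n)$, $\U(n-1)\hookrightarrow\U(n)$, $\SU(n-1)\hookrightarrow\SU(n)$, $\Sp(n-1)\hookrightarrow\Sp(n)$ are cohomology surjections (over $\ZZ_2$ for the orthogonal groups, over $\ZZ$ for the others), the passage to $\ZZ_2$-coefficients being made precisely to avoid the torsion in $H^*(\SO(n);\ZZ)$. No genuinely new difficulty arises: the whole proof is an assembly of Theorem \ref{EffOrbFibr}, Proposition \ref{OrbitNullHom}, and standard facts about the cohomology of classical Lie groups.
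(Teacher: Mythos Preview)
Your proof is correct and follows essentially the same route as the paper: both obtain the upper bounds from Theorem \ref{EffOrbFibr}(2) applied to the classical fibrations $G \to G/H \cong S^d$ together with the known values of $\tc(S^d)$, and both invoke Proposition \ref{OrbitNullHom} to rule out $\tc^{H,\infty}(G)=0$. The only difference is that the paper simply asserts the non-nullhomotopy of the orbit maps as a consequence of Proposition \ref{OrbitNullHom} without further justification, whereas you actually supply an argument (via cohomology surjectivity of the inclusions and Leray--Hirsch); in this respect your write-up is more complete than the paper's.
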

	
	\begin{proof}
		
		The statements have almost analogous proofs. All of them depend on the identification of the orbit maps with fibrations with base spheres of appropiate dimension (to see a proof of these facts see, for example, \cite[Section 13.2]{Hall15}) and on the computation of the standard topological complexity of spheres (see \cite{Farber08}).

		\begin{enumerate}[(a)]
			\item $\SO(n-1)$ acting over $\SO(n)$ fits into a fibration $$ \SO(n-1) \hookrightarrow \SO(n) \xrightarrow{\rho} \quot{\SO(n)}{\SO(n-1)} \cong S^{n-1}.  $$ By (2) of Theorem \ref{EffOrbFibr} we know that $$ \tc^{\SO(n-1), \infty}(\SO(n)) \leq \tc(S^{n-1}) = \begin{cases}
				1 & \mbox{ for n even} \\
				2 & \mbox{ for n odd}
			\end{cases} $$
			
			\item The action of $\U(n-1)$ on its overgroup $\U(n)$ fits into a principal bundle $$ \U(n-1) \hookrightarrow \U(n) \rightarrow \quot{\U(n)}{\U(n-1)} \cong S^{2n-1} $$ which informs us, by virtue of (2) of Theorem \ref{EffOrbFibr}, that $$ \tc^{\U(n-1), \infty}(\U(n)) \leq \tc(S^{2n-1}) = 1. $$
			
			\item The subgroup $\SU(n-1)$ acting over $\SU(n)$ fits into a fibration of the form 
			$$ \SU(n-1) \hookrightarrow \SU(n) \xrightarrow{\rho} \quot{\SU(n)}{\SU(n-1)} \cong S^{2n-1}. $$ Once again, (2) of Theorem \ref{EffOrbFibr} implies that $$ \tc^{\SU(n-1), \infty}(\SU(n)) \leq \tc(S^{2n-1}) = 1. $$
			
			\item Finally, $\Sp(n-1)$ acting over $\Sp(n)$ makes the orbit map projection into a fibration 
			$$ \Sp(n-1) \hookrightarrow \Sp(n) \xrightarrow{\rho} \quot{\Sp(n)}{\Sp(n-1)} \cong S^{4n-1}. $$ As previously, by (2) of Theorem \ref{EffOrbFibr} $$ \tc^{\Sp(n-1), \infty}(\Sp(n)) \leq \tc(S^{4n-1}) = 1. $$
			
		\end{enumerate}
		
		Finally note that, as a consequence of Proposition \ref{OrbitNullHom}, the previously determined upper bounds by $1$ are, indeed, sharp equalities. \end{proof}

	For any pair of numbers $n,k \in \NN$, with $k < n$, we say that a (compact) \textit{Stiefel} manifold over a field $\FF \in \{ \mathbb{R}, \mathbb{C}, \mathbb{H} \}$, denoted by $V_k(\FF^n)$, is the set of $k$-orthonormal tuples of vectors in $\FF^n$, with the subspace topology in $\FF^{n+k}$. Conversely, a $k$-Grassmannian over $\FF^n$ is the set of all possible $k$-dimensional vector subspaces of $\FF^n$. Now recall that the $k$-orthogonal group over $\FF$, denoted by $O(k,\FF)$, is defined as the subgroup of the $k$-general linear group on $\FF$ of orthogonal matrices, i.e. $$ O(k,\FF) = \{ Q \in GL(k,\FF) \mid Q^TQ = QQ^T = I\}. $$ The group $O(k,\FF)$ acts freely on $V_k(\FF^n)$,  by rotating a $k$-frame in the space it spans. The orbits of this action are precisely the orthonormal $k$-frames spanning a given $k$-dimensional subspace, that is, the orbit map corresponds with a fibration (indeed a principal $O(k,\FF)$-bundle) of the form $$O(k,\FF) \hookrightarrow V_k(\FF^n) \xrightarrow{\rho} G_k(\FF^n).$$ If we specialize the concrete choice of the field, we obtain fibrations 
	\begin{equation} \label{Grass2}
		O(k) \hookrightarrow V_k(\RR^n) \xrightarrow{\rho} G_k(\RR^n) \qquad U(k,\CC) \hookrightarrow V_k(\CC^n) \xrightarrow{\rho} G_k(\CC^n)
	\end{equation}
	which allow us to give upper bounds for the effective topological complexity of orthogonal actions on Stiefel manifolds.
	\begin{corollary}
		Let $k,n \in \NN$ with $k < n$. Then the following bounds are satisfied:
		\begin{enumerate}[(a)]
			\item $\tc^{O(k), \infty}(V_k(\RR^n)) \leq 2k(n-k)-1  $
			\item $\tc^{U(k), \infty}(V_k(\CC^n)) \leq 2k(n-k)$
		\end{enumerate}
	\end{corollary}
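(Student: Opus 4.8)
The plan is to recognise the two orbit maps of \eqref{Grass2} as fibrations, apply part~(2) of Theorem~\ref{EffOrbFibr}, and thereby reduce both inequalities to upper bounds for the ordinary topological complexity of the Grassmann manifolds that occur as orbit spaces. Concretely: $O(k)$ acts freely on $V_k(\RR^n)$ with orbit space $G_k(\RR^n)$, and the orbit map is the principal $O(k)$-bundle of \eqref{Grass2}, which --- being a fibre bundle over a paracompact base --- is a Hurewicz fibration; hence Theorem~\ref{EffOrbFibr}(2) yields $\tc^{O(k),\infty}(V_k(\RR^n))\le\tc(G_k(\RR^n))$, and similarly $\tc^{U(k),\infty}(V_k(\CC^n))\le\tc(G_k(\CC^n))$ from the principal $U(k)$-bundle. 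It then remains only to bound $\tc$ of the two Grassmannians by the asserted numbers.

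For (b), $G_k(\CC^n)$ is a simply connected closed manifold with $\dim_\RR G_k(\CC^n)=2k(n-k)$. Using the inequality $\tc(Y)\le\cat(Y\times Y)$ from \eqref{estandarcatTc} and the fact that, since $G_k(\CC^n)\times G_k(\CC^n)$ is simply connected, the dimension--connectivity bound for LS-category gives $\cat\big(G_k(\CC^n)\times G_k(\CC^n)\big)\le\tfrac12\dim\big(G_k(\CC^n)\times G_k(\CC^n)\big)=2k(n-k)$, we obtain $\tc(G_k(\CC^n))\le 2k(n-k)$, which is exactly (b).

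For (a) the same scheme reduces the claim to the estimate $\tc(G_k(\RR^n))\le 2k(n-k)-1$, and this is the step I expect to carry the actual content: $G_k(\RR^n)$ is only path-connected, so the crude bound $\tc(Y)\le 2\dim Y$ merely gives $2k(n-k)$ and one must save a unit. I would get the extra unit from the sharper known upper bounds for the topological complexity of real Grassmannians --- for $k=1$ one has $G_1(\RR^n)=\RR P^{n-1}$, whose topological complexity coincides with its Euclidean immersion dimension and is therefore at most $2(n-1)-1$ by Farber--Tabachnikov--Yuzvinsky, while for $k\ge 2$ one invokes the analogous cohomological/immersion-theoretic estimate --- and then feed it into the inequality supplied by Theorem~\ref{EffOrbFibr}(2). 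Apart from this sharpened bound for the real Grassmannian, which is the main obstacle, every step is formal.
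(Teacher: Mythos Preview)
Your overall strategy---apply Theorem~\ref{EffOrbFibr}(2) to the principal bundles in \eqref{Grass2} and then bound $\tc$ of the base Grassmannians---is exactly the paper's approach. For part~(b) your direct argument via simple connectivity of $G_k(\CC^n)$ and the dimension--connectivity estimate for $\cat$ is correct and in fact a little more self-contained than the paper, which simply cites Pave\v{s}i\'c \cite[Proposition~4.1]{Pavesic21} for the same bound.

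For part~(a), however, the gap you yourself flag is the entire content of the argument. The inequality $\tc(G_k(\RR^n))\le 2k(n-k)-1$ is not a formality: $G_k(\RR^n)$ has $\pi_1=\ZZ_2$, so the crude dimensional bound only gives $2k(n-k)$, and your appeal to an ``analogous cohomological/immersion-theoretic estimate'' for $k\ge 2$ does not name an actual result (immersion-theoretic input of the Farber--Tabachnikov--Yuzvinsky type is specific to $\RR P^m$, and even there the upper bound comes from Whitney, not FTY). The paper closes this gap by invoking Pave\v{s}i\'c \cite[Theorem~4.2]{Pavesic21}, which supplies precisely $\tc(G_k(\RR^n))\le 2k(n-k)-1$. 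So your proposal is structurally correct and complete for (b), but for (a) you are missing exactly the reference that carries the weight.
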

	
	\begin{proof}
		As a consequence of the fibrations in \ref{Grass2} and (2) of Theorem \ref{EffOrbFibr}, we know that $$ \tc^{O(k), \infty}(V_k(\RR^n)) \leq \tc(G_k(\RR^n)) \qquad \mbox{ and } \qquad \tc^{U(k), \infty}(V_k(\CC^n)) \leq \tc(G_k(\CC^n)).  $$ Then both claims follow from the upper bounds for the topological complexity of Grassmann manifolds computed by P. Pave\v{s}i\'c in \cite[Proposition 4.1, Theorem 4.2]{Pavesic21}.
	\end{proof}
	
	 We will briefly recall the definition of more general orthonormal frame bundles defined over smooth manifolds, and we will apply our results to that setting. Let $M$ be an $n$-dimensional (oriented) Riemann manifold, define, for every $x \in M$ the space $$ F_x(M) := \{ (v_1, \cdots, v_n) \in (T_xM)^n \mid (v_1, \cdots, v_n) \mbox{ is a positive orthonormal basis of } T_xM  \}$$ and from it the \emph{space of positive orthonormal frames} of $M$ by putting $$ F(M) := \{ (x,b) \mid x \in M, b \in F_x(M) \}. $$ Build the continuous map $ p_{F(M)} \colon F(M) \rightarrow M$  by $p_{F(M)}(x,b) = x.$ Then $p_{F(M)}$ has the structure of a smooth principal $\SO(n)$-bundle, called the bundle of positive orthonormal frames of $M$. 
	
%	Recall that given a commutative ring $R$, a fiber bundle $p \colon E \rightarrow B$ is \emph{totally non-cohomologous to zero} with respect to $R$ if the inclusion of a fiber $i \colon F \rightarrow E$ induces a surjective map in cohomology $ i^* \colon H^*(E;R) \rightarrow H^*(F;R) $. 
	
	\begin{corollary}\label{TCFrameBundles}
	Let $M$ a path-connected $n$-dimensional smooth manifold, and $p_{F(M)} \colon F(M) \rightarrow M$ defined as above. Then \begin{enumerate}[(a)]
		\item $\tc^{\SO(n),\infty}(F(M)) \leq 2 \dim(M)$.
		\item Furthermore, if $M$ is parallelizable, then $$ \tc^{\SO(n), \infty}(F(M)) =  \tc(M). $$
	\end{enumerate} 
	\end{corollary}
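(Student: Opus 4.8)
The plan is to recognize $p_{F(M)}\colon F(M)\to M$ as the orbit map of the $\SO(n)$-action on $F(M)$ and then feed it into the dichotomy established by Theorems \ref{EfOrbSect} and \ref{EffOrbFibr}. Indeed, $p_{F(M)}$ is a principal $\SO(n)$-bundle, so it is in particular a fibration, and the orbit space $F(M)/\SO(n)$ is canonically identified with $M$ (the fibre over $x$ being exactly $F_x(M)$, a single $\SO(n)$-orbit).

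For part (a), I would simply invoke Theorem \ref{EffOrbFibr}(2) with $X=F(M)$ and $X/G=M$, which immediately gives $\tc^{\SO(n),\infty}(F(M))\leq \tc(M)$. It then remains to recall the standard upper bound $\tc(M)\leq 2\dim(M)$ for a path-connected CW-complex; this follows from the chain $\tc(M)\leq\cat(M\times M)\leq\dim(M\times M)=2\dim(M)$ (using our reduced convention and $\cat$ of a connected CW-complex being bounded by its dimension). Composing the two inequalities yields $\tc^{\SO(n),\infty}(F(M))\leq 2\dim(M)$.

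For part (b), the key observation is that parallelizability forces the frame bundle to be trivial. Starting from a trivialization $TM\cong M\times\RR^n$ and applying fibrewise Gram--Schmidt with respect to the Riemannian metric produces a global orthonormal frame field; since $M$ is path-connected the orientation sign of this field is locally constant, hence constant, so after transposing two of its entries if necessary we obtain a \emph{positively} oriented global orthonormal frame. This is precisely a continuous section $s\colon M\to F(M)$ of $p_{F(M)}$, i.e. the orbit map admits a strict section. Theorem \ref{EfOrbSect}(2) then gives $\tc^{\SO(n),\infty}(F(M))=\tc(M/\SO(n))=\tc(M)$, which is the claimed equality.

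The argument is short because both halves reduce to invoking the section/fibration results of this section; the only genuinely delicate point is the orientation matching in (b), where one must make sure that the frame produced from the parallelization can be taken compatible with the orientation used to define $F(M)$ — and this is exactly where path-connectedness of $M$ (already in the hypotheses) is used.
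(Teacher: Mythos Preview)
Your proof is correct and follows essentially the same approach as the paper: part (a) via Theorem \ref{EffOrbFibr}(2) plus the dimensional bound $\tc(M)\leq 2\dim(M)$, and part (b) via Theorem \ref{EfOrbSect}(2) once parallelizability yields a trivial frame bundle (hence a strict section). The only slip is notational: in the last line of (b) you write $\tc(M/\SO(n))$ where you mean $\tc(F(M)/\SO(n))$.
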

	\begin{proof}
		\begin{enumerate}[(a)]
			\item Given that $p_{F(M)}$ is a principal bundle, we are under the assumptions of Theorem \ref{EffOrbFibr}, hence $$ \tc^{\SO(n),\infty}(F(M)) \leq \tc(F(M)/\SO(n)) = \tc(M) \leq 2 \dim(M) $$ where the last inequality just comes from the well-known dimensional bound of topological complexity, see \cite[Theorem 5]{Farber03}.
			\item Under the hypothesis of $M$ being parallelizable, $p_{F(M)} \colon F(M) \rightarrow M$ becomes a trivial $\SO(n)$-bundle, thus the claim follows from Theorem \ref{EfOrbSect}.
		\end{enumerate} \end{proof} To check computations of usual topological complexity of orthonormal frame bundles we refer the readers to the analysis of S. Mescher on the matter, see \cite{Mescher19}. \medskip
	
	Under nice enough group actions, the quotient space of a smooth manifold is itself a manifold with a smooth structure making the orbit map a fibration. 
	
	\begin{corollary} \label{smoothact}
		Let $G$ be a Lie group acting smoothly, freely and properly on a connected smooth manifold $M$. Then $$ \tc^{G,\infty}(M) \leq 2(\dim(M) - \dim(G)). $$
	\end{corollary}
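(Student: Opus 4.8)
The plan is to invoke the quotient manifold theorem and then reduce to Theorem~\ref{EffOrbFibr}. First I would recall that, since $G$ acts smoothly, freely and properly on the smooth manifold $M$, the orbit space $M/G$ carries a unique smooth structure for which the orbit projection $\rho_M \colon M \to M/G$ is a smooth submersion; in fact $\rho_M$ is a smooth principal $G$-bundle. In particular $\rho_M$ is a fibration with fibre diffeomorphic to $G$, so the local product structure gives $\dim(M/G) = \dim(M) - \dim(G)$ (note that freeness forces $\dim(G) \le \dim(M)$, so the right-hand side is non-negative). Moreover $\rho_M$ is an open surjection, so $M/G$ is connected, hence path-connected, and being a smooth manifold it admits a CW structure of dimension $\dim(M/G)$.

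Next I would apply part~(2) of Theorem~\ref{EffOrbFibr} to the fibration $\rho_M$, which immediately yields
\[
\tc^{G,\infty}(M) = \tc^{G,2}(M) \le \tc(M/G).
\]
Finally, since $M/G$ is a path-connected CW complex of dimension $\dim(M) - \dim(G)$, the classical dimensional upper bound for topological complexity (Farber, \cite[Theorem 5]{Farber03}) gives
\[
\tc(M/G) \le 2\dim(M/G) = 2\big(\dim(M) - \dim(G)\big).
\]
Chaining the two inequalities proves the corollary.

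The argument is essentially a packaging of earlier results, so there is no substantial obstacle; the only points demanding care are checking that the hypotheses of Theorem~\ref{EffOrbFibr} genuinely hold — namely that a smooth free proper action produces an orbit map which is a fibration, which is exactly the content of the quotient manifold theorem / the resulting principal bundle structure — and the bookkeeping of the dimension of $M/G$ so that the final numerical bound comes out as stated. One should also note, as in the proof of Corollary~\ref{TCFrameBundles}, that the dimensional bound for $\tc$ applies because $M/G$ is a CW complex, which is automatic for a smooth manifold.
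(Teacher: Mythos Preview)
Your proof is correct and follows essentially the same route as the paper: invoke the quotient manifold theorem to obtain that $M/G$ is a smooth manifold of dimension $\dim(M)-\dim(G)$ with $\rho_M$ a fibration, apply Theorem~\ref{EffOrbFibr}(2), and finish with Farber's dimensional bound for $\tc$. The only cosmetic difference is that the paper justifies the fibration property via Ehresmann's theorem after the quotient manifold theorem, whereas you appeal directly to the principal $G$-bundle structure; your phrasing is arguably cleaner, since Ehresmann's theorem in its usual form requires the submersion to be proper, which need not hold here when $G$ is non-compact, while the principal bundle structure (hence local triviality) comes straight out of the quotient manifold theorem.
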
 
	\begin{proof}
	By the quotient manifold theorem (see \cite[Theorem 21.10]{Lee}) the orbit space $M/G$ has the structure of a topological manifold with $$\dim(M/G) = \dim(M) - \dim(G)$$ and with an unique smooth structure satisfying that the orbit map $\rho_{M} \colon M \rightarrow M/G$ is a smooth submersion. By the Ehresmann's fibration theorem (see \cite[Theorem 8.5.10]{Dundas}) $\rho_{M}$ is a (locally trivial) fibration, hence Theorem \ref{EffOrbFibr} gives us $$ \tc^{G,\infty}(M) = \tc^{G,2}(M) \leq \tc(M/G) \leq 2(\dim(M) - \dim(G)) $$ where the last inequality just follows from the dimensional upper bound of $\tc$. 
	\end{proof}
	
	We can easily obtain the same inequality for locally smooth free actions though, unlike in the case above, we have to impose compacity as an additional restriction. Let $G$ be this time a compact Lie group, acting over a closed connected smooth manifold $M$. Locally smooth actions come equipped with principal orbits, i.e. orbits of type $G/H$ such that $H$ is subconjugated to any isotropy subgroup $G_x \leqslant G$. By virtue of \cite[Theorem IV.3.8]{Bredon72} we have that $$ \dim(M/G) = \dim(M) - \dim(P). $$ If, furthermore, the action of $G$ is taken as free, the orbit projection map $\rho_M \colon M \rightarrow M/G$ becomes a fibration, hence Theorem \ref{EffOrbFibr} applies, and we have $$ \tc^{G,\infty}(M) \leq \tc(M/G) \leq 2 \dim(M/G) $$ and, since $\dim(P) = \dim(G)$, we immediately get \begin{equation}\label{Locsmooth}
		\tc^{G,\infty}(M) \leq 2(\dim(M) - \dim(G)).
	\end{equation} Compare both Corollary \ref{smoothact} and inequality \refeq{Locsmooth} above with the upper bounds for usual topological complexity of smooth manifolds with locally smooth free actions obtained by M. Grant in \cite{GrantFib}.
	
	\medskip
	
	%\texttt{Un ejemplo de G-fibrado principal con Grassmannianas de espacio total y base conocida creo que sería interesante}
	
	It is possible to find conditions under which the effective LS category and the regular LS category coincide in this setting:
	
	\begin{corollary}\label{CorBasePoint}
		For a group $G$, if $X$ is a $G$-space such that the orbit of the base point $G x_0$ is contractible in $X$ then $\cat^{G, \infty}(X) = \cat(X)$.
	\end{corollary}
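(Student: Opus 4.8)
The plan is to deduce this from Theorem \ref{EffOrbFibr} — so we work in the situation where the orbit map $\rho_X\colon X\to X/G$ is a fibration — together with a homotopy-lifting argument that feeds in the hypothesis on $Gx_0$. By \eqref{effcatleqcat} one always has $\cat^{G,\infty}(X)\le\cat(X)$, so the entire content of the corollary is the reverse inequality. Theorem \ref{EffOrbFibr} identifies $\cat^{G,\infty}(X)=\cat(\rho_X)$, and $\cat(\rho_X)\le\cat(X)$ holds trivially (composing a nullhomotopy of the inclusion $V\hookrightarrow X$ with $\rho_X$ shows $\rho_X|_V$ is nullhomotopic). Hence the corollary reduces to the single inequality $\cat(X)\le\cat(\rho_X)$.

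To prove it, pick an open cover $\{V_0,\dots,V_m\}$ of $X$ realising $\cat(\rho_X)=m$, so that $\rho_X|_{V_i}$ is nullhomotopic for each $i$. Since $X/G$ is path-connected we may take the nullhomotopy to end at the constant map at $[x_0]:=\rho_X(x_0)$, say $H_i\colon V_i\times I\to X/G$ with $H_i(-,0)=\rho_X|_{V_i}$ and $H_i(-,1)\equiv[x_0]$. Because $\rho_X$ is a fibration, the homotopy lifting property applied to $H_i$ with initial lift the inclusion $V_i\hookrightarrow X$ yields $\widetilde H_i\colon V_i\times I\to X$ with $\widetilde H_i(-,0)=\mathrm{incl}_{V_i}$ and $\rho_X\circ\widetilde H_i=H_i$. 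Then $\rho_X\circ\widetilde H_i(-,1)\equiv[x_0]$, so $\widetilde H_i(-,1)$ takes values in $\rho_X^{-1}([x_0])=Gx_0$.

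This is where the hypothesis enters: $\widetilde H_i$ is a homotopy from $\mathrm{incl}_{V_i}$ to a map that factors through the inclusion $Gx_0\hookrightarrow X$, and since $Gx_0$ is contractible in $X$ that inclusion — hence $\widetilde H_i(-,1)$, hence $\mathrm{incl}_{V_i}$ — is nullhomotopic. Thus every $V_i$ is categorical in $X$, so $\cat(X)\le m=\cat(\rho_X)$, and combining with the reduction above we get $\cat^{G,\infty}(X)=\cat(\rho_X)=\cat(X)$.

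The point to be careful about is that the lifted homotopy terminates \emph{exactly} in the fibre $\rho_X^{-1}([x_0])=Gx_0$ and not in some other orbit: this is what forces us (i) to have $\rho_X$ be a fibration so that the lift exists, and (ii) to arrange the original nullhomotopy of $\rho_X|_{V_i}$ to end at the specific constant $[x_0]$, which uses path-connectedness of $X/G$ from our standing conventions. Granting these, the argument is just the familiar principle that a fibration whose fibre is contractible in the total space cannot decrease the Lusternik-Schnirelmann category, specialised to the orbit map.
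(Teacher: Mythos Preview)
Your argument is correct and follows essentially the same route as the paper's: both reduce the statement to showing $\cat(X)\le\cat(\rho_X)$ via Theorem~\ref{EffOrbFibr}(1), then lift a nullhomotopy of $\rho_X|_{V_i}$ through the fibration $\rho_X$ so that the terminal map lands in the fibre $Gx_0$, and finally use contractibility of $Gx_0$ in $X$ to conclude that each $V_i$ is categorical. The only cosmetic difference is that the paper packages the last step as an explicit local section of $q_1=\ev_1$ (concatenating the lifted path with a contracting path of $Gx_0$), whereas you phrase it as a nullhomotopy of the inclusion $V_i\hookrightarrow X$; these are equivalent.
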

	\begin{proof}
		Let $\{ U_i \}_{0 \leq i \leq n}$ be a categorical cover for $\cat(\rho_X)$ and define, for each $U_i \subset X$ a homotopy $H_i : U_i \times I \rightarrow X/G$ such that $$ H_i(x,0) = \rho_X(x_0) \qquad \mbox{and} \qquad H_i(x,1) = \rho_X(x). $$ Given that $\rho_X$ is a fibration, we can lift $H_i$ to a homotopy $K_i : U_i \times I \rightarrow X$ satisfying $$ K_i(x,1) = x \qquad \mbox{and} \qquad \rho_X \circ K_i = H_i. $$ By the hypothesis of contractibility of the orbit of the basepoint, there is a continuous map $\theta: G x_0 \rightarrow P_{\ast} X $ such that $\theta(x)(1) = x$ for all $x \in Gx_0$. Consequently, it is possible to define a section for the LS-cat fibration $q_1$ over $U_i$ as the concatenation $$ s(x) = \theta(K_i(x,0)) \ast H_i(x,\cdot).$$ The claim thus follows from the equality $\cat(\rho_X) = \cat^{G,\infty}(X)$ in $(1)$ of Theorem \ref{EffOrbFibr}.
	\end{proof}
	
	Naturally, the previous corollary is of particular interest in situations where we can assume the coincidence between LS-category and topological complexity, so the computation of effective topological complexity would be derived from the (generally easier) task of knowing the classic LS-category.   
	
	\begin{corollary} \label{FreeCatEqTC}
		Let $G$ be a finite group and $X$ a free $G$-space such that $\cat(X) = \tc(X)$. Then $\tc^{G, \infty}(X) = \cat(X)$.
	\end{corollary}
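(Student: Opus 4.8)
The plan is to sandwich $\tc^{G,\infty}(X)$ between $\cat(X)$ and $\tc(X)$; since these two coincide by hypothesis, this forces $\tc^{G,\infty}(X)=\cat(X)$. In fact the hypothesis $\cat(X)=\tc(X)$ enters only at the very end: for an \emph{arbitrary} finite free action one gets $\cat(X)\le \tc^{G,\infty}(X)\le \tc(X)$, and the corollary is just the special case where the outer terms agree. So the real content is the two-sided estimate, and almost all of it has already been done in the previous results.

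The first thing I would set up is that, as $G$ is finite and acts freely and cellularly on the CW-complex $X$, the action is properly discontinuous, so the orbit projection $\rho_X\colon X\to X/G$ is a covering map, and in particular a fibration. This places us in the setting of Theorem \ref{EffOrbFibr} and, crucially, of Corollary \ref{CorBasePoint}.

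For the upper bound I would use that, by Lemma \ref{LemmaMonotEffTC}, the sequence $\tc^{G,1}(X)\ge \tc^{G,2}(X)\ge\cdots$ is non-increasing, so $\tc^{G,\infty}(X)\le \tc^{G,1}(X)=\secat(\pi_1)=\tc(X)$ (here $\PP_1(X)=PX$ and $\pi_1$ is just the ordinary path-space fibration). For the lower bound I would check that the hypothesis of Corollary \ref{CorBasePoint} is automatically satisfied: since the action is free, the base-point orbit $Gx_0$ consists of exactly $|G|$ distinct points, hence is a finite discrete subspace of the connected space $X$; choosing a path in $X$ from each of these points to $x_0$ and assembling them (continuously, since each $\{gx_0\}$ is clopen in $Gx_0$) gives a nullhomotopy of the inclusion $Gx_0\hookrightarrow X$, i.e. $Gx_0$ is contractible in $X$. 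Corollary \ref{CorBasePoint} then yields $\cat^{G,\infty}(X)=\cat(X)$, and the leftmost inequality in the chain of Theorem \ref{CatTCEffIneq} gives $\cat(X)=\cat^{G,\infty}(X)\le \tc^{G,\infty}(X)$. Combining the two bounds with the hypothesis $\cat(X)=\tc(X)$ finishes the argument.

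There is no serious obstacle here; the proof is essentially a bookkeeping assembly of Lemma \ref{LemmaMonotEffTC}, Theorem \ref{CatTCEffIneq} and Corollary \ref{CorBasePoint}. The closest thing to a genuine point is making sure the hypotheses line up: the (standard) fact that a finite free action on a Hausdorff CW-complex has covering orbit map, so that the fibration results apply, and the (elementary) observation that a finite subset of a path-connected space is contractible in it, which is exactly what makes the base-point orbit contractible and hence Corollary \ref{CorBasePoint} usable. One should also note in passing that connectedness of $X$ is what guarantees the existence of the paths used to contract $Gx_0$.
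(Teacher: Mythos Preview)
Your argument is correct and follows essentially the same route as the paper: sandwich $\tc^{G,\infty}(X)$ between $\cat^{G,\infty}(X)$ and $\tc(X)$, then identify $\cat^{G,\infty}(X)=\cat(X)$ via Corollary~\ref{CorBasePoint}. Your version is in fact more careful than the paper's, since you explicitly verify that the free finite action makes $\rho_X$ a covering (hence a fibration) and that the finite orbit $Gx_0$ is contractible in the path-connected space $X$, whereas the paper leaves these checks implicit.
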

	\begin{proof}
		The chain of inequalities in Theorem \ref{CatTCEffIneq} shows that $\cat^{G,\infty}(X) \leq \tc^{G, \infty}(X) \leq \tc(X)$. By Corollary \ref{EffOrbFibr} above we know that $\cat^{G, \infty}(X) = \cat(X)$, and the claim follows from the hypothesis. 
	\end{proof}

	We will close this subsection by mentioning some interesting examples of the previous corollaries.
	
	\begin{example}
		\begin{enumerate}[(1)]
			
			\item Let $G$ be a connected Lie group. In \cite[Lemma 8.2]{FarberIns04}, Farber proved the equality $\tc(G) = \cat(G)$. Therefore, if there exists a finite non-trivial discrete subgroup $H \leq G$, Corollary \ref{FreeCatEqTC} implies that $$ \tc^{H,\infty}(G) = \cat(G).$$ Recall that, for example, if $G$ is a non-nilpotent and simply connected group (such as groups of upper triangular matrices with diagonal terms equal to one) there always exists a non-trivial finite subgroup $H$. 
			
			\item Generalizing Farber's result, Lupton and Scherer demonstrated in \cite[Theorem 1]{Lupton13} that, if $X$ is a connected CW $H$-space, then $\tc(X) = \cat(X)$. Consequently, if $G$ is a finite group acting freely on $X$, Corollary \ref{FreeCatEqTC} applies and $$ \tc^{G,\infty}(X) = \cat(X). $$
			
			\item A particularly simple example comes from free products on spheres. Let $G$ be a finite group acting freely on the $k$-dimensional torus 
			$$ T^k = \underbrace{S^1 \times \cdots \times S^1}_{k}.$$ Then we have $$\tc^{G, \infty}(T^k) = k.$$ 
			More generally, we can consider products of odd dimensional spheres, so if $G$ acts freely on the product $\underbrace{S^{2n+1} \times \cdots \times S^{2n+1}}_{k}$ we obtain $$\tc^{G,\infty}(\underbrace{S^{2n+1} \times \cdots \times S^{2n+1}}_{k}) = k$$
			%Indeed, it is well-known that any finite group $G$ acts freely on a (big enough) product of spheres (in fact, every finite group acts smoothly, freely, and homologically trivially on a product of spheres, see \cite{Davis16}).
		
		\end{enumerate}
	\end{example}

		\section{\centering Cohomological conditions for non-vanishing of $\tc^{G,2}(X)$} \label{Section6}
		
		One of the significant open problems suggested in the original article of B\l{}aszczyk and Kaluba \cite{BlKa2} concerned the determination of the kind of sequences that could arise as sequences of effective topological complexities. The problem is too broad and general, and it will most certainly require a specific in-depth inquiry on the matter, which goes beyond the scope of the present article. However, we will make a first contribution to the problem.
		
		In this section we will study some cohomological conditions to determine whether or not the effective topological complexity at stage two vanishes. The set stage is not arbitrary by any means: such cohomological conditions are examined over the saturated diagonal, and we will make use of an homotopy equivalence between $\daleth(X)$ and the stage 2 broken path space $\PP_2(X)$ to infer the aforementioned non-vanishing condition. \medskip
		
		Let us start by noticing that the saturated diagonal $\daleth(X)$ can be easily represented as $$ \daleth(X) = \{ (gx,x) \mid g \in G, x \in X \}. $$ The inclusion $\{ (gx,x) \mid g \in G, x \in X \} \subset \daleth(X)$ is obvious, while for any pair $(g_1x, g_2x) \in \daleth(X)$ it is possible to define $$ (\overline{g}y, y) \in \{ (gx,x) \mid g \in G, x \in X \}, \qquad \mbox{ for } \overline{g} = g_1 g_2^{-1} \mbox{ and }  y = g_2x. $$ This, in turn, informs us that  we can decompose $\daleth(X)$ as the union of ``slices" of the saturated diagonal, i.e.
		$$ \daleth(X) = \displaystyle{\bigcup_{g \in G}} \daleth_g(X), $$ where, for each $g \in G$, we set $$\daleth_g(X) := \{ (gx,x) \mid x \in X \}. $$ This decomposition will be quite useful for the rest of our arguments throughout this section. However, before proceeding further, let us describe the homotopy equivalence between $\daleth(X)$ and the broken path space $\PP_2(X)$.
		
		\begin{lemma}\label{HomotP2Daleth}
			Let $X$ be a $G$ space. There is a homotopy equivalence between $\daleth(X)$ and $\PP_2(X)$.
		\end{lemma}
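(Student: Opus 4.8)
The plan is to realise $\daleth(X)$ as a strong deformation retract of $\PP_2(X)$ by writing down explicit maps in both directions. First I would define $\mu \colon \PP_2(X) \to \daleth(X)$ by $\mu(\gamma_1,\gamma_2) = (\gamma_1(1),\gamma_2(0))$. This is well defined: the defining condition of $\PP_2(X)$ gives $G\gamma_1(1) = G\gamma_2(0)$, so $\gamma_1(1) = g\gamma_2(0)$ for some $g \in G$, whence $(\gamma_1(1),\gamma_2(0)) = (g\gamma_2(0),\gamma_2(0)) \in \daleth(X)$; and it is continuous because it is the restriction of the endpoint-evaluation map $(PX)^2 \to X\times X$. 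In the reverse direction I would take $\iota \colon \daleth(X) \to \PP_2(X)$, $\iota(a,b) = (c_a,c_b)$; here $(a,b)\in\daleth(X)$ means $[a]=[b]$ in $X/G$, which is exactly the condition $Gc_a(1) = Gc_b(0)$, and $\iota$ is continuous since $x \mapsto c_x$ is a (continuous) embedding $X \hookrightarrow PX$ applied in each coordinate.

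The identity $\mu\circ\iota = \id_{\daleth(X)}$ is immediate from the formulas. The substantive step is the homotopy $\iota\circ\mu \simeq \id_{\PP_2(X)}$, for which I would use the reparametrisation homotopy $H\colon \PP_2(X)\times I \to \PP_2(X)$, $H_s(\gamma_1,\gamma_2) = (\gamma_1^s,\gamma_2^s)$ with $\gamma_1^s(t) = \gamma_1\big(s+(1-s)t\big)$ and $\gamma_2^s(t) = \gamma_2\big((1-s)t\big)$. Then $H_0 = \id$ and $H_1(\gamma_1,\gamma_2) = (c_{\gamma_1(1)},c_{\gamma_2(0)}) = \iota(\mu(\gamma_1,\gamma_2))$, and $H_s$ fixes the subspace $\iota(\daleth(X))$ of pairs of constant paths pointwise, so this will be a strong deformation retraction. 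The one thing that genuinely needs checking is that $H_s$ never leaves $\PP_2(X)$: since $\gamma_1^s(1) = \gamma_1(1)$ and $\gamma_2^s(0) = \gamma_2(0)$ for every $s\in I$, the constraint $G\gamma_1^s(1) = G\gamma_2^s(0)$ holds throughout; continuity of $H$ follows from the exponential law, using that $I$ is locally compact Hausdorff so that the reparametrisation maps $PX\times I \to PX$ are continuous.

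Assembling these facts, $\mu$ and $\iota$ are mutually homotopy inverse and $\PP_2(X)$ deformation retracts onto a copy of $\daleth(X)$. I do not expect a real obstacle here: the only point requiring a word of care is that the action need not be free, so the group element $g$ with $\gamma_1(1) = g\gamma_2(0)$ is not canonical — but $\mu$ records only the pair of points, not $g$, and the homotopy $H$ is defined purely by reparametrising paths, so nothing depends on a choice of $g$.
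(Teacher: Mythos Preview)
Your proof is correct and follows essentially the same approach as the paper: the paper defines the same pair of maps (constant-path inclusion and midpoint evaluation) and the same reparametrisation homotopy $\gamma_1^t(s)=\gamma_1(t(1-s)+s)$, $\gamma_2^t(s)=\gamma_2(s(1-t))$ that you use. Your write-up is actually a bit more careful than the paper's, in that you verify well-definedness, continuity, and that $H_s$ stays inside $\PP_2(X)$, and you observe the extra fact that this is a strong deformation retraction.
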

		\begin{proof}
			Start by noticing that there is an obvious inclusion $$ \iota \colon \daleth(X) \hookrightarrow \PP_2(X) \qquad \iota(gx,x) = (c_{gx},c_{x}). $$ Now, consider a map $f \colon \PP_2(X) \rightarrow \daleth(X)$ defined as $$ f(\gamma_1, \gamma_2) = (\gamma_1(1), \gamma_2(0)).$$ It is immediate to see that the composition $f \circ \iota$ corresponds with $\id_{\daleth(X)}$. For the other composition, we obtain $$ (\gamma_1, \gamma_2) = \iota(\gamma_1(1), \gamma_2(0)) = (c_{\gamma_1(1)}, c_{\gamma_2(0)}). $$ 
			
			Define the map $H \colon \PP_2(X) \times I \rightarrow \PP_2(X)$ by 
			$$ H((\gamma_1, \gamma_2), 0) = (\gamma_1, \gamma_2), \qquad H((\gamma_1, \gamma_2), 1) = (c_{\gamma_1(1)}, c_{\gamma_2(0)}), $$ $$ H((\gamma_1, \gamma_2),t) = (\gamma_1^{t}, \gamma_2^t) \qquad \forall 0 < t < 1 $$ where for all $0 \leq s \leq 1$ we put $$ \gamma_1^t(s)= \gamma_1(t(1-s)+s), \qquad \gamma_2^t(s) = \gamma_2(s(1-t)). $$ One observes that said map defines a homotopy between $\iota \circ f$ and $\id_{\PP_2(X)}$ and, as such, we have the homotopy equivalence $\daleth(X) \simeq \PP_2(X)$.
		\end{proof}
		
	Notice that the above lemma generalizes the homotopy equivalence between $\PP_2(X)$ and $\daleth(X)$ noted by Cadavid-Aguilar and González in \cite{CadavidGonzalez21} for finite free actions into arbitrary group actions.
		
	Throughout the rest of this section, assume that $G$ is a finite group, and $X$ is a compact $G$-ANR. By \cite[Theorem 3.15]{LubawskiMarzan14} this implies, in turn, that the saturated diagonal $\daleth(X)$ becomes a $(G \times G)$-ANR, and we can apply the cohomological Mayer-Vietoris sequence for general subsets that are retractions of open subsets (check, for example, \cite[Pag. 150]{Hatcher}). Also recall that by the \emph{cohomological dimension} of a space $X$ we mean the largest integer $n \geq 0$ such that there exists a local coefficient system $M$ satisfying $H^n(X;M) \neq 0$.
		
		\begin{lemma}\label{LemDimInv}
			Let $X$ be a $G$-CW complex such that $\cd(X^H) \leq \cd(X)$ for all non-trivial subgroups $H \leqslant G$. Then, given any list $L$ of non-trivial subgroups of $G$, we have $$ \cd\displaystyle{ \left(\bigcup_{H \in L} X^H \right)} < \cd(X) + |L| - 1. $$
		\end{lemma}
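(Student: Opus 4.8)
The plan is to argue by induction on $n=|L|$, feeding into the inductive step the ordinary two-term Mayer--Vietoris sequence in cohomology with an arbitrary local coefficient system --- which, as recalled just before the statement, is available for the subspaces involved here (they are subcomplexes of a $G$-CW complex, after refining the cell structure if necessary; in the $(G\times G)$-ANR incarnation in which the lemma is later applied, they are retracts of open neighbourhoods).

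First I would isolate the two purely group-theoretic facts that keep the induction inside a tractable class of spaces. For non-trivial subgroups $H,H'\leqslant G$ one has $X^{H}\cap X^{H'}=X^{\langle H,H'\rangle}$, and $\langle H,H'\rangle\supseteq H$ is again non-trivial; iterating, $\bigcap_i X^{H_i}=X^{H}$ where $H$ is the subgroup generated by all the $H_i$. Hence the class of finite unions of fixed-point sets of non-trivial subgroups is closed under the operation that appears in Mayer--Vietoris: if $Y=\bigcup_{H\in L'}X^{H}$ is such a union of $|L'|$ pieces and $H_0\leqslant G$ is another non-trivial subgroup, then distributivity gives $Y\cap X^{H_0}=\bigcup_{H\in L'}X^{\langle H,H_0\rangle}$, once more a union of $|L'|$ fixed-point sets of non-trivial subgroups. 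This is precisely what lets the intersection term feed the inductive hypothesis.

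The base case $n=1$ is exactly the standing hypothesis $\cd(X^{H})\leqslant\cd(X)$. For the step, write $L=\{H_1,\dots,H_n\}$, set $Y=\bigcup_{i<n}X^{H_i}$ and $A=X^{H_n}$, so that $\bigcup_{H\in L}X^{H}=Y\cup A$ and $Y\cap A=\bigcup_{i<n}X^{\langle H_i,H_n\rangle}$ is a union of $n-1$ fixed-point sets of non-trivial subgroups. The inductive hypothesis yields $\cd(Y)\leqslant\cd(X)+n-2$ and $\cd(Y\cap A)\leqslant\cd(X)+n-2$, while $\cd(A)\leqslant\cd(X)$ by assumption. Then for any local system $M$ and any $q>\cd(X)+n-1$ the Mayer--Vietoris segment
\[
H^{q-1}(Y\cap A;M)\to H^{q}(Y\cup A;M)\to H^{q}(Y;M)\oplus H^{q}(A;M)
\]
has both outer terms zero, so $H^{q}(Y\cup A;M)=0$; therefore $\cd(Y\cup A)\leqslant\cd(X)+n-1$ and the induction closes. (I would thus record the conclusion as the non-strict bound $\cd\!\big(\bigcup_{H\in L}X^{H}\big)\leqslant\cd(X)+|L|-1$, which is exactly what is needed to run the induction in Theorem~\ref{PropDimDiag1}.)

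I do not anticipate a real obstacle: the substance is the closure property above plus the arithmetic $\max\{\cd(X)+n-2,\ \cd(X),\ (\cd(X)+n-2)+1\}=\cd(X)+n-1$. The single point needing care is legitimising Mayer--Vietoris with local coefficients on the (possibly non-open) subspaces $X^{H}$ and their intersections: for a finite group acting cellularly one passes to a structure in which each $X^{H}$ is a subcomplex, so the relevant triads are excisive and $M$ restricts along the inclusions; in the ANR setting one invokes instead, as cited before the statement, that $X^{H}$ and all the intersections arising in the argument are neighbourhood retracts. As an aside, the same bound drops out in one stroke from the Mayer--Vietoris spectral sequence of the cover $\{X^{H}\}_{H\in L}$: every non-empty multiple intersection is some $X^{\langle\,\cdots\,\rangle}$ of a non-trivial subgroup and hence has $\cd\leqslant\cd(X)$, while the nerve of the cover has dimension $\leqslant|L|-1$, which forces $\cd\!\big(\bigcup_{H\in L}X^{H}\big)\leqslant\cd(X)+|L|-1$.
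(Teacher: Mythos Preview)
Your argument is correct and follows essentially the same route as the paper: induction on $|L|$, splitting off one fixed-point set, identifying the intersection via $X^{H}\cap X^{H'}=X^{\langle H,H'\rangle}$, and closing with Mayer--Vietoris. You also correctly observe that the bound one actually obtains (and the one used downstream) is the non-strict $\cd\!\big(\bigcup_{H\in L}X^{H}\big)\leqslant\cd(X)+|L|-1$; the strict ``$<$'' in the displayed statement is a slip, since already the base case $|L|=1$ only yields $\leqslant$.
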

		
		\begin{proof}
			We will proceed by induction. Consider the base case $|L| = 1$, then $L$ consists of only one non-trivial subgroup $H$ of $G$ and hence $\cd(X^H) \leq \cd(X)$ by the initial hypothesis. 
			
			Now assume that the claim is satisfied for any list of subgroups of cardinality $n-1$, and define $L:=\{K_1, \cdots, K_{n-1}\} \cup \{H\}$ with $H, K_i \leqslant G$ for all $1 \leq i \leq n-1$. Define the sets $$ A := \displaystyle{\bigcup_{K_i \in L}}X^{K_i} \qquad \wedge \qquad B := X^H. $$ Notice that the intersection corresponds to the following union of fixed point sets $$ A \cap B = \displaystyle{\left(\bigcup_{K_i \in L}X^{K_i} \right)} \cap X^H = \displaystyle{\bigcup_{K_i \in L}(X^{K_i} \cap X^H)} = \displaystyle{\bigcup_{K_i \in L} } X.^{\langle K_i, H \rangle} $$ By the induction hypothesis, we have the inequalities $$\cd(A) < \cd(X) + n-2 \qquad \cd(A\cap B) < \cd(X) + n-2,$$ while we also have the inequality $\cd(B) < \cd(X)$ as a consequence of the initial hypothesis. Applying the Mayer-Vietoris sequence to the spaces just defined, and putting $d := \cd(X)+n-2 $, we obtain a sequence
			
			\vspace{0.3cm}	
			
			\adjustbox{scale=0.9, center}{%		
				\begin{tikzcd}
					\cdots \rar   & H^{d}(A\cup B;M) \rar & H^{d}(A;M) \oplus H^{d}(B;M) \rar & H^{d}(A\cap B;M)
					\ar[out=0, in=180, looseness=2, overlay]{dll}   & \\
					& H^{d+1}(A\cup B;M) \rar & 0 \rar & 0
			\end{tikzcd}} 
			where $M$ is an arbitrary (possibly twisted) coefficient system and, by the cohomological dimensional bounds stated above, we have that $H^{d+1}(A\cup B;M) = 0$, and thus we obtain that $$ \cd \displaystyle{\left(\bigcup_{K_i \in L} X^{K_i} \right)} \cup X^H) < \cd(X) + |L| - 1. $$ \end{proof}
		
		The lemma above is instrumental, both in the result itself and in the argument of the proof, of the following bound of the cohomological dimension of the saturated diagonal. 
		
		In the same spirit as before, for any list of elements $L \subseteq G$, define the relative saturated diagonal with respect to $L$ as $$ \daleth_L(X)  = \displaystyle{\bigcup_{g_i \in L}} \daleth_{g_i}(X).$$   
		
		\begin{theorem} \label{PropDimDiag1}
			Let $X$ be a $G$-CW complex such that $\cd(X^H) \leq \cd(X)$ for all non-trivial subgroup $H \leqslant G$. Then, for any $L$ list of elements of $G$, $$\cd(\daleth_L(X)) \leq \cd(X) + |L| - 1.$$ In particular we have that $$ \cd(\daleth(X)) \leq \cd(X) + |G| - 1. $$
		\end{theorem}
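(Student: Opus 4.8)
The plan is to run an induction on $|L|$ exactly parallel to the proof of Lemma~\ref{LemDimInv}, with the slices $\daleth_g(X)$ playing the role of the fixed–point sets $X^H$ and with Lemma~\ref{LemDimInv} itself supplying the estimate for the intersection term at each inductive stage; the homological input throughout is the cohomological Mayer–Vietoris sequence and the dimension shift $\cd(A\cup B)\le\max\{\cd A,\cd B,\cd(A\cap B)+1\}$ that it produces.

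First I would establish two elementary geometric facts. For each $g\in G$ the assignment $x\mapsto(gx,x)$ is a homeomorphism $X\xrightarrow{\cong}\daleth_g(X)$, so $\cd(\daleth_g(X))=\cd(X)$; this handles the base case $|L|=1$. Next, for $g\neq h$ a point of $\daleth_g(X)\cap\daleth_h(X)$ is a pair $(gx,x)=(hy,y)$, which forces $y=x$ and $gx=hx$, i.e.\ $x\in X^{\langle g^{-1}h\rangle}$; since $g\neq h$ the subgroup $\langle g^{-1}h\rangle$ is non-trivial and $\daleth_g(X)\cap\daleth_h(X)\cong X^{\langle g^{-1}h\rangle}$. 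Distributing intersection over union, for any sublist $L'\subseteq L$ and $h\in L\setminus L'$ this gives
\[
	\daleth_{L'}(X)\cap\daleth_h(X)=\bigcup_{g\in L'}X^{\langle g^{-1}h\rangle},
\]
a union of at most $|L'|$ fixed–point sets of non-trivial subgroups — precisely the kind of space Lemma~\ref{LemDimInv} controls.

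For the inductive step write $L=L'\cup\{h\}$ with $|L'|=n-1$ and $h\notin L'$ (repetitions in $L$ may be discarded at the outset, which only improves the bound), and put $A=\daleth_{L'}(X)$, $B=\daleth_h(X)$, so that $A\cup B=\daleth_L(X)$. As in Lemma~\ref{LemDimInv}, the standing assumption that $X$ is a compact $G$-ANR guarantees — via \cite[Theorem~3.15]{LubawskiMarzan14}, together with the facts that fixed–point sets of finite group actions on ANRs are ANRs and that a finite union of closed ANR subspaces with ANR intersections is again an ANR — that $A$, $B$ and $A\cap B$ are retracts of open neighbourhoods in $X\times X$, so the cohomological Mayer–Vietoris sequence with arbitrary local coefficients $M$ yields, for every $m$, an exact stretch
\[
	H^{m-1}(A\cap B;M)\longrightarrow H^{m}(A\cup B;M)\longrightarrow H^{m}(A;M)\oplus H^{m}(B;M).
\]
Now $\cd(B)=\cd(X)$ by the base case; $\cd(A)\le\cd(X)+(n-1)-1$ by the inductive hypothesis; and $\cd(A\cap B)\le\cd(X)+(n-1)-1$ by Lemma~\ref{LemDimInv} applied to the list $\{\langle g^{-1}h\rangle:g\in L'\}$, whose cardinality is at most $n-1$. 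Hence for $m>\cd(X)+n-1$ all three flanking groups vanish, forcing $H^{m}(\daleth_L(X);M)=0$; since $M$ was arbitrary this is exactly $\cd(\daleth_L(X))\le\cd(X)+n-1=\cd(X)+|L|-1$, completing the induction, and the final assertion is the special case $L=G$.

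The main obstacle I expect is not the homological bookkeeping — the Mayer–Vietoris dimension shift is routine once one has the right estimates — but rather, first, pinning down that the pairwise intersections of slices are honestly fixed–point sets of non-trivial subgroups, so that Lemma~\ref{LemDimInv} is genuinely the input needed, and, second, on the technical side, justifying that the subspaces $\daleth_g(X)\subseteq X\times X$ and their finite unions (which are not, a priori, subcomplexes) are neighbourhood retracts so that Mayer–Vietoris with local coefficients is legitimately available; this is precisely why the compact $G$-ANR hypothesis and \cite[Theorem~3.15]{LubawskiMarzan14} are invoked.
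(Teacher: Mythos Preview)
Your proposal is correct and follows essentially the same approach as the paper: both argue by induction on $|L|$, split $\daleth_L(X)=\daleth_{L'}(X)\cup\daleth_h(X)$, identify the intersection as a union of fixed-point sets $X^{\langle g^{-1}h\rangle}$ over $g\in L'$, invoke Lemma~\ref{LemDimInv} for that intersection, and conclude via Mayer--Vietoris. Your write-up is in fact slightly more explicit than the paper's about the ANR justification and about discarding repetitions in $L$, but the argument is the same.
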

		\begin{proof}
			The idea of this proof builds upon the argument used in the previous lemma, and we will proceed, once again, by induction. First assume we consider lists consisting on only one element. Then $\daleth_L(X)$ is just homeomorphic to $X$ and, as such, $\cd(\daleth_L(X)) = \cd(X)$.   
			
			Now, let us assume that the induction hypothesis is satisfied for any list of elements of $G$ of length $n-1$. Define an arbitrary list of such length $L' = \{g_1, \cdots, g_{n-1} \}$, and let $L =  L' \cup \{r\}$ for $G' = \{g_1, \cdots, g_{n-1} \}$ a subgroup of order $n-1$ and $r$ an element of $G$ not included in $L'$. Consider the decomposition $$ \daleth_L(X) = \displaystyle{\bigcup_{k_i \in L}} \daleth_{k_i}(X) = \left( \displaystyle{\bigcup_{g_i \in L'}} \daleth_{g_i}(X) \right)  \cup \daleth_{r}(X).  $$	Define now the sets $$ A := \displaystyle{\bigcup_{g_i \in L'}} \daleth_{g_i}(X)  \qquad B := \daleth_r(X). $$ The intersection of these two subsets corresponds with the following set $$ A \cap B = \left(\displaystyle{\bigcup_{g_i \in L'}} \daleth_{g_i}(X) \right) \cap \daleth_r(X) = \displaystyle{\bigcup_{g_i \in L'}} \left( \daleth_{g_i}(X) \cap \daleth_r(X) \right). $$ For each $g_i \in L'$, the intersection $ \daleth_{g_i}(X) \cap \daleth_r(X)$ is equivalent to the set $$ \{x \in X \mid (g_i x,x) = (rx,x)\},$$ which implies $r^{-1}g_i x = x$. Thus we can identify the intersection $$\daleth_{g_i}(X) \cap \daleth_r(X) \cong X^{\langle r^{-1} g_i \rangle }$$ and, consequentially, the intersection above can be reformulated as an union of invariant sets of the form $$ A \cap B = \displaystyle{\bigcup_{g_i \in L'}} X^{\langle r^{-1}g_i \rangle} . $$ Define $M$ as the collection of non-trivial subgroups of $G$ $$M := \{ \langle r^{-1}g_1 \rangle, \cdots, \langle r^{-1}g_{n-1} \rangle  \}. $$ By Lemma \ref{LemDimInv} we know that $$\cd(A\cap B) < \cd(X) + n-2.$$ By the induction hypothesis one observes $$ \cd(A) < \cd(X) + n-2$$ and clearly $\cd(B) = \cd(X)$. Applying now the Mayer-Vietoris sequence as in Lemma \ref{LemDimInv} yields the exact sequence 
			\vspace{0.3cm}	
			
			\adjustbox{scale=0.9, center}{%		
				\begin{tikzcd}
					\cdots \rar   & H^{d}(A\cup B;M) \rar & H^{d}(A;M) \oplus H^{d}(B;M) \rar & H^{d}(A\cap B;M)
					\ar[out=0, in=180, looseness=2, overlay]{dll}   & \\
					& H^{d+1}(A\cup B;M) \rar & 0 \rar & 0
			\end{tikzcd} } \vspace{0.3cm}
			and given the cohomological dimensional bounds stated above, we obtain $$\cd(A\cup B) = \cd(\daleth_L(X)) \leq \cd(X) + |L| - 1. $$ which gives us the desired result. \end{proof}
		
		As a consequence of the previous result, we can deduce a cohomological condition on the base space $X$ for non-vanishing second stage effective topological complexity, reflected in the following corollary.		
		
		\begin{corollary}\label{TC2great0}
			Under the assumptions of Theorem \ref{PropDimDiag1}, we have that, if $|G| \leq \cd(X)$, then $$\tc^{G,2}(X) > 0.$$ 
		\end{corollary}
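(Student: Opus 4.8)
The plan is to argue by contradiction. Suppose $\tc^{G,2}(X)=0$; since $\tc^{G,2}(X)=\secat(\pi_2)$, this means the single open set $X\times X$ admits a $(G,2)$-motion planner, i.e. there is a continuous map $s\colon X\times X\to\PP_2(X)$ with $\pi_2\circ s\simeq\id_{X\times X}$. Then $s^{\ast}\circ\pi_2^{\ast}=\id$ on $H^{\ast}(X\times X;M)$ for every local coefficient system $M$, so $\pi_2^{\ast}$ is a split monomorphism and therefore $\cd(X\times X)\le\cd(\PP_2(X))$. By Lemma~\ref{HomotP2Daleth} we have $\PP_2(X)\simeq\daleth(X)$, hence $\cd(\PP_2(X))=\cd(\daleth(X))$, and Theorem~\ref{PropDimDiag1} bounds the latter, giving $\cd(X\times X)\le\cd(\daleth(X))\le\cd(X)+|G|-1$. (Equivalently, one may use the homotopy equivalence of Lemma~\ref{HomotP2Daleth} to identify $\tc^{G,2}(X)$ with $\secat(\delta_X)$ and run the same domination argument directly for $\delta_X\colon\daleth(X)\hookrightarrow X\times X$.)

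The second ingredient is the complementary lower bound $\cd(X\times X)\ge 2\,\cd(X)$. Writing $n:=\cd(X)$, I would fix a local system $M_0$ on $X$ realizing this dimension, i.e. $H^n(X;M_0)\neq 0$ while $H^j(X;-)=0$ for all $j>n$ and all coefficients. Since $H^{n+1}(X;M_0)=0$, the universal coefficient theorem for the change of coefficients $\ZZ\to k$ shows $H^n(X;M_0\otimes_\ZZ k)\cong H^n(X;M_0)\otimes_\ZZ k$, which is nonzero for a suitable field $k$ (the rationals if the abelian group $H^n(X;M_0)$ is not torsion, or $\FF_p$ for a prime $p$ dividing the order of one of its elements otherwise). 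Thus $\cd_k(X)=n$, and over the field $k$ the Künneth theorem carries no $\mathrm{Tor}$ contribution, so $H^{2n}\!\big(X\times X;(M_0\otimes_\ZZ k)^{\boxtimes 2}\big)\cong H^n(X;M_0\otimes_\ZZ k)^{\otimes_k 2}\neq 0$. Hence $\cd(X\times X)\ge\cd_k(X\times X)\ge 2n$.

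Combining the two estimates yields $2\,\cd(X)\le\cd(X)+|G|-1$, that is $\cd(X)\le|G|-1$, contradicting the hypothesis $|G|\le\cd(X)$; therefore $\tc^{G,2}(X)\neq 0$, i.e. $\tc^{G,2}(X)>0$. The step I expect to require the most care is the product inequality $\cd(X\times X)\ge 2\,\cd(X)$: a naive Künneth argument over $\ZZ$ is not conclusive because a tensor product of two nonzero abelian groups may vanish, so one genuinely needs the reduction to a field of coefficients above (alternatively, one can use that for a finitely generated abelian group $A\neq 0$ one always has $A\otimes_\ZZ A\neq 0$, together with a reduction of $M_0$ to a finitely generated coefficient module, using that cohomology commutes with filtered colimits of coefficients). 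Everything else—the splitting of $\pi_2^{\ast}$, the homotopy equivalence $\PP_2(X)\simeq\daleth(X)$, and the bound on $\cd(\daleth(X))$—is immediate from Lemma~\ref{HomotP2Daleth} and Theorem~\ref{PropDimDiag1}.
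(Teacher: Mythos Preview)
Your argument is essentially the same as the paper's: both compare the bound $\cd(\daleth(X))\le\cd(X)+|G|-1$ from Theorem~\ref{PropDimDiag1} with the fact that $H^{2n}(X\times X)\neq 0$ for $n=\cd(X)$, and conclude via the existence of a nontrivial class in $\ker\pi_2^{\ast}$ (the paper phrases this via the cohomological lower bound for $\secat$, you phrase the contrapositive as a split injection). If anything, your treatment is more careful, since the paper simply asserts $H^{2n}(X\times X)\neq 0$ while you explain the passage to field coefficients needed to make the K\"unneth step go through.
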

		\begin{proof}
		 Consider the second effective fibration $\pi_2 \colon \PP_2 \rightarrow X \times X$. This map induces an homomorphism in cohomology $$ H^*(X \times X;M) \xrightarrow{\pi_2^*} H^*(\PP_2(X);M).$$ By the homotopy equivalence given in Lemma \ref{HomotP2Daleth}, this homomorphism can be seen as $$ H^*(X \times X;M) \rightarrow H^*(\daleth(X);M) $$ and, by Proposition \ref{PropDimDiag1}, $H^k(\daleth(X);M) = 0$ for any $k > \cd(X) + |G| - 1$. However, $H^{2n}(X \times X) \neq 0$, which implies the existence of at least one non-trivial element in $$\ker(H^*(X \times X;M) \rightarrow H^*(\daleth(X));M).$$ Thus, by (2) in Theorem \ref{TheoremSecatProperties}, we have $$ \secat(\pi_2) = \tc^{G,2}(X) > 0. $$  \end{proof}
		
		%\begin{note}
		%	AQUI CONDICIONES EN EL RETICULO DE SUBGRUPOS PARA ELIMINAR LA INFORMACION REITERATIVA
		%	\end{note}
	\begin{remark}
	In \cite[Definition 7.3]{CadavidGonzalez21} the authors introduced the notion of \emph{effective zero-divisors}. Namely, considering the inclusion of the saturated diagonal $\delta_X \colon \daleth \rightarrow X \times X$, we say that an effective zero-divisor is an element in the kernel of the induced map in cohomology $$\delta^*_X \colon H^*(X \times X;R) \rightarrow H^*(\daleth(X);R)$$ where cohomology is considered with arbitrary coefficients. As noted by Grant in \cite{GrantEquiv24}, it is implicit in \cite{CadavidGonzalez21} that, if $X$ is a free $G$-space with $G$ a finite group, we have the following lower bound $$ \tc^{G,2}(X) = \tc^{G, \infty}(X) \geq \mbox{nil} \ker (\delta^*_X \colon H^*(X \times X;R) \rightarrow H^*(\daleth(X);R)). $$  Our Corollary \ref{TC2great0} generalizes such lower bound (at stage two) to non necessarily free actions with prescribed cohomological dimensional bounds. 
	\end{remark} 
	
	\begin{example}
		Let us consider now the case of our space being a $n$-sphere (for $n > 1$) with a $\ZZ_2$-action by involution and codimension one fixed point set. Adopt the notation $\ZZ_2 = \{ e,g \}$, where $e$ acts as the identity element. As above, take the split saturated diagonal $\daleth(S^n)$ as the union of slices $$\daleth_e(S^n) \bigcup \daleth_g(S^n),$$
		 %where we have $$ \daleth_{k}(S^n) =  \{ (k x, x) \mid x \in X \mbox{ for } k = e,g \in \ZZ_2 \}. $$
		 Similarly as before, the intersection $\daleth_e(S^n) \bigcap \daleth_g(S^n)$ corresponds with the set of elements of $S^n$ such that $x = gx$, which is precisely $(S^n)^{\ZZ_2}$, the set of invariants by the action of $\ZZ_2$. 
		
		By Lemma \ref{HomotP2Daleth}, we know $\PP_2(S^n)$ is homotopically equivalent to the saturated diagonal $\daleth(S^n)$. Given that the dimension of the fixed point set $(S^n)^{\ZZ_2}$ is $n-1$ by the choice of the group action, we are under the hypothesis of Theorem \ref{PropDimDiag1} and thus, by Corollary \ref{TC2great0}, we have that $$\tc^{\ZZ_2,2}(S^n) > 0.$$
		
		The idea of how to find the $(\ZZ_2,2)$-motion planners is essentially analogous to \cite[Proposition 5.6]{BlKa2}. Let us recall it briefly. Define a homeomorphism $\tau: S^n \rightarrow S^n$ by $$ \tau(x_0, \cdots, x_n) := (-x_0, \cdots, -x_{\tilde{r}}, x_{\tilde{r} + 1}, \cdots, x_n) $$ The two-fold motion planners are given over the open covering $$ U_1 = \{(x,y) \in S^n \times S^n \mid y \neq -x\}, $$ $$ U_2 = \{(x,y) \in S^n \times S^n \mid y \neq -\tau(x)\}. $$ The motion planner on $U_1$ is just $s_1 = s'$, where $s'(x,y)$ denotes the shortest arc connecting two non-antipodal points $x$ and $y$. Meanwhile, the motion planner over $s_2 \colon U_2 \rightarrow \PP_2(S^n)$ is defined by putting
		$$s_2(x,y) := (c_x, s'(gx, \tau(x)) \ast s'(\tau(x),y)) \mbox{ for } (x,y) \in U_2$$.
		
		But under this choice of action, by Theorem \ref{EffSpheres}, we actually know that $$ \tc^{\ZZ_2,\infty}(S^n) = \tc^{\ZZ_2,3}(S^n) = 0. $$ Indeed, recall that a $(\ZZ_2,3)$-motion planner over $S^n$ can be defined, as shown in \cite[Proposition 5.7]{BlKa2}, by $$ s(x,y) := (c_x, s'(\Gamma(x)x, N)\ast s'(N, \Gamma(y)y, c_y)) $$ for any $x,y \in S^n$, where $N \in S^n$ denotes the north pole, and $\Gamma(x)$ is the trivial element of $\ZZ_2$ if $x \in S^n_{+}$, and its generator otherwise.  
	\end{example} \medskip
	
	Notice that in the previous example we have just realized the following basic sequence for involutions on arbitrary spheres $S^n$, $n > 1$, with codimension one fixed point set.
	
	\begin{proposition}\label{ExSeqSphInvol}
		Let it $\ZZ_2$ act on $S^n$ by involution, with fixed point set of codimension one. Then, the effective topological complexity sequence associated to $S^n$ is	
		$$ \tc^{\ZZ_2,k}(S^n)  = \begin{cases}
			2, & k=1,\\
			1, & k=2,\\
			0, & k\geq 3.
		\end{cases} $$
	\end{proposition}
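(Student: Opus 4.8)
The strategy is to handle the three regimes $k=1$, $k=2$ and $k\geq 3$ separately; all three reduce to facts already established in the paper. For $k=1$ one only unwinds definitions: $\tc^{G,1}(X)=\secat(\pi_1)$ with $\pi_1$ the ordinary path space fibration $PX\to X\times X$, so $\tc^{\ZZ_2,1}(S^n)=\tc(S^n)$, which equals $2$ for $n$ even by Farber's classical computation \cite{Farber08}; the $\ZZ_2$-action is irrelevant at this stage.

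For $k=2$ I would establish the two bounds separately. The inequality $\tc^{\ZZ_2,2}(S^n)\geq 1$ is immediate from Corollary \ref{TC2great0}: $\ZZ_2$ is finite, $S^n$ with the linear involution is a compact $\ZZ_2$-ANR and a $\ZZ_2$-CW complex, the unique nontrivial subgroup is $\ZZ_2$ itself with $(S^n)^{\ZZ_2}=S^{n-1}$ so $\cd((S^n)^{\ZZ_2})=n-1\leq n=\cd(S^n)$, and $|\ZZ_2|=2\leq n=\cd(S^n)$ since $n>1$; hence $\tc^{\ZZ_2,2}(S^n)>0$. For the reverse inequality $\tc^{\ZZ_2,2}(S^n)\leq 1$ I would use the explicit two-element open cover of $S^n\times S^n$ together with the $(\ZZ_2,2)$-motion planners exhibited in the Example preceding the statement (a variant of \cite[Proposition~5.6]{BlKa2}): on $U_1=\{(x,y):y\neq -x\}$ the shortest-arc planner $(c_x,s'(x,y))$, and on $U_2=\{(x,y):y\neq -\tau(x)\}$ the planner $(c_x,\,s'(gx,\tau(x))\ast s'(\tau(x),y))$ for a suitable auxiliary homeomorphism $\tau$ built from the reflection. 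What must be checked is that $U_1\cup U_2=S^n\times S^n$, that all pairs joined by shortest arcs are non-antipodal so those arcs are defined, and that the pieces (in particular the $x\mapsto gx$-type assignments) fit together continuously across the fixed sphere $S^{n-1}$, using that $g$ restricts to the identity there. Combining the two bounds yields $\tc^{\ZZ_2,2}(S^n)=1$.

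For $k\geq 3$ it suffices to produce a single global $(\ZZ_2,3)$-motion planner $S^n\times S^n\to\PP_3(S^n)$ and then apply the monotonicity $\tc^{G,k+1}(X)\leq\tc^{G,k}(X)$ of Lemma \ref{LemmaMonotEffTC} to force $\tc^{\ZZ_2,k}(S^n)=0$ for every $k\geq 3$. The planner is $s(x,y)=(c_x,\,s'(\Gamma(x)x,N)\ast s'(N,\Gamma(y)y),\,c_y)$, with $N$ the north pole and $\Gamma(x)\in\ZZ_2$ the element carrying $x$ into the closed upper hemisphere, as in \cite[Proposition~5.7]{BlKa2}; it is continuous because $x\mapsto\Gamma(x)x$ is continuous even though $\Gamma$ itself is not, and $\Gamma(x)x$ is never the south pole, so every arc is defined. (Independently, Theorem \ref{EffSpheres} gives $\tc^{\ZZ_2,\infty}(S^n)=0$ for a linear involution with $(n-1)$-dimensional fixed point set, which already forces the sequence to vanish eventually; the explicit $3$-stage planner pins the onset of vanishing at $k=3$.)

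The only step that is not essentially bookkeeping is the $k=2$ upper bound, and within it the verification that $U_1$ and $U_2$ genuinely cover $S^n\times S^n$ and that the planner over $U_2$ is continuous across the fixed sphere; everything else is a definitional identification ($k=1$), a check of the hypotheses of Corollary \ref{TC2great0} (the $k=2$ lower bound), or an appeal to an explicit $3$-stage planner together with monotonicity ($k\geq 3$).
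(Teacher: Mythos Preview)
Your proposal is correct and follows the same route as the paper, whose argument is contained in the Example immediately preceding the proposition: Corollary~\ref{TC2great0} for the $k=2$ lower bound, the explicit two-set cover with $(\ZZ_2,2)$-planners adapted from \cite[Proposition~5.6]{BlKa2} for the $k=2$ upper bound, and the global $(\ZZ_2,3)$-planner from \cite[Proposition~5.7]{BlKa2} together with monotonicity (equivalently Theorem~\ref{EffSpheres}) for $k\geq 3$. Your remark that $\tc^{\ZZ_2,1}(S^n)=\tc(S^n)=2$ requires $n$ even is in fact more careful than the paper, which does not make this parity assumption explicit in the statement.
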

	\bigskip
	
\bibliography{PropEffectiveTC.bib}{}
\bibliographystyle{plain}

\end{document}